\newcommand{\centered}[1]{\begin{tabular}{l}#1\end{tabular}}
\newcommand{\beq} {\begin{equation}}
\newcommand{\eeq} {\end{equation}}
\newcommand{\bdm} {\begin{displaymath}}
\newcommand{\edm} {\end{displaymath}}
\newcommand{\bit}{\begin{itemize}}
\newcommand{\eit}{\end{itemize}}
\newcommand{\bde}{\begin{description}}
\newcommand{\ede}{\end{description}}
\newcommand{\bce}{\begin{center}}
\newcommand{\ece}{\end{center}}
\newcommand{\ben} {\begin{enumerate}}
\newcommand{\een} {\end{enumerate}}
\newcommand{\bea} {\begin{eqnarray}}
\newcommand{\eea} {\end{eqnarray}}
\newcommand{\barr} {\begin{array}}
\newcommand{\earr} {\end{array}}
\newcommand{\bean} {\begin{eqnarray*}}
\newcommand{\eean} {\end{eqnarray*}}
\newcommand{\edoc} {
\graphicspath{{./figures/}}

\newsiamremark{remark}{Remark}
\newsiamremark{hypothesis}{Hypothesis}
\crefname{hypothesis}{Hypothesis}{Hypotheses}
\newsiamthm{claim}{Claim}

\newcommand{\TheTitle}{A Divergence-Free and $\Hsp(div)$-Conforming \\ Embedded-Hybridized DG Method for \\ the Incompressible Resistive MHD equations} 
\newcommand{\TheAuthors}{J. Chen, T.L. Horv\'{a}th, and T. Bui-Thanh}

\headers{A Div-Free and $H(div)$-Conforming EHDG Method for MHD}{\TheAuthors}

\title{{\TheTitle}\thanks{} }

%
%
%


\author{
Jau-Uei Chen\footnotemark[2],
Tam\'{a}s L. Horv\'{a}th\footnotemark[3],
\and Tan Bui-Thanh\footnotemark[2]\,\,\footnotemark[4]
}

\usepackage{amsopn}
\DeclareMathOperator{\diag}{diag}

\begin{document}

\maketitle

\renewcommand{\thefootnote}{\fnsymbol{footnote}}
\footnotetext[2]{Department of Aerospace Engineering and Engineering Mechanics, The University of Texas at Austin, Austin, TX 78712, USA (\email{chenju@itexas.edu}).
}
\footnotetext[3]{Department of Mathematics and Statistics, Oakland University, Rochester, MI 48309, USA (\email{thorvath@oakland.edu}).
}
\footnotetext[4]{Oden Institute for Computational Engineering and Sciences, The University of Texas at Austin, Austin, TX 78712, USA (\email{tanbui@oden.utexas.edu}).
}
\renewcommand{\thefootnote}{\arabic{footnote}}


\begin{abstract}
We present a divergence-free and $\Hsp\LRp{div}$-conforming hybridized discontinuous Galerkin (HDG) method and a computationally efficient variant called embedded-HDG (E-HDG)  for solving stationary incompressible viso-resistive magnetohydrodynamic (MHD) equations. 
The proposed E-HDG approach uses continuous facet unknowns for the vector-valued solutions (velocity and magnetic fields) while it uses discontinuous facet unknowns for the scalar variable (pressure and magnetic pressure). This choice of function spaces makes E-HDG computationally far more advantageous, due to the much smaller number of degrees of freedom, compared to the HDG counterpart. The benefit is even more significant for three-dimensional/high-order/fine mesh scenarios. On simplicial meshes, the proposed methods with a specific choice of approximation spaces are well-posed for linear(ized) MHD equations.  
For nonlinear MHD problems, we present a simple approach exploiting the proposed linear discretizations by using a Picard iteration.
The beauty of this approach is that  the divergence-free and $\Hsp\LRp{div}$-conforming properties of the velocity and magnetic fields are automatically carried over for 
nonlinear MHD equations.
We study the accuracy and convergence of our E-HDG method for both linear and nonlinear MHD cases through various numerical experiments, including two- and three-dimensional problems with smooth and singular solutions. The numerical examples show that the proposed methods are pressure robust, and the divergence of the resulting velocity and magnetic fields is machine zero for both smooth and singular problems.
\end{abstract}

\begin{keywords}
hybridized discontinuous Galerkin, embedded-hybridized discontinuous Galerkin, resistive magnetohydrodynamics, Stokes equations, Maxwell equations
\end{keywords}

\begin{AMS}
   65N30, 
   76W05  
\end{AMS}

\section{Introduction}\seclab{introduction}  
Magnetohydrodynamics (MHD) is a field within continuum mechanics that investigates the behavior of electrically conducting fluids in the presence of magnetic fields \cite{davidson_introduction_2001}. This coupled phenomenon holds significant importance across various fields including astrophysics \cite{goedbloed_principles_2004,goossens_introduction_2003}, planetary magnetism \cite{busse_magnetohydrodynamics_1978,krause_mean-field_2016}, nuclear engineering \cite{miyamoto_plasma_1980,forsberg_advanced_2005,tabares_present_2015}, and  metallurgical industry \cite{al-habahbeh_review_2016,davidson_magnetohydrodynamics_1999}.
This paper considers the standard form of the stationary incompressible MHD equations 
\cite{armero_long-term_1996,
gerbeau_stabilized_2000,gerbeau_mathematical_2006,gunzburger_existence_1991}. Specifically, ignoring the effects related to high-frequency phenomena and convection current, and focusing on a medium that is non-polarizable, non-magnetizable, and homogeneous, the resulting MHD equations read
\begin{subequations}
  \eqnlab{mhd_nonlin}
  \begin{align}
	\label{eq:mhd1nonlin}
	- \frac{1}{\Rey} \lap \ub + \nabla p + (\ub \cdot \nabla) \bs{u} + \kappa \bb \times (\nabla \times \bb) &= \gb, \\
	\label{eq:mhd3nonlin}
	\nabla \cdot \ub &= 0, \\
	\label{eq:mhd2nonlin}
	\frac{\kappa}{\Rm} \nabla \times (\nabla \times \bb) + \nabla r - \kappa \nabla \times (\ub \times \db) &= \fb, \\
	\label{eq:mhd4nonlin}
	\nabla \cdot \bb &= 0,
  \end{align}
\end{subequations}
where $\ub$ is the velocity of the fluid (plasma or liquid metal), $\bb$ the magnetic field, $p$ the fluid pressure, and $r$ a Lagrange multiplier\footnote{Sometimes, this variable is also referred to as the magnetic pressure.} that is associated with the divergence constraint \eqnref{mhd4nonlin} on $\bb$. The system \eqnref{mhd_nonlin} is characterized by three dimensionless parameters: the fluid Reynolds number $\Rey > 0$, the magnetic Reynolds number $\Rm > 0$, 
and the coupling parameter $\kappa = \Ha^2 / (\Rey\Rm)$, with the Hartmann number $\Ha > 0$.  For a more detailed exploration of these parameters, we refer to \cite{armero_long-term_1996,gerbeau_mathematical_2006, davidson_introduction_2001}.

The major challenges in the discretization of the MHD equations are the following: (i) multi-physics with
disparate temporal (for the time-dependent MHD equations) and spatial scales; (ii) nonlinearity; (iii) Incompressibility. 
The satisfaction of exact mass conservation in \eqnref{mhd3nonlin} is closely tied to the concept of \textit{pressure-robustness}, which is 
the statement about the independence between the magnitude of the pressure error and the \textit{a priori} error estimate for the velocity \cite{linke_role_2014,linke_pressure-robustness_2016,john_divergence_2017}. Without global enforcement of the continuity equation pointwise, large velocity error can be induced by large pressure error. By global enforcement, we mean that the jump of the normal component of velocity has to vanish across the interior boundaries of elements on a given mesh. In other words, the approximation of velocity $\ubH$ is desired to be in the $\Hsp{}(div)$ space in addition to $\Div{\ubH}=0$, where the divergence operator is defined in a weak sense. The definition of the $\Hsp{}(div)$ space and weak derivative will be elaborated in Section \secref{notations};
(iv) The solenoidal constraint for the magnetic field. The violation of this constraint will cause the wrong topologies of magnetic field lines, leading to plasma transport in an incorrect direction. Furthermore, nonphysical forces proportional to the divergence error could be created, potentially inducing instability \cite{brackbill_effect_1980,balsara_staggered_1999,toth_b0_2000}; and  (v) The dual saddle-point structure of the velocity-pressure. The discretized system is subject to having a notorious large conditional number
and is thus difficult to solve.

Many numerical schemes have been proposed to solve linear, nonlinear, time-dependent, and -independent MHD systems. Regarding spatial discretization, hybridized discontinuous Galerkin (HDG) methods have demonstrated remarkable success \cite{lee_analysis_2019,ciuca_implicit_2020,qiu_mixed_2020,la_spina_superconvergent_2022,gleason_divergence-conforming_2022,muralikrishnan_multilevel_2023}. The HDG methods were first introduced under the context of symmetric elliptic problems \cite{cockburn_unified_2009} to overcome the common criticism had by discontinuous Galerkin (DG) methods on the significantly more globally coupled unknowns than continuous Galerkin methods due to the duplication of degrees of freedoms (DOFs) on element boundaries \cite{cockburn_discontinuous_2017}. The HDG methods reduce the computational cost of DG methods by introducing facet variables uniquely defined on the intersections of element boundaries and removing local (element-wise) DOFs through static condensation, which was initially used in mixed finite element methods (i.e.,\cite{boffi_mixed_2013}). Once the facet variables are solved, the element DOFs can be recovered element-by-element in a completely embarrassing parallel fashion.
Consequently, HDG methods are more efficient while retaining the attractive features of DG methods, such as being highly suitable for solving convection-dominated problems in complex geometries, delivering high-order accuracy in approximations, and accommodating $h$/$p$ refinement \cite{hesthaven_nodal_2008}.

The computational cost of HDG methods can be further lowered by using continuous facet variables across the skeleton of the mesh instead of the discontinuous ones used in HDG methods. This approach led to the embedded discontinuous Galerkin (EDG) methods and was first proposed for solving elliptic problems in \cite{guzey_embedded_2007}. Later, an EDG method was developed for incompressible flows in \cite{labeur_galerkin_2007,labeur_energy_2012} where it was shown that the method inherited many of the desirable features of DG methods. At the same time, the required number of DOFs was less than or equal to those of continuous Galerkin methods on a given mesh. Unfortunately, employing the EDG method can compromise the conservative property  \cite{labeur_energy_2012}. In particular, the velocity field cannot be globally divergence-free, and the mass can only be conserved in the local sense. To strike a balance between HDG and EDG methods, an embedded-hybridized discontinuous Galerkin (E-HDG) method was first developed in \cite{rhebergen_embeddedhybridized_2020} for the Stokes equations. The method is proved to be globally divergence-free and  $\Hsp{}(div)$-conforming. The number of globally coupled DOFs can be substantially reduced by using a continuous basis for the facet velocity field while maintaining a discontinuous basis for the facet pressure. The methodology was later adopted to space-time discretization to solve incompressible flows on moving domains \cite{horvath_exactly_2020,horvath_conforming_2022} and is proved to be globally mass conserving, locally momentum conserving, and energy-stable.  

Several approaches have been suggested to address the issue of the divergence-free constraint on the velocity field within the framework of DG, HDG, or E-HDG methods. An approach to overcome the issue is to use $\Hsp{}(div)$-conforming elements in the approximation of velocity, as discussed in \cite{cockburn_note_2007,greif_mixed_2010,fu_explicit_2019} for DG methods. Alternatively, the constraint can be satisfied locally using solenoidal approximation space for DG methods \cite{baker_piecewise_1990,karakashian_nonconforming_1998,li_locally_2005,yakovlev_locally_2013,klingenberg_efficient_2017} and globally for HDG methods \cite{carrero_hybridized_2006}. On the other hand, $\Hsp{}(div)$-conformity can be acquired with the help of facet variables and proper design of numerical flux for HDG \cite{lehrenfeld_hybrid_2010,lehrenfeld_high_2016,rhebergen_analysis_2017,lederer_hybrid_2018,rhebergen_hybridizable_2018,peters_divergence-conforming_2019,gleason_divergence-conforming_2022} and E-HDG \cite{rhebergen_embeddedhybridized_2020,horvath_exactly_2020,horvath_conforming_2022} methods. Another technique to obtain globally divergence-free methods is to perform post-processing using special projection operators \cite{bastian_superconvergence_2003,cockburn_locally_2005,wang_new_2007,cockburn_equal-order_2009,cockburn_divergence-conforming_2014,guzman_hdiv_2017,lee_analysis_2019-1}. One can also apply pressure-correction methods that relies on Helmholtz decomposition to maintain the divergence-free constraint \cite{botti_pressure-correction_2011,klein_simple_2013}. 

We remark that the divergence-free constraint on the magnetic field given in \eqnref{mhd4nonlin} can be implied by the initial condition in the context of time-dependent MHD equations on the continuous level, and it is also known as the solenoidal involution property of the magnetic field. However, temporal and spatial discretization errors can destroy such a property. Numerous methods have been proposed to satisfy the $\Div{\bb}=0$ constraint in MHD calculations, and some of the ideas can be linked to the approaches developed to handle the $\Div{\ub}=0$ constraint in the context of solving incompressible flow problems. These methods include source term methods \cite{powell_solution-adaptive_1999,janhunen_positive_2000},  projection method \cite{brackbill_effect_1980,derigs_novel_2016} (similar to the projection-correction methods \cite{botti_pressure-correction_2011,klein_simple_2013}),  hyperbolic divergence cleaning methods \cite{dedner_hyperbolic_2002,klingenberg_efficient_2017,bohm_entropy_2020,ciuca_implicit_2020} (similar to artificial compressibility methods \cite{bassi_artificial_2006,bassi_implicit_2007}), locally divergence-free methods \cite{li_locally_2005,yakovlev_locally_2013} (use locally solenoidal approximation space and is similar to \cite{baker_piecewise_1990,karakashian_nonconforming_1998,klingenberg_efficient_2017}),  globally divergence-free methods \cite{fu_explicit_2019} (use globally solenoidal approximation space), and constrained transport (CT) methods \cite{evans_simulation_1988,balsara_staggered_1999,londrillo_high-order_2000,toth_b0_2000}. Another approach to obtain a divergence-free and $\Hsp\LRp{div}$-conforming method was developed in \cite{gleason_divergence-conforming_2022}, using an HDG method that hybridizes the facet Lagrange multiplier variable as well.

In this paper, we devise a divergence-free and $\Hsp\LRp{div}$-conforming HDG and E-HDG methods for solving the stationary incompressible viso-resistive MHD equations given in \eqnref{mhd_nonlin}. {\em Though both approaches are constructed in parallel, our exposition  will focus on E-HDG.} 
We obtain $\Hsp\LRp{div}$-conformity by following an idea similar to \cite{rhebergen_embeddedhybridized_2020,horvath_exactly_2020} and \cite{gleason_divergence-conforming_2022} through hybridization via a facet pressure and a facet Lagrange multiplier field using discontinuous facet functions. For the E-HDG variant, we use continuous facet functions for the velocity and the magnetic fields. Moreover, we extended the work in \cite{lee_analysis_2019} and employed an upwind type numerical flux that is based on the first-order form of the linearized MHD system. This is in contrast to the work in  \cite{gleason_divergence-conforming_2022} where the authors hybridized another popular class of DG methods called interior penalty discontinuous Galerkin (IPDG) methods \cite{douglas_interior_1976,baker_finite_1977,wheeler_elliptic_1978,arnold_interior_1982,baker_piecewise_1990} to construct the divergence-free and divergence-conforming HDG method for the time-dependent incompressible viso-resistive MHD equations. To ensure stability, the penalty parameter in IPDG methods, such as the one in typical Nitsche methods, must be sufficiently large. However, no analytically proven bound is available for this penalty parameter. Conversely, our approaches do not suffer from such difficulty, and the criteria of the stabilization parameters are well-defined.  With a few assumptions, our proposed schemes are well-posed. The resulting E-HDG discretization for the linearized MHD model can be incorporated into a Picard iteration to construct a fully nonlinear solver provided it converges. This approach ensures that the divergence-free and $\Hsp\LRp{div}$-conforming properties still hold for the nonlinear case. 
Moreover, all results we discussed in the context of our E-HDG method are still applied to the HDG counterpart, including well-posedness, divergence-free property, and $\Hsp\LRp{div}$-conformity.  

The paper is organized as follows. Section \secref{notations} outlines the notations. Section \secref{EHDG_formulation} proposes both the HDG and E-HDG discrtizatinos for the linearized incompressible viso-resistive MHD equations. In addition, the well-posedness of both methods is proven. Further, we prove the divergence-free property and $\Hsp\LRp{div}$-conformity of both the velocity (i.e., pointwise mass conservation) and the magnetic (i.e., pointwise absence of magnetic monopoles) fields for linear and nonlinear cases. The implementation aspect is discussed in Section \secref{numerical_results}, where we also compare the computational costs required by HDG and E-HDG methods. Several numerical examples for linear and nonlinear incompressible viso-resistive MHD equations are presented to demonstrate the accuracy and convergence of our proposed methods in both two- and three-dimensional settings. Section \secref{conclusion} concludes the paper with future work.
\section{Notations}\seclab{notations}
In this section, we introduce common notations and conventions to be
used in the rest of the paper. Let $\Omega \subset \R^\d$, $d=2,3$,
be a bounded domain such that it is simply connected, and its boundary $\pOmega$ is a Lipschitz manifold with only one component. Suppose that we have a triangulation of $\Omega$ consisting of a finite number of nonoverlapping $d$-dimensional simplices, i.e., triangles for two dimensions and tetrahedra for three dimensions, respectively. We assume that the triangulation is shape-regular, i.e., for all $d$-dimensional simplices in the triangulation, the ratio of the diameter of the simplex and the radius of an inscribed $d$-dimensional ball is uniformly bounded. We will use $\Omegah$ and $\Gh$ to denote the sets of $d$- and ($d-1$)-dimensional simplices of the triangulation
and call $\Gh$ the mesh skeleton of the triangulation. The boundary and interior mesh skeletons are defined by $\Ghb := \{ e \in \Gh \,:\, e \subset \pd \Omega\}$ and $\Gho := \Gh \setminus \Ghb$. We also define $\pOmegah := \LRc{\pK:\K \in \Omegah}$. The mesh size of triangulations is $h := \max_{\K \in \Omegah} \ensuremath{\text{diam}}(\K)$.

We use $\LRp{\cdot,\cdot}_D$ (respectively $\LRa{\cdot,\cdot}_D$)
to denote the $L^2$-inner product on a $d$- (respectively $(d-1)$-)
dimensional domain $D$. The standard notation $W^{s,p}(D)$, $s \ge 0$, $1 \le p \le \infty$,
is used for the Sobolev space on $D$ based on $L^p$-norm with
differentiability $s$ (see, e.g., \cite{evans_partial_2022}) and $\nor{\cdot}_{W^{s,p}(D)}$
denotes the associated norm. In particular, if $p = 2$, we use $H^s(D) := W^{s,2}(D)$
and $\nor{\cdot}_{s,D}$.
$W^{s,p}(\Omegah)$ denotes the space of functions whose restrictions on $\K$
reside in $W^{s,p}(\K)$ for each $\K \in \Omegah$ and its norm is
$\nor{u}_{W^{s,p}(\Omegah)}^p := \sum_{\K \in \Omegah} \nor{u|_{\K}}_{W^{s,p}(\K)}^p$
if $1 \le p < \infty$ and $\nor{u}_{W^{s,\infty}(\Omegah)} := \max_{\K \in \Omegah} \nor{u|_{\K}}_{W^{s,\infty}(\K)}$.
For simplicity, we use $\LRp{\cdot, \cdot}$, $\LRa{\cdot,\cdot}$,
$\nor{\cdot}_s$, $\nor{\cdot}_{\pOmegah}$, and $\nor{\cdot}_{W^{s,\infty}}$ for
$\LRp{\cdot, \cdot}_{\Omega}$, $\LRa{\cdot,\cdot}_{\pOmegah}$,
$\nor{\cdot}_{s, \Omega}$, $\nor{\cdot}_{0, \pOmegah}$, and $\nor{\cdot}_{W^{s,\infty}(\Omegah)}$, respectively.


For vector- or matrix-valued functions these notations are naturally extended with a
component-wise inner product.
We define similar spaces (respectively inner products and norms) on a single element and a single skeleton face/edge
by replacing $\Omegah$ with $\K$ and $\Gh$ with $\e$.
We define the gradient of a vector, the divergence of a matrix, and the outer product symbol $\otimes$ as:
\[
  \LRp{\nabla \ub}_{ij} = \pp{u_i}{x_j}, \quad
  \LRp{\div \Lb}_i = \div \Lb\LRp{i,:} = \sum_{j=1}^d\pp{\bs{L}_{ij}}{x_j}, \quad
  \LRp{\bs{a} \otimes \bs{b}}_{ij} = a_i b_j = \LRp{\bs{a}\bs{b}^T}_{ij}.
\]
The curl of a vector when $d=3$ takes its standard form, $\LRp{\Curl\bb}_i=\sum_{j,k}\epsilon_{ijk}\pp{\bb_k}{x_j}$, where $\epsilon$ is the Levi-Civita symbol. When $d=2$, let us explicitly define the curl of a vector as the scalar quantity $\Curl\bb=\pp{\bb_2}{x_1}-\pp{\bb_1}{x_2}$, and the curl of a scalar as the vector quantity $\Curl a=\LRp{\pp{a}{x_2},-\pp{a}{x_1}}$. In this paper, $\n$ denotes a unit outward normal vector field on faces/edges.
If $\pd \Km \cap \pd \Kp \in \Gh$ for two
distinct simplices $\Km, \Kp$, then $\nm$ and $\np$ denote
the outward unit normal vector fields on $\pd \Km$ and $\pd \Kp$, respectively, and
$\nm = - \np$ on $\pd \Km \cap \pd \Kp$. We simply use $\n$ to denote either $\nm$ or $\np$ in an
expression that is valid for both cases, and this convention is also
used for other quantities (restricted) on a face/edge $\e \in \Gh$. We also define $\bs{N}:=\n\otimes\n$ and $\bs{T}:=\bs{I}-\bs{N}$. For a scalar quantity
$u$ which is double-valued on $e := \pd \Km \cap \pd \Kp$, the jump term on $e$ is defined by
$\jump{u \n}|_e = u^+ \np + u^- \nm$ where $u^+$ and $u^-$ are the traces of $u$ from $\Kp$-
and $\Km$-sides, respectively. For double-valued vector quantity $\ub$ and matrix quantity $\Lb$,
jump terms are $\jump{\ub \cdot \n}|_e = \ub^+ \cdot \np + \ub^- \cdot \nm$, ${\jump{\ub\times\n}}|_{\e}=\ub^+\times\n^++\ub^-\times\n^-$, and
$\jump{\Lb \n}|_e = \Lb^+ \np + \Lb^- \nm$ where $\Lb \n$ denotes the matrix-vector product.

We define $\Poly_k\LRp{\K}$ as
the space of polynomials of degree at most $k$ on $\K$, with $k \geq 0$, and we define
\algns{
  \Poly_k\LRp{\Omegah} := \LRc{ u \in \Lsp^2(\Omega) \;:\; \eval{u}_{\K} \in \Poly_k\LRp{\K} \; \forall \K \in \Omegah } .
}
The space of polynomials on the mesh skeleton $\Poly_k\LRp{\Gh}$ is similarly defined,
and their extensions to vector- or matrix-valued polynomials $\LRs{\Poly_k(\Omegah)}^d$,
$\LRs{\Poly_k(\Omegah)}^{d\times d}$, $\LRs{\Poly_k(\Gh)}^d$, etc, are straightforward.

Finally, we use the usual definition of the $\Hsp{}(div)$- and $\Hsp{}(curl)$-conforming spaces, which are typical for mixed methods, and for methods dealing with electromagnetism, see \cite{ern_finite_2021, boffi_mixed_2013},
\begin{align}
&\Hsp{}\LRp{div,\Omega}:=\LRc{\ub \in \LRs{\Lsp^2(\Omega)}^d \;:\; \div\ub\in{\Lsp^2(\Omega)}},\notag\\
&\Hsp{}\LRp{curl,\Omega}:=\LRc{\ub \in \LRs{\Lsp^2(\Omega)}^d \;:\; \Curl\ub\in\LRs{\Lsp^2(\Omega)}^{\tilde{d}}},\eqnlab{eq:Hcurl}
\end{align}
where $\tilde{d} = 3$ if $d = 3$, $\tilde{d} = 1$ if $d = 2$. In addition, the divergence $\div(\cdot)$ and curl $\curl(\cdot)$ operators should be thought of in the weak sense (an extension of weak derivative defined in Definition 2.3 in \cite{ern_finite_2021}). Note that the jump condition $\eval{\jump{\ub\cdot\n}}{\e}=0$ and $\eval{\jump{\ub\times\n}}{\e}=0$ is necessary for ensuring $\ub\in\Hsp{}\LRp{div,\Omega}$ and $\ub\in\Hsp{}\LRp{curl,\Omega}$, respectively (Theorem 18.10 in \cite{ern_finite_2021}).
\section{An E-HDG Formulation}\seclab{EHDG_formulation}
First, consider the following incompressible viso-resistive MHD system linearized from Eq. \eqnref{mhd_nonlin}
\begin{subequations}
  \eqnlab{mhdlin}
  \begin{align}
	\label{eq:mhd1lin_1}
	- \frac{1}{\Rey} \lap \ub + \nabla p + (\wb \cdot \nabla) \bs{u} + \kappa \db \times (\nabla \times \bb) &= \gb, \\
	\label{eq:mhd3lin_1}
	\nabla \cdot \ub &= 0, \\
	\label{eq:mhd2lin_1}
	\frac{\kappa}{\Rm} \nabla \times (\nabla \times \bb) + \nabla r - \kappa \nabla \times (\ub \times \db) &= \fb, \\
	\label{eq:mhd4lin_1}
	\nabla \cdot \bb &= 0.
  \end{align}
\end{subequations}
Here, $\db$ is a prescribed magnetic field 
and $\wb$ is a prescribed velocity field.
From this point forward, we assume (see, e.g., \cite{cesmelioglu_analysis_2013,houston_mixed_2009} for similar assumptions) $\db \in \LRs{W^{1,\infty}\LRp{\Omega}}^d$,
$\wb \in \LRs{W^{1,\infty}\LRp{\Omegah}}^d \cap H(div,\Omega)$, $\nabla \cdot \wb = 0$ and $\gb,\fb\in\LRs{\Lsp^{2}\LRp{\Omega}}^d$.

To apply the upwind type of numerical flux based on the work \cite{lee_analysis_2019}, we cast \eqnref{mhdlin} into a first-order form by introducing auxiliary variables $\Lb$ and $\Jb$, 
\begin{subequations}
  \eqnlab{mhdlin-first}
  \begin{align}
	\label{eq:mhd1lin1n}
	\Rey \Lb - \nabla \ub &= \bs{0}, \\
	\label{eq:mhd2lin1n}
	- \nabla \cdot \Lb + \nabla p + (\wb \cdot \nabla) \ub + \kappa \db \times (\nabla \times \bb) &= \gb, \\
	\label{eq:mhd3lin1n}
	\nabla \cdot \ub &= 0, \\
	\label{eq:mhd4lin1n}
	\frac{\Rm}{\kappa} \Jb - \nabla \times \bb &= \bs{0}, \\
	\label{eq:mhd5lin1n}
	\nabla \times \Jb + \nabla r  - \kappa \nabla \times (\ub \times \db) &= \fb, \\
	\label{eq:mhd6lin1n}
	\nabla \cdot \bb &= 0,
  \end{align}
\end{subequations}
with Dirichlet boundary conditions 
\begin{equation}
  \eqnlab{MHD_BCs_Dirichlet}
  \begin{aligned}
	\ub = \bs{u}_D , \quad \bb:= \bb_D, \quad r = 0   \qquad  \textrm{ on } \pOmega.
  \end{aligned}
\end{equation}
In addition, we require the compatibility condition for $\ub_D$ and the mean-value zero condition for $p$:
\begin{equation}
  \label{compatibility}
  \LRa{\ub_D \cdot \n, 1}_{\pOmega} = 0 , \qquad 
  (p,1)_\Omega = 0.
\end{equation}

To achieve $\Hsp{}(div)$-conforming property, we introduce constant parameters $\alpha_1,\beta_1,\beta_2\in\R$, and define the numerical flux inspired by the work \cite{lee_analysis_2019} as 

\begin{equation}\small
  \label{eq:EHDGflux}
  \LRs{
	\begin{array}{c}
	  \Fh^1 \cdot \n \\
	  \Fh^2 \cdot \n \\
	  \textcolor{black}{\Fh^3 \cdot \n} \\
	  \Fh^4 \cdot \n \\
	  \Fh^5 \cdot \n \\
	  \textcolor{black}{\Fh^6 \cdot \n}
	\end{array}
  } =
  \LRs{
	\hspace{-0.4em}
	\begin{array}{c}
	  -\ubh \otimes \n \\
	  -\Lb\n + m\ub +\textcolor{black}{\ph}\n + \half \kappa \db \times \LRp{\n \times \LRp{\bb+\bbh}} + \alpha_1 \LRp{\ub - \ubh} \\
	  \textcolor{black}{\ub \cdot \n} \\
	  -\n \times \bbh \\
	  \n \times \Jb  + \textcolor{black}{\rh}\n -\half \kappa \n \times \LRp{\LRp{\ub +\ubh}\times \db} + \LRp{\beta_1\opT+\beta_2\opN}\LRp{\bb-\bbh} \\
	  \textcolor{black}{\bb \cdot \n}
	\end{array}
	\hspace{-0.4em}
  },
\end{equation}
where $m:= \wb \cdot \n$. It should be noted that $\ubh$, $\ph$, $\bbh$, and $\rh$ are the restrictions (or traces) of $\ub$, $\p$, $\bb$, and $\r$ on $\Gh$. These $\ubh$, $\ph$, $\bbh$, and $\rh$ will be regarded as unknowns in discretizations to obtain an E-HDG method. It will be shown that the conditions $\alpha_1
> \half \nor{\wb}_\Linfty$, and $\beta_1\opT+\beta_2\opN>0$\footnote{The sign of ``greater than" here means that the matrix (or the second order tensor) $\beta_1\opT+\beta_2\opN$ is positive definite.} are
sufficient for the well-posedness of our E-HDG formulation.
Note that all 6 components of the E-HDG flux, $\Fh$, for simplicity are denoted
in the same fashion (by a bold italic symbol).
It is, however, clear from \eqnref{mhdlin-first} that  $\Fh^1$ is a third order tensor,
$\Fh^2$ is a second order tensor, $\Fh^3$ is a vector, etc, and that the
normal E-HDG flux components, $\Fh^i \cdot \n$ in \eqref{eq:EHDGflux},
are tensors of one order lower.

For discretization, we introduce the discontinuous piecewise and the continuous polynomial spaces
\begin{gather*}
  { \GbM := \LRs{\Poly_k(\Omegah)}^{d \times d}, \qquad \VbM := \LRs{\Poly_k(\Omegah)}^d, \qquad \QbM := \Poly_{\textcolor{black}{\kbr}}(\Omegah) ,} \\
  { \HbM := \LRs{\Poly_k(\Omegah)}^{\tilde{d}}, \qquad \CbM := \LRs{\Poly_{k}(\Omegah)}^d, \qquad \SbM := \Poly_{\textcolor{black}{\kbr}}(\Omegah), \qquad \MubM := \LRs{\Poly_k(\Gh)\textcolor{black}{\cap\Csp(\Gh)}}^d ,} \\
  {\RhoM:= \LRs{\Poly_k(\Gh)}, \qquad \LambM := \LRs{\Poly_k(\Gh)\textcolor{black}{\cap\Csp(\Gh)}}^d, \qquad \GambM := \LRs{\Poly_k(\Gh)},}
\end{gather*}
where $\kbr:=\k-1$, $\Csp(\Gh)$ is the continuous function space defined on the mesh skeleton, and $\tilde{d}$ is defined in \eqnref{eq:Hcurl}.

\begin{rema}\remalab{HDG}
The functions in $\MubM$ and $\LambM$ are used to approximate the traces of the velocity and the magnetic field, respectively. By a slight modification of these spaces (i.e., $\MubM:=\LRs{\Poly_k(\Gh)}^d$ and $\LambM := \LRs{\Poly_k(\Gh)}^d$), a divergence-free and $\Hsp{}(div)$-conforming HDG method can be obtained. All the results presented in Sections \secref{wellposednessEHDG}, \secref{localwellposednessEHDG} and \secref{conservationEHDG} can be directly applied to the resulting HDG method. In addition, we will numerically compare the computational time needed by HDG and E-HDG methods in Section \secref{numerical_results}. 
\end{rema}

Let us introduce two identities which are useful throughout the paper:
\begin{subequations}
  \eqnlab{eq:int-by-parts}
  \algn{
	\LRp{ \ub , \db \times \LRp{\Curl \bb} }_\K
	&= \LRp{ \bb, \Curl \LRp{\ub \times \db} }_\K + \LRa{\db \times \LRp{\n \times \bb}, \ub}_\pK, \\
	\LRs{ \db \times \LRp{\n \times \bb} } \cdot \ub
	&= - \LRs{\n \times \LRp{\ub \times \db}} \cdot \bb .
  }
\end{subequations}
These identities follow from integration by parts and vector product identities.

Next, we multiply \eqref{eq:mhd1lin1n} through \eqref{eq:mhd6lin1n} by
test functions $(\Gb,\vb,q,\Jb,\cb,s)$, integrate by parts all terms, and introduce the numerical
flux \eqref{eq:EHDGflux} in the boundary terms.  This results in a
local discrete weak formulation: 
\begin{subequations}
  \eqnlab{local}
  \begin{align}
	\label{eq:local_1_1}
	\Rey\LRp{\LbH, \Gb}_\K + \LRp{\ubH, \Div\Gb}_\K
	+ \LRa{\FbhH^1 \cdot \n, \Gb}_\pK &= 0, \\
	\label{eq:local_2_1}
	\LRp{\LbH, \Grad\vb}_\K - \LRp{\pH, \Div \vb}_\K - \LRp{\ubH \otimes \wb , \Grad \vb}_\K \quad & \\
	+ \kappa\LRp{\bbH, \Curl\LRp{\vb \times \db}}_\K
	+ \LRa{\FbhH^2 \cdot \n, \vb}_\pK &= \LRp{\gb, \vb}_\K, \notag \\ 
	\label{eq:local_3_1}
	-\LRp{\ubH, \Grad \q}_\K
	+ \LRa{\textcolor{black}{\FbhH^3 \cdot \n}, \q}_\pK &= 0, \\
	\label{eq:local_4_1}
	\frac{\Rm}{\kappa}\LRp{\JbH, \Hb}_\K - \LRp{\bbH, \Curl \Hb}_\K
	+ \LRa{\FbhH^4 \cdot \n, \Hb}_\pK &= 0, \\
	\label{eq:local_5_1}
	\LRp{\JbH, \Curl \cb}_\K - \LRp{\rH, \Div \cb}_\K - \kappa\LRp{\ubH, \db \times \LRp{\Curl\cb}}_\K \quad & \\
	+ \LRa{\FbhH^5 \cdot \n, \cb}_\pK &= \LRp{\fb,\cb}_\K, \notag \\
	\label{eq:local_6_1}
	-\LRp{\bbH, \Grad \s}_\K
	+ \LRa{\textcolor{black}{\FbhH^6 \cdot \n}, \s}_\pK &= 0,
  \end{align}
\end{subequations}
for all $(\Gb,\vb,q,\Hb,\cb,s) \in \GbM\LRp{K} \times \VbM\LRp{K} \times \QbM\LRp{K}\times \HbM\LRp{K} \times \CbM\LRp{K} \times \SbM\LRp{K}$
and for all $\K \in \Omegah$, where quantities with subscript $h$ are the discrete counterparts of the continuous ones, for example, $\ubH$ and  $\LbH$ are the discrete approximations of $\ub$ and $\Lb$.

Since  $\ubh$, $\ph$, $\bbh$, and $\rh$ are facet unknowns introduced in addition to the original unknowns, we
need to equip extra equations to make the system \eqnref{local} well-posed. To
that end, we observe that an
element $\K$ communicates with its neighbors only through the trace unknowns. 
For the E-HDG method to be conservative, we weakly enforce the
continuity of the numerical flux \eqref{eq:EHDGflux} across each interior edge.
Since $\ubhH$ and $\bbhH$ are single-valued on $\Gh$, we automatically have that
$\jump{\FbhH^1 \cdot \n} = \bs{0}$ and $\jump{\FbhH^4 \cdot \n} = \bs{0}$.
The conservation constraints to be enforced are reduced to
\begin{equation}\label{global}\small
    \begin{aligned}
      &\LRa{\jump{\FbhH^2 \cdot \n},\mub}_\ed = 0, \qquad 
      \LRa{\jump{\FbhH^3 \cdot \n},\rho}_{\ed} = 0, \\
      &\LRa{\jump{\FbhH^5 \cdot \n}, \lambdab}_\ed = 0, \qquad 
      \LRa{\jump{\FbhH^6 \cdot \n}, \gamma}_\ed = 0,
    \end{aligned}
\end{equation}
for all $(\mub,\rho,\lambdab,\gamma) \in \MubM\LRp{e} \times \RhoM\LRp{e} \times \LambM\LRp{e} \times \GambM\LRp{e}$, 
and for all $e$ in $\Gho$. Furthermore, the following constraint on the domain boundary is required  in order to establish the  well-posedness of our HDG formulations: 
\begin{equation}
    \eqnlab{Constraint}
    \LRa{\ubhH\cdot\n,\rho}_e = \LRa{\ubH\cdot\n,\rho}_e,
\end{equation}
for all $\rho\in\RhoM\LRp{e}$ for all $e$ in $\Ghb$. This constraint means that we weakly enforce $\ubhH=\ubH$ on the boundary, and is also used in \cite{labeur_energy_2012,rhebergen_spacetime_2012,rhebergen_hybridizable_2018} where hybridized IPDG methods are developed for solving the incompressible Navier-Stokes equations. A similar constraint is applied to the magnetic field to ensure pointwise satisfaction of no monopole condition,
\begin{equation}
    \eqnlab{Constraint_b}
    \LRa{\bbhH\cdot\n,\gamma}_e = \LRa{\bbH\cdot\n,\gamma}_e,
\end{equation}
for all $\gamma\in\GambM\LRp{e}$ for all $e$ in $\Ghb$.

Finally, we enforce the Dirichlet boundary conditions weakly through the facet unknowns:
\begin{equation}\eqnlab{BCs}
\begin{aligned}
  \LRa{\ubhH,\mub}_e = \LRa{\ub_D,\mub}_e, \qquad 
  \LRa{\bbhH,\lambdab}_e = \LRa{\bb_D,\lambdab}_e, \qquad
  \LRa{\rhH,\gamma}_e = 0,
\end{aligned}
\end{equation}
for all $(\mub,\lambdab,\gamma) \in \Mb_h\LRp{e} \times \bs{\Lambda}_h\LRp{e} \times \Gamma_h\LRp{e}$ 
for all $e$ in $\Ghb$. 
In Eq. \eqnref{local}-\eqnref{Constraint_b} we seek $(\LbH,\ubH,\pH,\JbH,\bbH,\rH) 
\in \GbM \times \VbM \times \QbM \times \HbM \times \CbM \times \SbM$ and
$(\ubhH, \ph,\bbhH,\rhH) \in \MubM \times \RhoM \times \LambM \times \GambM$.
For simplicity, we will not state explicitly that equations hold for all test functions, for all elements, or
for all edges.

We will refer to $\LbH,\ubH,\pH,\JbH,\bbH,$ and $\rH$ as the
\textit{local variables}, and to equation \eqnref{local} on each element as the \textit{local solver}. 
This reflects the fact that we can solve for local variables
element-by-element as functions of $\ubhH, \phH, \bbhH,$ and $\rhH$. On the
other hand, we will refer to $\ubhH, \phH, \bbhH,$ and $\rhH$
as the \textit{global variables}, which are governed by equations
\eqref{global},\eqnref{BCs}, and \eqnref{Constraint} on the mesh skeleton. 
For the uniqueness of the discrete pressure $\pH$, 
we enforce the discrete counterpart of \eqref{compatibility}:
\begin{align}
  \label{eq:global_5_1}
  (\pH,1) &= 0.
\end{align}

\subsection{Well-posedness of the E-HDG formulation}
\seclab{wellposednessEHDG}
In this subsection, we discuss the well-posedness of \eqnref{local}--\eqnref{global_5_1}. We would like to point out that the result presented in this subsection is also valid for the proposed HDG version in \cref{rema:HDG}.
\begin{theorem}\theolab{global}
Let $\Omega$ be simply connected with one component to $\pOmega$.
Let $\alpha_1$, $\beta_1$, $\beta_2$ $\in\R$ such that $\alpha_1 > \half \nor{\wb}_{L^\infty(\Omega)}$ and  $\beta_1\opT+\beta_2\opN>0$.
The system \eqnref{local}--\eqnref{global_5_1} is well-posed, in the sense that
given $\fb$, $\gb$, $\ub_D$, and $\hb_D$, there exists a unique solution
${\LRp{ \LbH, \ubH, \pH, \JbH, \bbH, \rH, \ubhH, \phH, \bbhH, \rhH }}$.
\end{theorem}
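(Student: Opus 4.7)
The plan is to exploit linearity and the finite-dimensionality of the discrete system: it suffices to prove uniqueness, i.e., that the homogeneous problem (with $\fb=\gb=\bs{0}$ and $\ub_D=\bb_D=\bs{0}$) admits only the trivial solution. I would proceed via an energy argument: test the local equations \eqnref{local} with $(\Gb,\vb,q,\Hb,\cb,s)=(\LbH,\ubH,\pH,\JbH,\bbH,\rH)$, sum over $\K\in\Omegah$, and then subtract the conservation constraints \eqref{global} tested with $(\mub,\rho,\lambdab,\gamma)=(\ubhH,\phH,\bbhH,\rhH)$. The boundary contributions on $\pOmega$ are controlled by \eqnref{Constraint}, \eqnref{Constraint_b}, and the homogeneous Dirichlet data in \eqnref{BCs}.

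The centerpiece is a clean energy identity. The pressure/multiplier contributions from \eqref{eq:local_2_1}--\eqref{eq:local_3_1} and \eqref{eq:local_5_1}--\eqref{eq:local_6_1} cancel once combined, because $\FbhH^3\cdot\n$ and $\FbhH^6\cdot\n$ are single-valued across interior faces. The coupling terms $\kappa(\bbH,\Curl(\vb\times\db))$ and $\kappa(\ubH,\db\times\Curl\cb)$ cancel against each other via the vector identities \eqnref{eq:int-by-parts}, aided by the symmetric $\half\kappa\db\times(\n\times(\bbH+\bbhH))$ and $-\half\kappa\n\times((\ubH+\ubhH)\times\db)$ choices in the numerical flux \eqref{eq:EHDGflux}. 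For the convection term, element-wise integration by parts combined with $\Div\wb=0$, the single-valuedness of $\ubhH$, and the boundary trace property yields a net boundary contribution of $\half\LRa{m(\ubH-\ubhH),\ubH-\ubhH}_\pK$. Assembling all pieces produces
\begin{align*}
\Rey\nor{\LbH}_0^2 + \tfrac{\Rm}{\kappa}\nor{\JbH}_0^2
& + \sum_{\K\in\Omegah}\LRa{\LRp{\alpha_1-\tfrac{m}{2}}(\ubH-\ubhH),\ubH-\ubhH}_\pK \\
& + \sum_{\K\in\Omegah}\LRa{(\beta_1\opT+\beta_2\opN)(\bbH-\bbhH),\bbH-\bbhH}_\pK = 0.
\end{align*}
Under $\alpha_1>\half\nor{\wb}_\Linfty$ and $\beta_1\opT+\beta_2\opN>0$, each term is nonnegative, hence every term vanishes: $\LbH=\bs{0}$, $\JbH=\bs{0}$, $\ubH=\ubhH$ on each $\pK$, and $\bbH=\bbhH$ on each $\pK$.

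Next, I would propagate these consequences. From \eqref{eq:local_1_1} with $\LbH=\bs{0}$, reverse integration by parts gives $\Grad\ubH\equiv\bs{0}$ element-wise, so $\ubH$ is piecewise constant; since $\ubhH\in\MubM$ is single-valued across interior faces (in the HDG case) or even continuous (in the E-HDG case) and vanishes on $\pOmega$, the match $\ubH=\ubhH$ on $\pK$ forces $\ubH\equiv\bs{0}$ and $\ubhH\equiv\bs{0}$. Similarly, \eqref{eq:local_4_1} with $\JbH=\bs{0}$ yields $\Curl\bbH\equiv\bs{0}$ element-wise. The match $\bbH=\bbhH$ on $\pK$ delivers tangential continuity, while the divergence equation \eqref{eq:local_6_1} combined with \eqref{global} and \eqnref{Constraint_b} delivers $\Div\bbH=0$ globally with vanishing normal trace on $\pOmega$; thus $\bbH\in\Hsp(\mathrm{curl},\Omega)\cap\Hsp(\mathrm{div},\Omega)$ is curl-free and divergence-free with vanishing boundary traces, and the simply-connectedness of $\Omega$ then forces $\bbH\equiv\bs{0}$ and $\bbhH\equiv\bs{0}$.

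Finally, with all vector/tensor unknowns vanishing, the momentum equation \eqref{eq:local_2_1} reduces to $-(\pH,\Div\vb)_\K+\LRa{\phH,\vb\cdot\n}_\pK=0$ for every $\vb$, which together with the constraint \eqref{global} tested with $\rho=\phH$ and the zero-mean condition \eqref{eq:global_5_1} implies $\pH\equiv 0$ and $\phH\equiv 0$ by the standard HDG inf-sup/surjectivity argument valid on simplicial meshes for the chosen polynomial pair. An analogous argument applied to \eqref{eq:local_5_1} with the boundary condition $\rhH=0$ on $\pOmega$ gives $\rH\equiv 0$ and $\rhH\equiv 0$. The main obstacle I expect is the bookkeeping in the energy identity --- tracking all jump/average boundary terms, verifying the exact cancellation of the coupling and pressure/multiplier contributions, and correctly combining the convection boundary term with the upwinding $\alpha_1(\ubH-\ubhH)$ in the flux; a secondary delicate point is ruling out a nontrivial discrete magnetic field with zero curl and zero divergence, which crucially requires the simple-connectedness of $\Omega$ together with the joint $\Hsp(\mathrm{div})$/$\Hsp(\mathrm{curl})$ conformity enforced through the facet unknowns.
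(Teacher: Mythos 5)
Your proposal is correct and follows essentially the same route as the paper: reduce to uniqueness for the homogeneous system, derive the energy identity by testing the local equations with the solution and subtracting the transmission conditions tested with the facet unknowns (using \eqnref{Constraint}, \eqnref{Constraint_b}, and the homogeneous data in \eqnref{BCs} to kill the boundary terms), then propagate $\LbH=\JbH=\bs{0}$ and the trace matches to eliminate the velocity and, via the curl-free/divergence-free harmonic-field argument on the simply connected domain, the magnetic field, and finally recover $\pH=\phH=0$ and $\rH=\rhH=0$ from the reduced momentum and induction equations. The only cosmetic discrepancies are the sign of the convective contribution --- the paper's identity carries $\alpha_1+\frac{m}{2}$ rather than $\alpha_1-\frac{m}{2}$, which is immaterial since $\alpha_1>\half\nor{\wb}_{\Linfty}$ makes either coefficient positive pointwise --- and the final pressure/multiplier step, which the paper carries out via the N\'ed\'elec-moment argument of \cite[Proposition~4.6]{rhebergen_analysis_2017} (yielding $\pH=\phH$ and $\rH=\rhH$ on $\pK$ and hence elementwise-constant $\pH,\rH$) rather than your generically invoked inf-sup/surjectivity statement.
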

\begin{proof}
\eqnref{local}--\eqnref{global_5_1} has the same number of equations and unknowns, so it is enough to show
that $\LRp{\gb,\fb, \ub_D, \bb_D} = \bs{0}$ implies
${(\LbH,\ubH,\pH,\JbH,\bbH, \rH,\ubhH,\phH,\bbhH,\rhH) = \bs{0}}$.
To begin, we take $(\Gb,\vb,q,\Jb,\cb,s) = (\LbH,\ubH,\pH,\JbH,\bbH,\rH)$,
integrate by parts the first four terms of \eqref{eq:local_2_1} and the first term of \eqref{eq:local_5_1},
sum the resulting equations in \eqnref{local}, and sum over all elements to arrive at 
\begin{align}
  \Rey\nor{\LbH}_0^2 + \frac{\Rm}{\kappa}\nor{\JbH}_0^2
  - \LRa{\ubhH \otimes \n, \LbH}
  + \LRa{ \frac{m}{2} \ubH, \ubH } \notag
  + \LRa{ \alpha_1 (\ubH-\ubhH), \ubH } \\
  \label{eq:sum_energy4_1}
  + \LRa{\phH\n, \ubH}
  + \LRa{ \half \kappa \db \times \LRp{\n \times \bbhH}, \ubH }
  - \LRa{ \n \times \bbhH, \JbH }
  + \LRa{ \rhH \n, \bbH } \\
  + \LRa{ \LRp{\beta_1\opT+\beta_2\opN}(\bbH-\bbhH), \bbH } \notag
  - \LRa{ \half\kappa\n \times \LRp{\ubhH \times \db}, \bbH }
  &= 0,
\end{align}
where we have used $\Div \wb = 0$ and the following integration by parts identities:
\algns{
- \LRp{\ubH,\wb \cdot \nabla \ubH}_\K \allowbreak
= - \half\LRp{\wb,\nabla(\ubH\cdot\ubH)}_\K \allowbreak
&= - \LRa{\frac{m}{2}\ubH,\ubH}_\pK .
}
Next, setting $({\mub, \rho, \lambdab, \gamma}) = ({\ubhH,\phH,\bbhH,\rhH})$,
and summing \eqref{global} over all interior edges give
\begin{align}
  \LRa{-\LbH\n + m\ubH +\phH\n + \half\kappa\db \times \LRp{\n \times \bbH}
  + \alpha_1\LRp{\ubH - \ubhH},\ubhH}_{\pOmegah\setminus\pOmega} \notag & \\
  \label{eq:sum_energy5_1}
  + \LRa{\ubH\cdot\n ,\phH}_{\pOmegah\setminus\pOmega}\notag &\\
  + \LRa{\n \times \JbH + \rhH\n -\half\kappa\n \times \LRp{\ubH \times \db}
  + \LRp{\beta_1\opT+\beta_2\opN}\LRp{\bbH -\bbhH}, \bbhH}_{\pOmegah\setminus\pOmega} & \\
  \notag
  +\LRa{\bbH\cdot\n, \rhH}_{\pOmegah\setminus\pOmega}
  &= 0,
\end{align}
where we used the continuity of $\db$ and the single-valued nature across the element boundaries of global variables to eliminate 
$\langle{\db \times ({\n \times \bbhH}),\ubhH}\rangle_{\pOmegah\setminus\pOmega}$ and 
$\langle{\n \times \LRp{\ubhH \times \db},\bbhH}\rangle_{\pOmegah\setminus\pOmega}$.

Since $\ub_D = \bs{0}$ and $\bb_D = \bs{0}$ by assumption, we conclude from the boundary conditions \eqnref{BCs}
that $\ubhH = \bs{0}$, $\bbhH = \bs{0}$, and $\rhH = 0$  on $\pOmega$. In addition, from the constraint \eqnref{Constraint} we also have $\LRa{\ubH\cdot\n,\phH}_e=\LRa{\ubhH\cdot\n,\phH}_e$ on the boundary and hence $\LRa{\ubH\cdot\n,\phH}_{\pOmega}=0$. 
Subtracting \eqref{eq:sum_energy5_1} from \eqref{eq:sum_energy4_1} and using the fact that $\ubhH, \bbhH,$ $\rhH$, and $\LRa{\ubH\cdot\n,\phH}_{\pOmega}$ vanish on the physical boundary $\pOmega$,  we arrive at
\begin{align}
  \label{eq:sum_energy6_2}
  \Rey\nor{\LbH}_0^2 + \frac{\Rm}{\kappa}\nor{\JbH}_0^2
  + \LRa{ \alpha_1 (\ubH-\ubhH), (\ubH-\ubhH) }
  + \LRa{ \frac{m}{2} \ubH, \ubH } \quad & \\
  \quad - \LRa{ m \ubH, \ubhH }  \notag
    + \LRa{\LRp{\beta_1\opT+\beta_2\opN}\LRp{\bbH-\bbhH}, \bbH-\bbhH}
    &= 0.
\end{align}
Finally, using the fact that $\wb \in H(div,\Omega)$ and $\ubhH = \bs{0}$ on $\pOmega$,
we can freely add
$0=\LRa{\frac{m}{2}\ubhH,\ubhH}$ to rewrite \eqref{eq:sum_energy6_2} as
\begin{align}
  \label{eq:sum_energy6_1}
  \Rey\nor{\LbH}_0^2 + \frac{\Rm}{\kappa}\nor{\JbH}_0^2
  + \LRa{ \LRp{\alpha_1+\frac{m}{2}} (\ubH-\ubhH), (\ubH-\ubhH) } \quad & \\
    + \LRa{\LRp{\beta_1\opT+\beta_2\opN}\LRp{\bbH-\bbhH}, \bbH-\bbhH}
    &= 0.\notag  
\end{align}
Recalling $\alpha_1 > \half \nor{\wb}_\Linfty$ and $\beta_1\opT+\beta_2\opN>0$, we can conclude that
$\LbH = \bs{0}$, $\JbH = \bs{0}$, that $\ubH = \ubhH$, and $\bbH = \bbhH$ on $\Gh$. 

Now, we integrate \eqref{eq:local_1_1} by parts to obtain $\Grad\ubH = \bs{0}$ in $\K$,
which implies that $\ubH$ is element-wise constant.
The fact that $\ubH = \ubhH$ on $\Gh$
means $\ubH$ is continuous across $\Gh$. Since $\ubH = \bs{0}$ on $\pOmega$,
we conclude that $\ubH = \bs{0}$ and therefore $\ubhH = \bs{0}$.

Since $\bbH = \bbhH$ on $\Gh$, $\bbH$ is continuous on $\Omega$.
Integrating both \eqref{eq:local_4_1} and
\eqref{eq:local_6_1} by parts, we have $\Curl \bbH = \bs{0}$ and $\Div \bbH = 0$ on $\Omega$.
When $\bbH \in H(div,\Omega) \cap H(curl,\Omega)$ and $\bbH = \bs{0}$ on $\pOmega$,
and recalling that $\Omega$ is simply connected with one component to the boundary,
there is a constant $C>0$ such that $\norm{\bbH}_0
\leq C (\norm{\Div \bbH}_0 + \norm{\Curl \bbH}_0)$ \cite[Lemma~3.4]{girault_finite_1986}. 
This implies that $\bbH = \bs{0}$, and hence $\bbhH = \bs{0}$.

Taking account of the vanishing variables we had discussed, integrating by parts reduces 
\eqref{eq:local_2_1} and \eqref{eq:local_5_1} to:
\begin{equation}
    (\Grad \pH, \vb)_\K - \LRa{\LRp{\pH - \phH}\n,\vb}_{\pK} = 0,
\end{equation}
and
\begin{equation}
    (\Grad \rH, \cb)_\K - \LRa{\LRp{\rH - \rhH}\n,\cb}_{\pK} = 0,  
\end{equation}
respectively. Given that $\pH|_{\K}$, $\rH|_{\K}\in\Poly_{k-1}\LRp{\K}$ and a simplicial mesh is used, we can invoke the argument of Nédélec space to conclude that $\pH = \phH$ and $\rH = \rhH$ on $\pK$ (Proposition 4.6 in \cite{rhebergen_analysis_2017}). This implies that $(\Grad \pH, \vb)_\K=0$ and $(\Grad \rH, \cb)_\K=0$. Thus, $\pH$ and $\rH$ are elementwise constants.
Since $\rH = \rhH$ on $\Gho$, then $\rH$ is continuous on $\Omega$,
and since $\rH = 0$ on $\pOmega$,
we can conclude that $\rH = 0$, and hence $\rhH = 0$.
Finally, we use the result $\pH = \phH$ on $\Gho$ to
conclude that $\pH$ is continuous and a constant on $\Omega$.
Using the zero-average condition \eqref{eq:global_5_1} yields $\pH = 0$ and hence $\phH=0$.
\end{proof}

\subsection{Well-posedness of the local solver}\seclab{localwellposednessEHDG}
A key advantage of HDG or E-HDG methods is the decoupling
computation of the local variables $(\LbH,\ubH,\pH,\JbH,\bbH,\rH)$
and the global variables $({\ubhH, \phH, \bbhH,\rhH})$. In our E-HDG scheme, we first solve \eqnref{local} for local unknowns $(\LbH,\ubH,\pH,\JbH,\bbH,\rH)$ as a function of $({\ubhH, \phH,\bbhH,\rhH})$ (local solver), then these are substituted into
\eqref{global} on the mesh
skeleton to solve for the unknowns $({\ubhH, \phH,\bbhH,\rhH})$ (global solver). Finally, $( \LbH, \ubH, \allowbreak \pH, \allowbreak \JbH, \bbH, \rH )$ are computed with the local solver using $({\ubhH, \phH,\bbhH,\rhH})$, so well-posedness of the local solver is essential. It should be emphasized again that the result presented in this subsection is also valid for the HDG version in \cref{rema:HDG}.

\begin{theorem}\theolab{local}
Let $\alpha_1,\beta_1,\beta_2\in\R$ such that $\alpha_1 > \half \nor{\wb}_{L^\infty(\Omega)}$ and $\beta_1\opT+\beta_2\opN>0$. The local solver given by \eqnref{local} is well-posed. In other words, given $(\ubhH, \phH, \bbhH, \rhH, \gb,$ $ \fb, \rho_h)$, there exists 
a unique solution 
$(\LbH,\ubH,\pH,\JbH,\bbH,\rH)$ of the system.
\end{theorem}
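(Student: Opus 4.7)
The plan is to mirror the energy argument that established Theorem~\theoref{global}, now localized to a single element $\K$. Since the local system \eqnref{local} is square (equal number of equations and unknowns), it suffices to prove uniqueness: assuming the data $(\ubhH, \phH, \bbhH, \rhH, \gb, \fb)$ vanish, I will show that $(\LbH, \ubH, \pH, \JbH, \bbH, \rH) = \bs{0}$ on $\K$.

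First, I would pick test functions $(\Gb, \vb, \q, \Hb, \cb, \s) = (\LbH, \ubH, \pH, \JbH, \bbH, \rH)$ in \eqnref{local}, integrate by parts the first four terms of \eqref{eq:local_2_1} and the first term of \eqref{eq:local_5_1}, and add the six equations. The identities \eqnref{eq:int-by-parts} provide the cancellation between the Lorentz coupling in \eqref{eq:local_2_1} and the induction coupling in \eqref{eq:local_5_1}, exactly as in the proof of Theorem~\theoref{global}. Using $\Div \wb = 0$ to convert the convection contribution to the boundary and invoking the vanishing of the facet data to kill every cross term involving $\ubhH, \phH, \bbhH$, or $\rhH$, I expect to arrive at the local energy identity
\begin{equation*}
\Rey \nor{\LbH}_{0,\K}^2 + \frac{\Rm}{\kappa}\nor{\JbH}_{0,\K}^2
+ \LRa{\LRp{\alpha_1 + \tfrac{m}{2}}\ubH, \ubH}_\pK
+ \LRa{\LRp{\beta_1\opT + \beta_2\opN}\bbH, \bbH}_\pK = 0.
\end{equation*}
Because $\alpha_1 > \half\nor{\wb}_\Linfty$ and $\beta_1\opT + \beta_2\opN > 0$, this forces $\LbH = \bs{0}$ and $\JbH = \bs{0}$ on $\K$, together with $\ubH = \bs{0}$ and $\bbH = \bs{0}$ on $\pK$.

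Next, I would promote these vanishings to the interior of $\K$. From \eqref{eq:local_1_1} with $\LbH = \bs{0}$ (and $\ubhH = \bs{0}$ on $\pK$), integration by parts yields $\Grad \ubH = \bs{0}$ in $\K$, so $\ubH$ is constant, and the boundary condition gives $\ubH \equiv \bs{0}$. Applying the same strategy to \eqref{eq:local_4_1} and \eqref{eq:local_6_1} yields $\Curl \bbH = \bs{0}$ and $\Div \bbH = 0$ in $\K$. Since the simplex $\K$ is simply connected with a connected boundary and $\bbH = \bs{0}$ on $\pK$ (in particular $\bbH \cdot \n = 0$), the div-curl estimate \cite[Lemma~3.4]{girault_finite_1986} yields $\bbH \equiv \bs{0}$. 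With all of these quantities (and all facet data) vanishing, the flux $\FbhH^2 \cdot \n$ reduces to zero on $\pK$, so \eqref{eq:local_2_1} collapses to $\LRp{\pH, \Div \vb}_\K = 0$ for all $\vb \in \VbM(\K)$. On a simplex, $\Div : \LRs{\Poly_\k(\K)}^d \to \Poly_{\kbr}(\K) = \QbM(\K)$ is surjective, so choosing $\Div \vb = \pH$ produces $\pH = 0$. An identical argument applied to \eqref{eq:local_5_1} gives $\rH = 0$.

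The main obstacle I anticipate is the step concluding $\bbH \equiv \bs{0}$: on the whole of $\Omega$, Theorem~\theoref{global} uses a global div-curl-with-boundary control, while here I need the analogous statement on a single simplex. Luckily, $\bbH = \bs{0}$ on the full boundary $\pK$ is \emph{stronger} than the usual hypothesis $\bbH \cdot \n = 0$ required by \cite[Lemma~3.4]{girault_finite_1986}, and the topological hypothesis (simply connected with one boundary component) is trivially satisfied by a simplex. The remaining book-keeping — checking that the integration-by-parts and coupling cancellations of the global proof go through verbatim when restricted to one element — is routine because every manipulation used for Theorem~\theoref{global} is element-local to begin with.
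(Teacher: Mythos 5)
Your proposal is correct and follows essentially the same route as the paper: the same choice of test functions, the same integrations by parts leading to the local energy identity \eqref{eq:sum_energy2_1}, the same conclusion that $\LbH,\JbH$ vanish in $\K$ and $\ubH,\bbH$ vanish on $\pK$, and the same surjectivity-of-divergence argument for $\pH$ and $\rH$. The only difference is that you spell out explicitly the step the paper compresses into ``an argument similar to that in Section~\secref{wellposednessEHDG}'' (the gradient/div--curl reasoning promoting the boundary vanishing of $\ubH$ and $\bbH$ to the interior of the simplex), and your elaboration there is valid.
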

\begin{proof}
We show that
$(\ubhH, \phH, \bbhH, \rhH, \gb, \fb, \rho_h) = \bs{0}$ implies
${(\LbH,\ubH,\pH,\JbH,\bbH,\rH) = \bs{0}}$. To begin,
set $(\ubhH, \phH, \bbhH, \rhH, \gb, \fb, \rho_h) = \bs{0}$. Take $(\Gb,\vb,\q,\Jb,\cb,\s) = (\LbH,\ubH,\pH,\JbH,\bbH,\rH)$,
integrate by parts the first four terms in \eqref{eq:local_2_1} and
the first term in \eqref{eq:local_5_1}, and sum the resulting equations to get 
\begin{align}
  \label{eq:sum_energy2_1}
  &\Rey\nor{\LbH}_{0,\K}^2 
  + \LRa{ \LRp{\alpha_1 + \frac{m}{2}}\ubH, \ubH }_\pK \\
  &\quad + \frac{\Rm}{\kappa}\nor{\JbH}_{0,\K}^2  
    + \LRa{\LRp{\beta_1\opT+\beta_2\opN}\bbH,\bbH}_{\pK}
  = 0. \notag
\end{align}
Recalling $\alpha_1 > \half \nor{\wb}_\Linfty$ and $\beta_1\opT+\beta_2\opN>0$, we can yield
\begin{align*}
  \LbH = \bs{0}, \quad   \JbH = \bs{0}, \quad \text{ in } \K; \qquad 
  \ubH = \bs{0}, \quad   \bbH = \bs{0}, \quad \text{ on } \pK.
\end{align*}
Using an argument similar to that in Section \secref{wellposednessEHDG}
we can conclude $\ubH = \bbH = \bs{0}$ in $\K$. From
\eqref{eq:local_2_1} and \eqref{eq:local_5_1}, we have:
\begin{equation}
    -(\pH, \div\vb)_\K = 0,\quad\forall\vb\in\VbM\LRp{K},
\end{equation}
and
\begin{equation}
    -(\rH, \div\cb)_\K= 0,\quad\forall\cb\in\CbM\LRp{K},
\end{equation}
respectively. Since the space $\LRc{q : q=\div\vb,\,\forall\vb\in\VbM\LRp{K}}\supseteq\QbM\LRp{K}$ and\\
$\LRc{s : s=\div\cb,\,\forall\cb\in\CbM\LRp{K}}\supseteq\SbM\LRp{K}$, we can pick $\div\vb=\pH$ and $\div\cb=\rH$ and conclude that $\pH=\rH=0$ in $\K$.
\end{proof}

\subsection{Conservation properties of the E-HDG method}
\seclab{conservationEHDG}
In this section, we prove that our method is divergence-free and $\Hsp\LRp{div}$-conforming for both velocity (i.e., the exactness of mass conservation) and magnetic 
 (i.e., the absence of magnetic monopoles) fields. Same conclusions can be drawn for the HDG version in \cref{rema:HDG}.

\begin{propo}[divergence-free property and $\Hsp\LRp{div}$-conformity for the velocity field]\propolab{mass_conservation}
Let $\ubH\in\VbM$ and $\ubhH\in\MubM$ be the solution to the proposed E-HDG discretization \eqnref{local}-\eqnref{global_5_1}, then 
\begin{subequations}
    \begin{align}
        \eqnlab{div_u_in_L2}
        &\Div\LRp{\eval{\ubH}_{\K}}=0,
        && \forall\K\in\Omegah;\\
        \eqnlab{jump_u_o}
        & \eval{\jump{\ubH\cdot\n}}_{\e}=0,
        && \forall\e\in\Gho.\\
        \eqnlab{jump_u_b}
        & \ubH\cdot\n=\ubhH\cdot\n,
        && \text{on }\e\text{ and }\forall\e\in\Ghb.
    \end{align}
\end{subequations}
\end{propo}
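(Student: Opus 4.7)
The plan is to establish each of the three conclusions by testing the equations that encode the divergence constraint, namely the local equation \eqref{eq:local_3_1}, the global conservation constraint on interior faces in \eqref{global}, and the boundary constraint \eqnref{Constraint}. The key structural observation driving all three parts is that the third component of the numerical flux \eqref{eq:EHDGflux} is simply $\FbhH^3 \cdot \n = \ubH \cdot \n$, with no contribution from any facet unknown. The degree matching $\kbr = k-1$ between the pressure space $\QbM$ and the discrete divergence of $\ubH$ is what converts $L^2$-orthogonality into a pointwise statement.

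First, for \eqnref{div_u_in_L2}, I would integrate \eqref{eq:local_3_1} by parts on an arbitrary element $\K$. Substituting $\FbhH^3 \cdot \n = \ubH \cdot \n$, the skeleton contributions cancel and one is left with
\begin{equation*}
  \LRp{\Div \ubH, q}_\K = 0, \qquad \forall q \in \QbM(\K).
\end{equation*}
Since $\ubH|_\K \in \LRs{\Poly_k(\K)}^d$, we have $\Div \ubH|_\K \in \Poly_{k-1}(\K) = \Poly_{\kbr}(\K) = \QbM(\K)$, so the choice $q = \Div \ubH|_\K$ yields $\norm{\Div \ubH}_{0,\K}^2 = 0$, which is exactly \eqnref{div_u_in_L2}.

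Second, for \eqnref{jump_u_o}, I would take the third conservation equation from \eqref{global} on an interior face $e \in \Gho$. With $\FbhH^3 \cdot \n = \ubH \cdot \n$, this reads
\begin{equation*}
  \LRa{\jump{\ubH \cdot \n}, \rho}_e = 0, \qquad \forall \rho \in \RhoM(e).
\end{equation*}
Because $\ubH|_{\Km}$ and $\ubH|_{\Kp}$ both lie in $\LRs{\Poly_k}^d$, the jump $\jump{\ubH \cdot \n}|_e$ belongs to $\Poly_k(e) = \RhoM(e)$, so testing against itself gives $\jump{\ubH \cdot \n}|_e = 0$. For \eqnref{jump_u_b}, I would apply the same reasoning to the boundary constraint \eqnref{Constraint}: the difference $(\ubH - \ubhH)\cdot \n$ lies in $\Poly_k(e) = \RhoM(e)$ on each $e \in \Ghb$ (since $\ubhH \in \MubM \subset \LRs{\Poly_k(\Gh)}^d$), and testing the constraint against this difference forces it to vanish.

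There is no genuine obstacle here; the entire argument is a transparent consequence of (i) the choice $\FbhH^3 \cdot \n = \ubH \cdot \n$ in the numerical flux, (ii) the deliberate use of the one-degree-lower space $\QbM = \Poly_{\kbr}(\Omegah)$ for the pressure, and (iii) the fact that $\RhoM$ contains the full polynomial space $\Poly_k$ on each face. The only point requiring attention is bookkeeping the polynomial degrees to ensure that in each of the three test-function choices the candidate lies in the admissible space. Once this is checked, the three conclusions follow immediately without any auxiliary estimate.
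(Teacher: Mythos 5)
Your proposal is correct and follows essentially the same route as the paper's proof: integrate \eqref{eq:local_3_1} by parts and test against $\Div\ubH\in\QbM(\K)$, then test the third equation of \eqref{global} and the constraint \eqnref{Constraint} against the jump itself, which lies in $\RhoM(\e)$. The degree bookkeeping you highlight ($\kbr=k-1$ for the element test and $\Poly_k$ on flat simplicial faces for the trace test) is exactly the point the paper relies on.
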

\begin{proof}
    Apply integration-by-parts to Eq.  \eqnref{local_3_1}:
    \beq\eqnlab{mass_conservation_proof1}
        \LRp{\Div\LRp{\eval{\ubH}_{\K}}, \q}_\K = 0,\quad\forall\q\in\QbM(\K),\,\forall\K\in\Omegah.
    \eeq
    Since $\Div\LRp{\eval{\ubH}_{\K}}\in\QbM(\K)$, we can take $\q=\Div\LRp{\eval{\ubH}_{\K}}$, yielding $\norm{\Div\LRp{\eval{\ubH}_{\K}}}_{0,\K}^2=0$, which implies that $\Div\LRp{\eval{\ubH}_{\K}}=0$ for all $\K\in\Omegah$. 
    It follows from Eq. \eqref{global} that:
    \beq\eqnlab{mass_conservation_proof2}
        \LRa{\jump{\FbhH^3 \cdot \n},\rho}_\ed
        =\LRa{\jump{\ubH \cdot \n},\rho}_\ed
        =0,\quad\forall\rho\in\RhoM(\e),\,\forall\e\in\Gho.
    \eeq
    Since $\eval{\jump{\ubH \cdot \n}}_{\e}\in\RhoM(\e)$, we can take $\rho=\jump{\ubH \cdot \n}$, yielding $\norm{\jump{\ubH \cdot \n}}_{0,\e}^2=0$ for all $\e\in\Gho$. Thus, $\eval{\jump{\ubH\cdot\n}}_{\e}=0$ for all $\e\in\Gho$. The proof of Eq. \eqnref{jump_u_b} follows the same argument with the aid of Eq. \eqnref{Constraint}.  
\end{proof}

\begin{propo}[divergence-free property and $\Hsp\LRp{div}$-conformity for the magnetic field]\propolab{no_monopole}
Let $\bbH\in\CbM$ and $\bbhH\in\LambM$ be the solution to the proposed E-HDG discretization \eqnref{local}-\eqnref{global_5_1}, then 
\begin{subequations}
    \begin{align}
        \eqnlab{div_b_in_L2}
        &\Div\LRp{\eval{\bbH}_{\K}}=0,
        && \forall\K\in\Omegah;\\
        \eqnlab{jump_b_o}
        & \eval{\jump{\bbH\cdot\n}}_{\e}=0,
        && \forall\e\in\Gho.\\
        \eqnlab{jump_b_b}
        & \bbH\cdot\n=\bbhH\cdot\n,
        && \text{on }\e\text{ and }\forall\e\in\Ghb.
    \end{align}
\end{subequations}  
\end{propo}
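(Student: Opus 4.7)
The plan is to mirror the proof of Proposition \ref{propo:mass_conservation} almost verbatim, replacing $\ubH$ with $\bbH$, $\ubhH$ with $\bbhH$, and using equation \eqref{eq:local_6_1} together with the corresponding conservation constraint in \eqref{global} and the boundary constraint \eqnref{Constraint_b}. The three assertions \eqnref{div_b_in_L2}, \eqnref{jump_b_o}, \eqnref{jump_b_b} have the same algebraic structure as their velocity counterparts, so the same testing strategy should work.

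First I would establish \eqnref{div_b_in_L2}. Applying integration by parts to \eqref{eq:local_6_1} and using that $\FbhH^6 \cdot \n = \bbH \cdot \n$ makes the boundary terms cancel, leaving
\begin{equation*}
\LRp{\Div\bbH, s}_\K = 0, \qquad \forall s \in \SbM(\K),\, \forall \K \in \Omegah.
\end{equation*}
Since $\bbH|_\K \in [\Poly_k(\K)]^d$, we have $\Div(\bbH|_\K) \in \Poly_{k-1}(\K) = \SbM(\K)$ by our choice $\kbr = k-1$, so taking $s = \Div(\bbH|_\K)$ yields $\nor{\Div(\bbH|_\K)}_{0,\K}^2 = 0$.

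Next, for \eqnref{jump_b_o} I would use the sixth equation of \eqref{global}, namely $\LRa{\jump{\FbhH^6 \cdot \n}, \gamma}_\ed = 0$, which with $\FbhH^6 \cdot \n = \bbH \cdot \n$ reads
\begin{equation*}
\LRa{\jump{\bbH \cdot \n}, \gamma}_\ed = 0, \qquad \forall \gamma \in \GambM(\ed),\, \forall \ed \in \Gho.
\end{equation*}
Because $\eval{\jump{\bbH \cdot \n}}_{\ed} \in \Poly_k(\ed) = \GambM(\ed)$, selecting $\gamma = \jump{\bbH \cdot \n}$ closes the argument. Finally, \eqnref{jump_b_b} follows immediately from the boundary constraint \eqnref{Constraint_b} in the same way: rewriting it as $\LRa{(\bbH - \bbhH)\cdot\n, \gamma}_\ed = 0$ and noting that $(\bbH - \bbhH)\cdot\n \in \GambM(\ed)$, we test against $\gamma = (\bbH - \bbhH)\cdot\n$.

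I do not anticipate any real obstacle here, since the argument is formally identical to the one for the velocity field; the only thing to double-check is that the polynomial degrees of $\CbM$, $\SbM$, and $\GambM$ match up so that the choice of test function inside the test space is legitimate, which is guaranteed by the definitions $\CbM = [\Poly_k(\Omegah)]^d$, $\SbM = \Poly_{\kbr}(\Omegah)$ with $\kbr = k-1$, and $\GambM = [\Poly_k(\Gh)]$. No appeal to the $\Hsp(curl)$ structure or the Nédélec-space argument used in \cref{theo:global} is needed for this proposition.
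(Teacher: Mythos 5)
Your proposal is correct and follows exactly the route the paper takes: the paper's own proof of Proposition \proporef{no_monopole} simply states that it follows the same argument as Proposition \proporef{mass_conservation}, and your write-up is precisely that argument transcribed to the magnetic field, with the degree bookkeeping ($\Div(\bbH|_\K)\in\Poly_{k-1}(\K)=\SbM(\K)$ and $(\bbH-\bbhH)\cdot\n\in\Poly_k(\e)=\GambM(\e)$) correctly verified. Nothing further is needed.
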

\begin{proof}
    The result holds by directly following the similar argument as the proof of Proposition \proporef{mass_conservation}.
\end{proof}

\begin{rema}
As can be seen, both Propositions \proporef{mass_conservation} and \proporef{no_monopole} also hold true for the nonlinear case. That is, they are still valid if $\wb$ and $\db$ are replaced by $\ubH$ and $\bbH$ in \eqnref{local}-\eqref{global}.
\end{rema}


\section{Numerical Results}\seclab{numerical_results}
A nonlinear solver can be constructed through the employment of the linear E-HDG (or HDG in \cref{rema:HDG}) scheme given by \eqnref{local}-\eqnref{global_5_1} in a  Picard iteration. If we consider the linearized MHD equations \eqnref{mhdlin} to be a linear map $\LRp{\wb,\db}\mapsto\LRp{\ub,\bb}$, then any fixed point of that mapping is a solution to the nonlinear incompressible viso-resistive MHD equations \eqnref{mhd_nonlin}. With this in mind, we can use the general linearized incompressible MHD E-HDG scheme \eqnref{local}--\eqnref{global_5_1} in an iterative manner to numerically solve the nonlinear incompressible MHD equations. The convergence of such an interaction can be consulted from \cite{muralikrishnan_multilevel_2023}. Let the superscript denote an iteration number, we set the initial guess $\ubH^0=\bs{0}$ and $\bbH^0=\bs{0}$ and the stopping criterion 
\beq
\max\LRc{\frac{\norm{\ubH^i-\ubH^{i-1}}_0}{\norm{\ubH^{i}}_0},\frac{\norm{\bbH^i-\bbH^{i-1}}_0}{\norm{\bbH^{i}}_0}}<\varepsilon,
\eeq
where $\varepsilon$ is a user-defined tolerance. In particular, we take $\varepsilon=O(10^{-10})$ in all numerical experiments for the nonlinear examples.

In this section, a series of numerical experiments is presented to illustrate the capability of the E-HDG method in both linear and nonlinear scenarios. First, a comparison is drawn between the proposed HDG and E-HDG methods regarding the DOFs and the actual computational time (wall-clock time). Then the order of accuracy for the linear scheme is numerically investigated by applying the E-HDG method to two- and three-dimensional problems with smooth solutions. The convergence of a two-dimensional singular problem, defined on a nonconvex domain, is also presented. Moreover, the pressure-robustness of our method is numerically demonstrated by perturbing smooth manufactured solutions. Finally, the order of accuracy for the nonlinear solver, where the linear scheme is integrated into a Picard iteration, is studied through two- and three-dimensional problems featuring smooth solutions, including a stationary liquid duct flow in plasma physics and manufactured solutions. It should be emphasized that the divergence-free property and $\Hsp{}(div)$-conformity still obviously hold for our nonlinear solver and will be validated through numerical demonstrations.

Our methods\textemdash both HDG and E-HDG)\textemdash are implemented based on the Modular Finite Element Method (MFEM) library \cite{mfem}. Furthermore, we use the direct solver of MUMPS \cite{MUMPS2001,MUMPS2006} through PETSc \cite{balay_efficient_1997,balay_petsctao_2023} to solve the systems of linear equations composed by the Schur complement (or static condensation) resulting from the discretization \eqnref{local}--\eqnref{global_5_1}. In addition, we take stabilization parameters $\alpha_1\in\LRc{125,1000}$ and $\beta_1=\beta_2\in\LRc{1,100,1000}$. Although it is proved that the well-posedness of both local and global solvers can be guaranteed by the conditions $\alpha_1>\half \nor{\wb}_\Linfty$, and $\beta_1\opT+\beta_2\opN>0$, we numerical found that small increments in the values of the stabilization parameters can improve the order of accuracy. However, large values of the parameters can cause serious adverse effects in convergence. 

\begin{rema}
    In this work, the auxiliary variables $\LbH$ and $\JbH$ can be locally eliminated through local Eq. \eqnref{local_1_1} and \eqnref{local_4_1}, respectively. Since the numerical flux defined in \eqnref{local_1_1} only associates with a single global variable $\ubhH$, the local variable $\LbH$ can be expressed by $\ubH$ and $\ubhH$,
    thanks to the block diagonal structure endowed by the term  $\Rey\LRp{\LbH, \Gb}_\K$. A similar procedure can also be followed to express $\JbH$ by $\bbH$ and $\bbhH$ with the help of Eq. \eqnref{local_4_1}.
    Through the elimination, the assembly operation (construct the local Schur complement and allocate it to the global matrix) and reconstruction operation (solve for the local variables with the given global variables) can be computationally cheaper.
\end{rema}
\begin{rema}
    Even though the well-posedness of the method is proved in Theorem \theoref{global}, the inclusion of the pressure constraint given in \eqnref{global_5_1} is not straightforward to implement. Note that the discretization is ill-posed without the pressure constraint, and the local variable $\pH$ and global variable $\phH$ can only be determined up to constant. Such a singular system can still be handled by Krylov type of iterative solver without encountering breakdowns \cite{benzi_golub_liesen_2005, Howard2014}. However, in order to use a direct solver, an additional treatment is necessary. In this paper, we restrict one DOF of the global variable $\phH$ to be zero such that both $\pH$ and $\phH$ can be determined. Once the system is solved by the direct solver, we then enforce the pressure constraint \eqnref{global_5_1} by post-processing.            
\end{rema}
\begin{rema}
    All $\Lsp^{\infty}$-norms are computed as the maximum norm of the function values evaluated on all elements using a set of quadrature points with the order of accuracy $2k+3$.
\end{rema}

\subsection{Computational Performance of the proposed HDG and E-HDG methods}\seclab{performance}
In this subsection, we discuss the computational costs of the HDG and the E-HDG methods in which the discretization is based on \eqnref{local}-\eqnref{global_5_1} but with different trace approximation spaces (see Remark \remaref{HDG}). Table \tabref{DOF} summarizes the DOFs needed by the HDG and E-HDG methods, and Table \tabref{time} summarizes the corresponding computational time. 
The values presented in each cell of Table \tabref{time} denote the total wall-clock time spent by the entire process. This includes the three main tasks: the assembly (locally constructing the Schur complement and allocating it to the global matrix), the solution of the system of equations (obtaining the global variables), and the local reconstruction (recovering the local variables from the given global variables through the solution of the local equations \eqnref{local}). The measurements are based on the average of five runs, with each run recording the maximal time among all MPI processes.  

The reduction in DOFs becomes notably more pronounced for three-dimensional cases, particularly on finer meshes. For example, applying the E-HDG method with $k=1$ on a mesh comprising 24576 elements results in a maximum DOF reduction of up to 72.58\%. This reduction is directly reflected in the computational time, see Table \tabref{time}, where a 47.74\% saving in total computational time is achieved. However, on coarser meshes, despite substantial reductions in DOFs, the corresponding savings in computational time are limited (perhaps due to the efficiency of MUMPS \cite{MUMPS2001,MUMPS2006}). This discrepancy can be explained through Table \tabref{solver_time} and Table \tabref{other_time}. The former delineates the wall-clock time spent by the linear solver, while the latter encapsulates the times allocated to the assembly and the local reconstruction tasks. Analysis of Table \tabref{other_time} reveals that the times devoted to assembly and local reconstruction remain nearly constant irrespective of mesh refinement, approximation degree, or dimension. On the other hand, the reduction trend in total computational time presented in Table \tabref{time} aligns closely with the computational time required by the linear solver detailed in Table \tabref{solver_time}. This alignment suggests that the advantage of downsizing DOFs may become more substantial when the linear solver time dominates the overall computational time. In essence, while reducing DOFs may not significantly impact the assembly and reconstruction times for the HDG and E-HDG methods, it notably enhances the efficiency of the linear solver in the E-HDG method for larger problems.

In addition to the reduction on computational time, reducing DOFs also adds advantages in memory management and this can be seen in Table \tabref{time}. 
On the three-dimensional mesh consisting of 24576 elements, the linear solver fails when using the HDG methods along with $k=3$ and $k=4$ due to insufficient memory\footnote{Such breakdown can be avoided by using an iterative solver. However, the design of a preconditioned iterative solver is beyond the scope of this paper, and hence we will pursue this in our future work (see also our previous work in \cite{muralikrishnan_multilevel_2023}).}. In contrast, such challenges can be overcome by using the E-HDG method, where the linear solver remains operational under identical circumstances.  

\begin{table}[!htb]
\centering
\begin{subtable}[t]{0.49\textwidth}
\resizebox{\textwidth}{!}{
\begin{tabular}{|c|cccc|}
\multicolumn{5}{c}{\textbf{Two-dimension}} \\
\hline
\hline
\multicolumn{5}{c}{DOFs used in the HDG method} \\
\hline
elem. \#&
$k=1$ & 
$k=2$ & 
$k=3$ & 
$k=4$ \\
\hline
2 & 60 &	90 &	120 &	150 	\\ 
8 & 192 &	288 &	384 &	480 	\\ 
32 & 672 &	1.01E+03 &	1.34E+03 &	1.68E+03 	\\ 
128 & 2.50E+03 &	3.74E+03 &	4.99E+03 &	6.24E+03 	\\ 
512 & 9.60E+03 &	1.44E+04 &	1.92E+04 &	2.40E+04	\\
\hline
\multicolumn{5}{c}{DOFs used in the E-HDG method} \\
\hline
elem. \#&
$k=1$ & 
$k=2$ & 
$k=3$ & 
$k=4$ \\
\hline
2 & 36 &	66 &	96 &	126 	\\ 
8 & 100 &	196 &	292 &	388	\\ 
32 & 324 &	660 &	996 &	1.33E+03 	\\ 
128 & 1.16E+03 &	2.40E+03 &	3.65E+03 &	4.90E+03 	\\ 
512 & 4.36E+03 &	9.16E+03 &	1.40E+04 &	1.88E+04 	\\ 
\hline
\multicolumn{5}{c}{Percentage of reduction in DOFs (\%)} \\
\hline
elem. \# &
$k=1$ & 
$k=2$ & 
$k=3$ & 
$k=4$ \\
\hline
2 & -40.00 &	-26.67 &	-20.00 &	-16.00 	\\ 
8 & -47.92 &	-31.94 &	-23.96 &	-19.17 	\\ 
32 & -51.79 &	-34.52 &	-25.89 &	-20.71 	\\ 
128 & -53.69 &	-35.79 &	-26.84 &	-21.47 	\\ 
512 &- 54.62 &	-36.42 &	-27.31 &	-21.85 	\\ 
\hline
\end{tabular}
}
\caption*{}
\end{subtable}
\hspace{\fill}
\begin{subtable}[t]{0.49\textwidth}
\resizebox{\textwidth}{!}{
\begin{tabular}{|c|cccc|}
\multicolumn{5}{c}{\textbf{Three-dimension}} \\
\hline
\hline
\multicolumn{5}{c}{DOFs used in the HDG method} \\
\hline
elem. \#&
$k=1$ & 
$k=2$ & 
$k=3$ & 
$k=4$ \\
\hline
6 & 432 &	864 &	1.44E+03 &	2.16E+03 	\\ 
48& 2.88E+03 &	5.76E+03 &	9.60E+03 &	1.44E+04 	\\ 
364& 2.07E+04 &	4.15E+04 &	6.91E+04 &	1.04E+05 	\\ 
3072& 1.57E+05 &	3.13E+05 &	5.22E+05 &	7.83E+05 	\\ 
\textcolor{black}{24576}& \textcolor{black}{\textbf{1.22E+06}} &	\textcolor{black}{2.43E+06} &	\textcolor{black}{4.06E+06} &	\textcolor{black}{6.08E+06} 	\\ 
\hline
\multicolumn{5}{c}{DOFs used in the E-HDG method} \\
\hline
elem. \#&
$k=1$ & 
$k=2$ & 
$k=3$ & 
$k=4$ \\
\hline
6& 156 &	378 &	744 &	1.25E+03 	\\ 
48& 882 &	2.19E+03 &	4.46E+03 &	7.69E+03 	\\ 
364& 5.93E+03 &	1.47E+04 &	3.05E+04 &	5.31E+04 	\\ 
3072& 4.35E+04 &	1.08E+05 &	2.24E+05 &	3.93E+05 	\\ 
\textcolor{black}{24576}& \textcolor{black}{\textbf{3.34E+05}} &	\textcolor{black}{8.24E+05} &	\textcolor{black}{1.72E+06} &	\textcolor{black}{3.02E+06} 	\\ 
\hline
\multicolumn{5}{c}{Percentage of reduction in DOFs (\%)} \\
\hline
elem. \#&
$k=1$ & 
$k=2$ & 
$k=3$ & 
$k=4$ \\
\hline
6&    -63.89 &	-56.25 &	-48.33 &	-41.94 	\\ 
48&   -69.38 &	-61.98 &	-53.56 &	-46.62 	\\ 
364&  -71.38 &	-64.45 &	-55.93 &	-48.79 	\\ 
3072& -72.21 &	-65.59 &	-57.05 &	-49.83 	\\ 
\textcolor{black}{24576}& \textcolor{black}{\textbf{-72.58}} &	\textcolor{black}{-66.14} &	\textcolor{black}{-57.59} &	\textcolor{black}{-50.33} 	\\ 
\hline
\end{tabular}
}
\caption*{}
\end{subtable}
\vspace{-20pt}
\caption{The summary of DOFs used in E-HDG and HDG 
discretizations given by \eqnref{local}-\eqnref{global_5_1}. Note that $k$ denotes the degree of approximation and ``elem. \#" indicates the number of elements used in a given mesh.}\tablab{DOF}
\end{table}

\begin{table}[!htb]
\centering
\begin{subtable}[t]{0.49\textwidth}
\resizebox{\textwidth}{!}{
\begin{tabular}{|p{1.3cm}|p{1.2cm}p{1.2cm}p{1.2cm}p{1.2cm}|}
\multicolumn{5}{c}{\textbf{Two-dimension}} \\
\hline
\hline
\multicolumn{5}{c}{Total number of MPI processes}\\
\hline
elem. \#&
$k=1$ & 
$k=2$ & 
$k=3$ & 
$k=4$ \\
\hline
2   & 1 & 1 & 1 & 1 \\ 
8   & 1 & 1 & 1 & 1 \\ 
32  & 2 & 2 & 2 & 2 \\
128 & 2 & 2 & 2 & 2 \\
512 & 4 & 4 & 4 & 4  \\
\hline
\multicolumn{5}{c}{Total wall-clock time by the HDG method (sec)}\\
\hline
elem. \#&
$k=1$ & 
$k=2$ & 
$k=3$ & 
$k=4$ \\
\hline
2   & 0.02 & 0.03 & 0.07 & 0.18 \\ 
8   & 0.03 & 0.09 & 0.25 & 0.69 \\ 
32  & 0.05 & 0.18 & 0.50 & 1.39 \\
128 & 0.15 & 0.67 & 1.98 & 5.54 \\
512 & 0.31 & 1.39 & 4.01 & 11.29  \\
\hline
\multicolumn{5}{c}{Total wall-clock time by the E-HDG method (sec)} \\
\hline
elem. \#&
$k=1$ & 
$k=2$ & 
$k=3$ & 
$k=4$ \\
\hline
2   & 0.01 & 0.03 & 0.07 & 0.18 \\ 
8   & 0.02 & 0.09 & 0.25 & 0.69 \\ 
32  & 0.05 & 0.18 & 0.50 & 1.39 \\
128 & 0.14 & 0.66 & 1.96 & 5.51 \\
512 & 0.27 & 1.36 & 3.98 & 11.21 \\
\hline
\multicolumn{5}{c}{Reduction in total computational time (\%)} \\
\hline
elem. \# &
$k=1$ & 
$k=2$ & 
$k=3$ & 
$k=4$ \\
\hline
2	&	-50.00	&	0.00	&	0.00	&	0.00\\
8	&	-33.33	&	0.00	&	0.00	&	0.00\\
32	&	0.00	&	0.00	&	0.00	&	0.00\\
128	&	-6.67	&	-1.49	&	-1.01	&	-0.54\\
512	&	-12.90	&	-2.16	&	-0.75	&	-0.71\\
\hline
\end{tabular}
}
\caption*{}
\end{subtable}
\hspace{\fill}
\begin{subtable}[t]{0.49\textwidth}
\resizebox{\textwidth}{!}{
\begin{tabular}{|p{1.3cm}|p{1.2cm}p{1.2cm}p{1.2cm}p{1.2cm}|}
\multicolumn{5}{c}{\textbf{Three-dimension}} \\
\hline
\hline
\multicolumn{5}{c}{Total number of MPI processes}\\
\hline
elem. \#&
$k=1$ & 
$k=2$ & 
$k=3$ & 
$k=4$ \\
\hline
6     & 1 & 1 & 1 & 2 \\ 
48    & 1 & 2 & 2 & 2 \\ 
364   & 2 & 4 & 4 & 8 \\
3072  & 2 & 8 & 8 & 16 \\
24576 & \textbf{4} & 16 & 16 & 32  \\
\hline
\multicolumn{5}{c}{Total wall-clock time by the HDG method (sec)} \\
\hline
elem. \#&
$k=1$ & 
$k=2$ & 
$k=3$ & 
$k=4$ \\
\hline
6     & 0.17   & 1.81   & 11.06  & 25.04\\ 
48    & 1.17   & 7.04   & 44.79  & 197.09\\ 
364   & 4.96   & 29.47  & 182.88 & 412.77\\
3072  & 43.90  & 127.43 & 783.20 & 1726.52\\
\textcolor{black}{24576} & \textcolor{black}{\textbf{303.48}} & \textcolor{black}{879.87} & \textcolor{black}{-}      & \textcolor{black}{-}\\
\hline
\multicolumn{5}{c}{Total wall-clock time by the E-HDG method (sec)} \\
\hline
elem. \#&
$k=1$ & 
$k=2$ & 
$k=3$ & 
$k=4$ \\
\hline
6     & 0.16   & 1.77   & 11.03   & 25.05	 \\ 
48    & 1.14   & 6.93   & 45.03   & 198.85 	 \\ 
364   & 4.65   & 28.49  & 182.44  & 404.38   \\
3072  & 37.91  & 117.19 & 739.76  & 1650.05   \\
\textcolor{black}{24576} & \textcolor{black}{\textbf{158.59}} & \textcolor{black}{522.89} & \textcolor{black}{3341.03} & \textcolor{black}{7473.77}   \\
\hline
\multicolumn{5}{c}{Reduction in total computational time (\%)} \\
\hline
elem. \#&
$k=1$ & 
$k=2$ & 
$k=3$ & 
$k=4$ \\
\hline
6	&	-5.88	&	-2.21	&	-0.27	&	0.04\\
48	&	-2.56	&	-1.56	&	0.54	&	0.89\\
364	&	-6.25	&	-3.33	&	-0.24	&	-2.03\\
3072	&	-13.64	&	-8.04	&	-5.55	&	-4.43\\
\textcolor{black}{24576}	&	\textcolor{black}{\textbf{-47.74}}	&	\textcolor{black}{-40.57}	&	\textcolor{black}{-}	&	\textcolor{black}{-}\\
\hline
\end{tabular}
}
\caption*{}
\end{subtable}
\vspace{-20pt}
\caption{The summary of total computational time (the averaged maximum of wall-clock time over five runs of identical setting, among all MPI processes) taken by E-HDG and HDG methods to solve two- and three-dimensional problems with the discretization given in \eqnref{local}-\eqnref{global_5_1}. The two-dimensional problem is the one presented in Section \secref{lin_smooth_2d} with $\Rey=\Rm=1$ and the three-dimensional problem is the one presented in Section \secref{lin_smooth_3d} with $\Rey=\Rm=1$. Note that $k$ denotes the degree of approximation and ``elem. \#" indicates the number of elements used in a given mesh.}\tablab{time}
\end{table}

\begin{table}[!htb]
\centering
\begin{subtable}[t]{0.49\textwidth}
\resizebox{\textwidth}{!}{
\begin{tabular}{|p{1.5cm}|p{1.5cm}p{1.5cm}p{1.5cm}p{1.5cm}|}
\multicolumn{5}{c}{\textbf{Two-dimension}} \\
\hline
\hline
\multicolumn{5}{c}{Total number of MPI processes}\\
\hline
elem. \#&
$k=1$ & 
$k=2$ & 
$k=3$ & 
$k=4$ \\
\hline
2   & 1 & 1 & 1 & 1 \\ 
8   & 1 & 1 & 1 & 1 \\ 
32  & 2 & 2 & 2 & 2 \\
128 & 2 & 2 & 2 & 2 \\
512 & 4 & 4 & 4 & 4  \\
\hline
\multicolumn{5}{c}{Wall-clock time of linear solver}\\
\multicolumn{5}{c}{in the HDG method (sec)}\\
\hline
elem. \#&
$k=1$ & 
$k=2$ & 
$k=3$ & 
$k=4$ \\
\hline
2	&	0.01	&	0.01	&	0.01	&	0.01\\
8	&	0.01	&	0.01	&	0.01	&	0.01\\
32	&	0.01	&	0.02	&	0.02	&	0.02\\
128	&	0.03	&	0.04	&	0.06	&	0.09\\
512	&	0.06	&	0.11	&	0.17	&	0.25\\
\hline
\multicolumn{5}{c}{Wall-clock time of linear solver}\\
\multicolumn{5}{c}{in the E-HDG method (sec)}\\
\hline
elem. \#&
$k=1$ & 
$k=2$ & 
$k=3$ & 
$k=4$ \\
\hline
2	&	0.01	&	0.01	&	0.01	&	0.01\\
8	&	0.01	&	0.01	&	0.01	&	0.01\\
32	&	0.01	&	0.01	&	0.02	&	0.02\\
128	&	0.02	&	0.03	&	0.05	&	0.07\\
512	&	0.03	&	0.08	&	0.13	&	0.20\\
\hline
\multicolumn{5}{c}{Reduction in computational time of linear solver (\%)} \\
\hline
elem. \# &
$k=1$ & 
$k=2$ & 
$k=3$ & 
$k=4$ \\
\hline
2	&	0.00	&	0.00	&	0.00	&	0.00\\
8	&	0.00	&	0.00	&	0.00	&	0.00\\
32	&	0.00	&	-50.00	&	0.00	&	0.00\\
128	&	-33.33	&	-25.00	&	-16.67	&	-22.22\\
512	&	-50.00	&	-27.27	&	-23.53	&	-20.00\\
\hline
\end{tabular}
}
\caption*{}
\end{subtable}
\hspace{\fill}
\begin{subtable}[t]{0.49\textwidth}
\resizebox{\textwidth}{!}{
\begin{tabular}{|p{1.5cm}|p{1.5cm}p{1.5cm}p{1.5cm}p{1.5cm}|}
\multicolumn{5}{c}{\textbf{Three-dimension}} \\
\hline
\hline
\multicolumn{5}{c}{Total number of MPI processes}\\
\hline
elem. \#&
$k=1$ & 
$k=2$ & 
$k=3$ & 
$k=4$ \\
\hline
6     & 1 & 1 & 1 & 2 \\ 
48    & 1 & 2 & 2 & 2 \\ 
364   & 2 & 4 & 4 & 8 \\
3072  & 2 & 8 & 8 & 16 \\
24576 & \textbf{4} & 16 & 16 & 32  \\
\hline
\multicolumn{5}{c}{Wall-clock time of linear solver}\\
\multicolumn{5}{c}{in the HDG method (sec)}\\
\hline
elem. \#&
$k=1$ & 
$k=2$ & 
$k=3$ & 
$k=4$ \\
\hline
6	&	0.01	&	0.05	&	0.11	&	0.42\\
48	&	0.04	&	0.13	&	0.83	&	0.98\\
364	&	0.41	&	1.26	&	3.51	&	5.93\\
3072	&	7.32	&	14.9	&	61.91	&	106.37\\
24576	&	\textbf{156.51}	&	419.73	&	-	&	-\\
\hline
\multicolumn{5}{c}{Wall-clock time of linear solver}\\
\multicolumn{5}{c}{in the E-HDG method (sec)}\\
\hline
elem. \#&
$k=1$ & 
$k=2$ & 
$k=3$ & 
$k=4$ \\
\hline
6	&	0.01	&	0.03	&	0.09	&	0.17\\
48	&	0.02	&	0.07	&	1.2	&	0.55\\
364	&	0.09	&	0.41	&	1.17	&	2.75\\
3072	&	0.97	&	3.52	&	16.48	&	37.67\\
24576	&	\textbf{13.08}	&	66.88	&	414.78	&	940.11\\
\hline
\multicolumn{5}{c}{Reduction in computational time of linear solver (\%)} \\
\hline
elem. \#&
$k=1$ & 
$k=2$ & 
$k=3$ & 
$k=4$ \\
\hline
6	&	0.00	&	-40.00	&	-18.18	&	-59.52\\
48	&	-50.00	&	-46.15	&	44.58	&	-43.88\\
364	&	-78.05	&	-67.46	&	-66.67	&	-53.63\\
3072	&	-86.75	&	-76.38	&	-73.38	&	-64.59\\
24576	&	\textbf{-91.64}	&	-84.07	&	-	&	-\\
\hline
\end{tabular}
}
\caption*{}
\end{subtable}
\vspace{-20pt}
\caption{The summary of computational time (the averaged maximum of wall-clock time over five runs of identical setting, among all MPI processes) taken by the linear solver for solving the two- and three-dimensional problems using E-HDG and HDG methods with the discretization given in \eqnref{local}-\eqnref{global_5_1}. The two-dimensional problem is the one presented in Section \secref{lin_smooth_2d} with $\Rey=\Rm=1$ and the three-dimensional problem is the one presented in Section \secref{lin_smooth_3d} with $\Rey=\Rm=1$. Note that $k$ denotes the degree of approximation and "elem. \#" indicates the number of elements used in a given mesh.}\tablab{solver_time}
\end{table}

\begin{table}[!htb]
\centering
\begin{subtable}[t]{0.49\textwidth}
\resizebox{\textwidth}{!}{
\begin{tabular}{|p{1.5cm}|p{1.5cm}p{1.5cm}p{1.5cm}p{1.5cm}|}
\multicolumn{5}{c}{\textbf{Two-dimension}} \\
\hline
\hline
\multicolumn{5}{c}{Total number of MPI processes}\\
\hline
elem. \#&
$k=1$ & 
$k=2$ & 
$k=3$ & 
$k=4$ \\
\hline
2   & 1 & 1 & 1 & 1 \\ 
8   & 1 & 1 & 1 & 1 \\ 
32  & 2 & 2 & 2 & 2 \\
128 & 2 & 2 & 2 & 2 \\
512 & 4 & 4 & 4 & 4  \\
\hline
\multicolumn{5}{c}{Wall-clock time of assembly \& reconstruction}\\
\multicolumn{5}{c}{in the HDG method (sec)}\\
\hline
elem. \#&
$k=1$ & 
$k=2$ & 
$k=3$ & 
$k=4$ \\
\hline
2	&	0.01	&	0.02	&	0.06	&	0.17\\
8	&	0.02	&	0.08	&	0.24	&	0.68\\
32	&	0.03	&	0.16	&	0.48	&	1.37\\
128	&	0.12	&	0.63	&	1.92	&	5.45\\
512	&	0.25	&	1.28	&	3.84	&	11.04\\
\hline
\multicolumn{5}{c}{Wall-clock time of assembly \& reconstruction}\\
\multicolumn{5}{c}{in the E-HDG method (sec)}\\
\hline
elem. \#&
$k=1$ & 
$k=2$ & 
$k=3$ & 
$k=4$ \\
\hline
2	&	0.01	&	0.03	&	0.06	&	0.17\\
8	&	0.02	&	0.08	&	0.24	&	0.68\\
32	&	0.03	&	0.16	&	0.48	&	1.37\\
128	&	0.12	&	0.63	&	1.91	&	5.43\\
512	&	0.24	&	1.28	&	3.85	&	11.01\\
\hline
\multicolumn{5}{c}{Reduction in computational time}\\
\multicolumn{5}{c}{of assembly \& reconstruction (\%)} \\
\hline
elem. \# &
$k=1$ & 
$k=2$ & 
$k=3$ & 
$k=4$ \\
\hline
2	&	0.00	&	50.00	&	0.00	&	0.00\\
8	&	0.00	&	0.00	&	0.00	&	0.00\\
32	&	0.00	&	0.00	&	0.00	&	0.00\\
128	&	0.00	&	0.00	&	-0.52	&	-0.37\\
512	&	-4.00	&	0.00	&	0.26	&	-0.27\\
\hline
\end{tabular}
}
\caption*{}
\end{subtable}
\hspace{\fill}
\begin{subtable}[t]{0.49\textwidth}
\resizebox{\textwidth}{!}{
\begin{tabular}{|p{1.5cm}|p{1.5cm}p{1.5cm}p{1.5cm}p{1.5cm}|}
\multicolumn{5}{c}{\textbf{Three-dimension}} \\
\hline
\hline
\multicolumn{5}{c}{Total number of MPI processes}\\
\hline
elem. \#&
$k=1$ & 
$k=2$ & 
$k=3$ & 
$k=4$ \\
\hline
6     & 1 & 1 & 1 & 2 \\ 
48    & 1 & 2 & 2 & 2 \\ 
364   & 2 & 4 & 4 & 8 \\
3072  & 2 & 8 & 8 & 16 \\
24576 & \textbf{4} & 16 & 16 & 32  \\
\hline
\multicolumn{5}{c}{Wall-clock time of assembly \& reconstruction}\\
\multicolumn{5}{c}{in the HDG method (sec)}\\
\hline
elem. \#&
$k=1$ & 
$k=2$ & 
$k=3$ & 
$k=4$ \\
\hline
6	&	0.15	&	1.77	&	10.95	&	24.62\\
48	&	1.13	&	6.91	&	43.96	&	196.12\\
364	&	4.54	&	28.21	&	179.37	&	406.84\\
3072	&	36.58	&	112.53	&	721.29	&	1620.16\\
24576	&	\textbf{146.97}	&	460.14	&	-	&	-\\
\hline
\multicolumn{5}{c}{Wall-clock time of assembly \& reconstruction}\\
\multicolumn{5}{c}{in the E-HDG method (sec)}\\
\hline
elem. \#&
$k=1$ & 
$k=2$ & 
$k=3$ & 
$k=4$ \\
\hline
6	&	0.15	&	1.74	&	10.93	&	24.89\\
48	&	1.12	&	6.86	&	43.82	&	198.3\\
364	&	4.56	&	28.08	&	181.27	&	401.64\\
3072	&	36.94	&	113.67	&	723.29	&	1612.38\\
24576	&	\textbf{145.51}	&	456.01	&	2926.25	&	6533.66\\
\hline
\multicolumn{5}{c}{Reduction in computational time}\\
\multicolumn{5}{c}{of assembly \& reconstruction (\%)} \\
\hline
elem. \#&
$k=1$ & 
$k=2$ & 
$k=3$ & 
$k=4$ \\
\hline
6	&	0.00	&	-1.69	&	-0.18	&	1.10\\
48	&	-0.88	&	-0.72	&	-0.32	&	1.11\\
364	&	0.44	&	-0.46	&	1.06	&	-1.28\\
3072	&	0.98	&	1.01	&	0.28	&	-0.48\\
24576	&	\textbf{-0.99}	&	-0.90	&	-	&	-\\
\hline
\end{tabular}
}
\caption*{}
\end{subtable}
\vspace{-20pt}
\caption{The summary of computational time (the averaged maximum of wall-clock time over five runs of identical setting, among all MPI processes) taken by assembly execution and local reconstruction for solving the two- and three-dimensional problems using E-HDG and HDG methods with the discretization given in \eqnref{local}-\eqnref{global_5_1}. The two-dimensional problem is the one presented in Section \secref{lin_smooth_2d} with $\Rey=\Rm=1$ and the three-dimensional problem is the one presented in Section \secref{lin_smooth_3d} with $\Rey=\Rm=1$. Note that $k$ denotes the degree of approximation and ``elem. \#" indicates the number of elements used in a given mesh.}\tablab{other_time}
\end{table}


\subsection{Linear examples}\seclab{linear}
A series of linear numerical experiments is carried out to verify our method in this subsection. We first analyze the accuracy and the convergence in two dimensions for the case of a smooth manufactured solution. In addition, the pressure robustness of our method is also tested. We then analyze the accuracy and convergence for a singular manufactured solution. Finally, we perform the analysis of the accuracy, convergence, and pressure robustness  for 
a smooth manufactured solution in three dimensions.

\subsubsection{Two-dimensional smooth manufactured solution}\seclab{lin_smooth_2d}
This example illustrates the convergence of the E-HDG scheme applied to a problem posed on the square domain $\Omega=(0,1)\times(0,1)$. In particular, the two-dimensional
manufactured vortex solution considered in \cite{gleason_divergence-conforming_2022} is adopted. We take $\Rey=\Rm\in\LRc{1,1000}$ and $\kappa=1$, and set $\gb$ and $\fb$ such that the manufactured solution for \eqnref{mhdlin-first}-\eqref{compatibility} is
\begin{subequations}\eqnlab{smooth_2d}
\begin{align}
    \begin{split}
        \bu = 
        \begin{pmatrix}
        -2x^2e^x\LRp{-y^2+y}\LRp{2y-1}\LRp{x-1}^2,\\
        -xy^2e^x\LRp{x\LRp{x+3}-2}\LRp{x-1}\LRp{y-1}^2
        \end{pmatrix},
    \end{split}\\
    \begin{split}
        \bb = 
        \begin{pmatrix}
        -2x^2e^x\LRp{-y^2+y}\LRp{2y-1}\LRp{x-1}^2,\\
        -xy^2e^x\LRp{x\LRp{x+3}-2}\LRp{x-1}\LRp{y-1}^2
        \end{pmatrix},        
    \end{split}\\
    \begin{split}
        \p = \p_0\sin{(\pi x)}\sin{(\pi y)},
    \end{split}\\
    \begin{split}
        \r = 0,
    \end{split}
\end{align}
\end{subequations}
with the prescribed fields $\wb = \bu$ and $\db = \bb$, and a constant $\p_0$. Table \tabref{lin_smooth_2d} shows the convergence rates for each local variable and the $\Lsp^{\infty}$-norm of the divergence errors, with the corresponding convergence plots in Figure \figref{lin_smooth_2d}.  Examining  Table \tabref{lin_smooth_2d} suggests that the increment in $\Rey$ and $\Rm$ improves the convergence rates of some local variables in this problem, notably $\LbH$, $\ubH$, and $\bbH$. For a more definitive assessment of convergence rates from the numerical experiment, we focus on the results corresponding to $\Rey=\Rm=1$.
In summary, we observe the super convergence rate of $\kbr+3/2$ for $\rH$, the optimal convergence rates of $\k+1$ for $\ubH,\bbH$, the optimal convergence rate of $\kbr+1$ for $\pH$, and sub-optimal convergence rates of $\k$ for $\LbH,\JbH$.

To numerically assess the pressure robustness of our method, we intentionally perturb the pressure solution. The test is carried out on two different meshes, one with 32 elements and another one with 512 elements, using polynomial degree $\k=2$ for both and a wide range of $\p_0$ values. The results of this study are presented in Table \tabref{pressure_robust_2d}. It is observed from the table that the $\Lsp^2$-errors of all local variables including the velocity and magnetic fields are independent of $\p_0$ regardless of which mesh is used. The observation implies that these errors do not depend on the pressure field and hence our method could be pressure robust. A particularly noteworthy discovery is the independence of the magnetic field error from the pressure field, a phenomenon previously observed in \cite{gleason_divergence-conforming_2022} as well. Plausible reasoning for this observation may stem from the absence of the pressure field in the magnetic induction equation presented in \eqnref{mhd2lin_1}.

\begin{table}[!htb]
\centering
\begin{tabular}{|c|cccccc|cc|}
\multicolumn{9}{c}{$\Rey=\Rm=1,\kappa=1$} \\
\hline
 &
\begin{tabular}{@{}c@{}} $\Rey\LbH$ \end{tabular} & 
\begin{tabular}{@{}c@{}} $\ubH$\end{tabular} & 
\begin{tabular}{@{}c@{}} $\pH$ \end{tabular} & 
\begin{tabular}{@{}c@{}} $\frac{\Rm}{\kappa}\JbH$\end{tabular} & 
\begin{tabular}{@{}c@{}} $\bbH$\end{tabular} & 
\begin{tabular}{@{}c@{}} $\rH$\end{tabular} & 
$\norm{\Div{\bu_h}}_{\infty}$ & $\norm{\Div{\bb_h}}_{\infty}$\\
\hline
$k=1$ & 1.02 &	2.29 &	1.12 &	1.20 &	2.40 &	1.96 &	3.85E-15 & 4.06E-15 \\ 
$k=2$ & 2.02 &	3.08 &	2.21 &	2.28 &	3.06 &	2.73 &	3.44E-14 & 2.71E-14 \\ 
$k=3$ & 3.04 &	4.06 &	3.17 &	3.35 &	4.03 &	3.75 &	7.94E-14 & 8.45E-14 \\ 
$k=4$ & 4.03 &	5.08 &	4.28 &	4.51 &	4.91 &	4.79 &	2.90E-13 & 4.55E-13 \\     
\hline
\multicolumn{9}{c}{$\Rey=\Rm=1000,\kappa=1$} \\
\hline
$k=1$ & 1.29 &	1.34 &	0.99 &	1.36 &	1.46 &	0.65 &	4.30E-15 & 2.54E-15 \\ 
$k=2$ & 2.93 &	4.05 &	2.02 &	3.01 &	4.14 &	2.40 &	2.50E-14 & 2.17E-14 \\ 
$k=3$ & 3.98 &	5.28 &	3.02 &	3.85 &	5.18 &	3.81 &	1.35E-12 & 1.07E-13 \\ 
$k=4$ & 4.16 &	5.21 &	4.00 &	4.17 &	5.20 &	4.39 &	2.67E-12 & 2.22E-12 \\  
\hline
\end{tabular}
\caption{Convergence rates of all local variables and divergence errors of velocity and magnetic fields for the E-HDG method applied to solve the two-dimensional problem with a smooth manufactured solution given in \eqnref{smooth_2d} with $\p_0=1$. The corresponding results are also presented in Figure \figref{lin_smooth_2d}. In this table, the convergence rates are evaluated at the last two data sets and the divergence errors are evaluated at the last data set.}\tablab{lin_smooth_2d}
\end{table}

\begin{table}[!htb]
\centering
\resizebox{\textwidth}{!}{
\begin{tabular}{|c|c|c|c|c|c|c|c|c|}
\multicolumn{9}{c}{32 elements in total, $h\approx1.46E-1$} \\
\hline
$p_0$ &
$\Rey\norm{\Lb-\LbH}_0$ &
$\norm{\ub-\ubH}_0$ &
\textcolor{black}{$\norm{\p-\pH}_0$} &
$\frac{\Rm}{\kappa}\norm{\Jb-\JbH}_0$ &
$\norm{\bb-\bbH}_0$ &
$\norm{\r-\rH}_0$ &
$\norm{\Div{\ub_h}}_{\infty}$ &
$\norm{\Div{\bb_h}}_{\infty}$ \\
\hline
1   & 2.09E-2  & 1.27E-3 & \textcolor{black}{5.57E-2} & 1.67E-2 & 9.66E-4 & 3.97E-2 & 7.15E-16 & 7.49E-16	\\
10  & 2.09E-2  & 1.27E-3 & \textcolor{black}{2.02E-1} & 1.67E-2 & 9.66E-4 & 3.97E-2 & 1.40E-15 & 6.11E-16	\\ 
25  & 2.09E-2  & 1.27E-3 & \textcolor{black}{4.90E-1} & 1.67E-2 & 9.66E-4 & 3.97E-2 & 2.78E-15 & 6.66E-16	\\  
100 & 2.09E-2  & 1.27E-3 & \textcolor{black}{1.95} & 1.67E-2 & 9.66E-4 & 3.97E-2 & 1.03E-14 & 6.38E-16	\\  
\hline
\multicolumn{9}{c}{512 elements in total, $h\approx3.66E-2$} \\
\hline
1   & 1.27E-3  & 1.09E-5 & \textcolor{black}{2.30E-3} & 7.48E-4 & 1.07E-5 & 1.40E-3 & 4.44E-15 & 3.77E-15	\\
10  & 1.27E-3  & 1.09E-5 & \textcolor{black}{1.26E-2} & 7.48E-4 & 1.07E-5 & 1.40E-3 & 7.41E-15 & 3.77E-15	\\ 
25  & 1.27E-3  & 1.09E-5 & \textcolor{black}{3.11E-2} & 7.48E-4 & 1.07E-5 & 1.40E-3 & 1.73E-14 & 4.05E-15	\\  
100 & 1.27E-3  & 1.09E-5 & \textcolor{black}{1.24E-1} & 7.48E-4 & 1.07E-5 & 1.40E-3 & 7.92E-14 & 4.11E-15	\\  
\hline
\end{tabular}
}
\caption{The errors in the local variables for the smooth manufactured solution given in \eqnref{smooth_2d} for meshes of 32 and 512 elements, a polynomial degree of $k=2$, and a range of $p_0$ values. The physical parameters are set to be $\Rey=\Rm=1$ and $\kappa=1$.}
\tablab{pressure_robust_2d}
\end{table}

\begin{figure*}
    \centering
    \resizebox{\textwidth}{!}{
    \begin{tabular}{ccc}
     \centered{\includegraphics{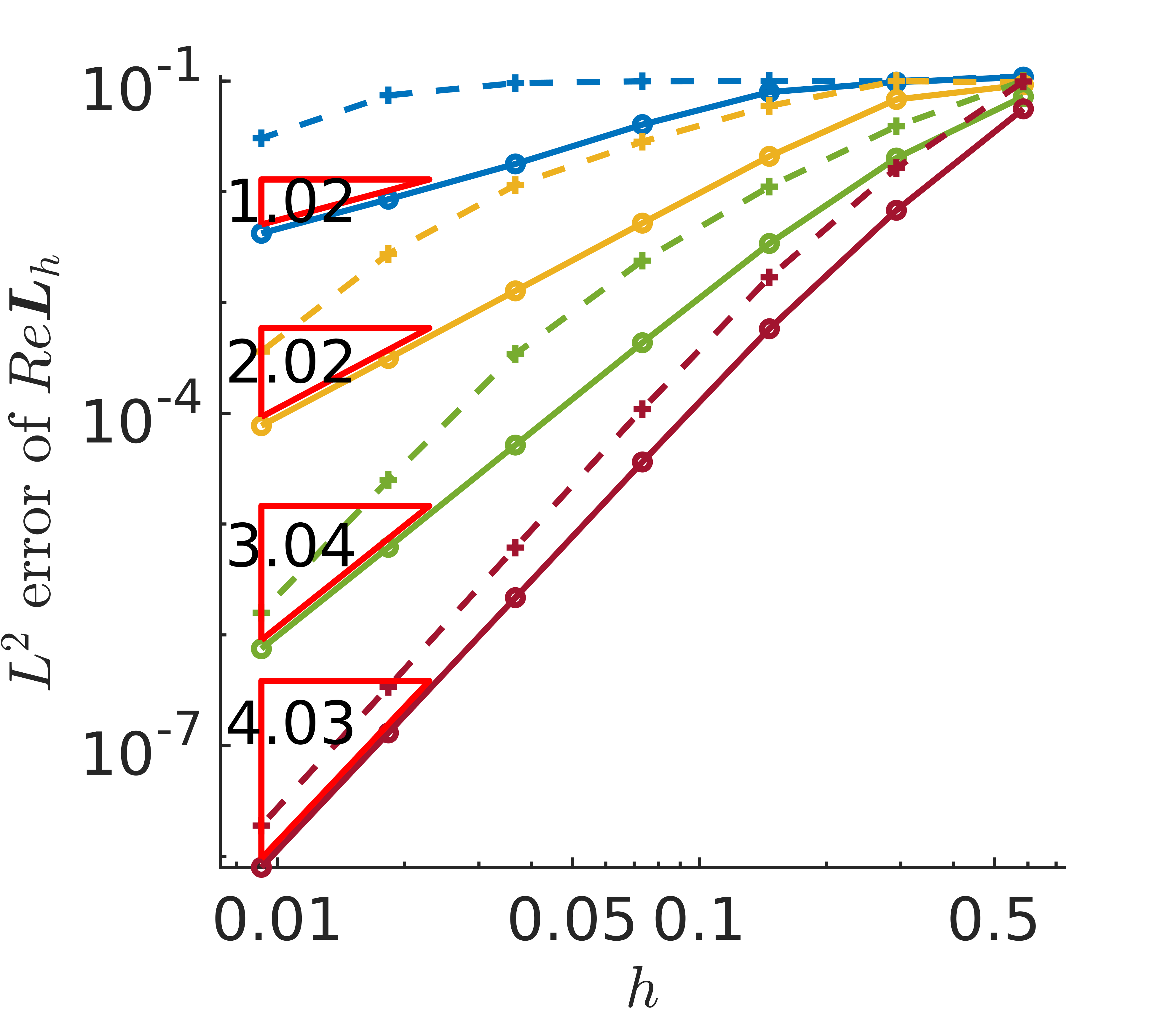} }& 
     \centered{\includegraphics{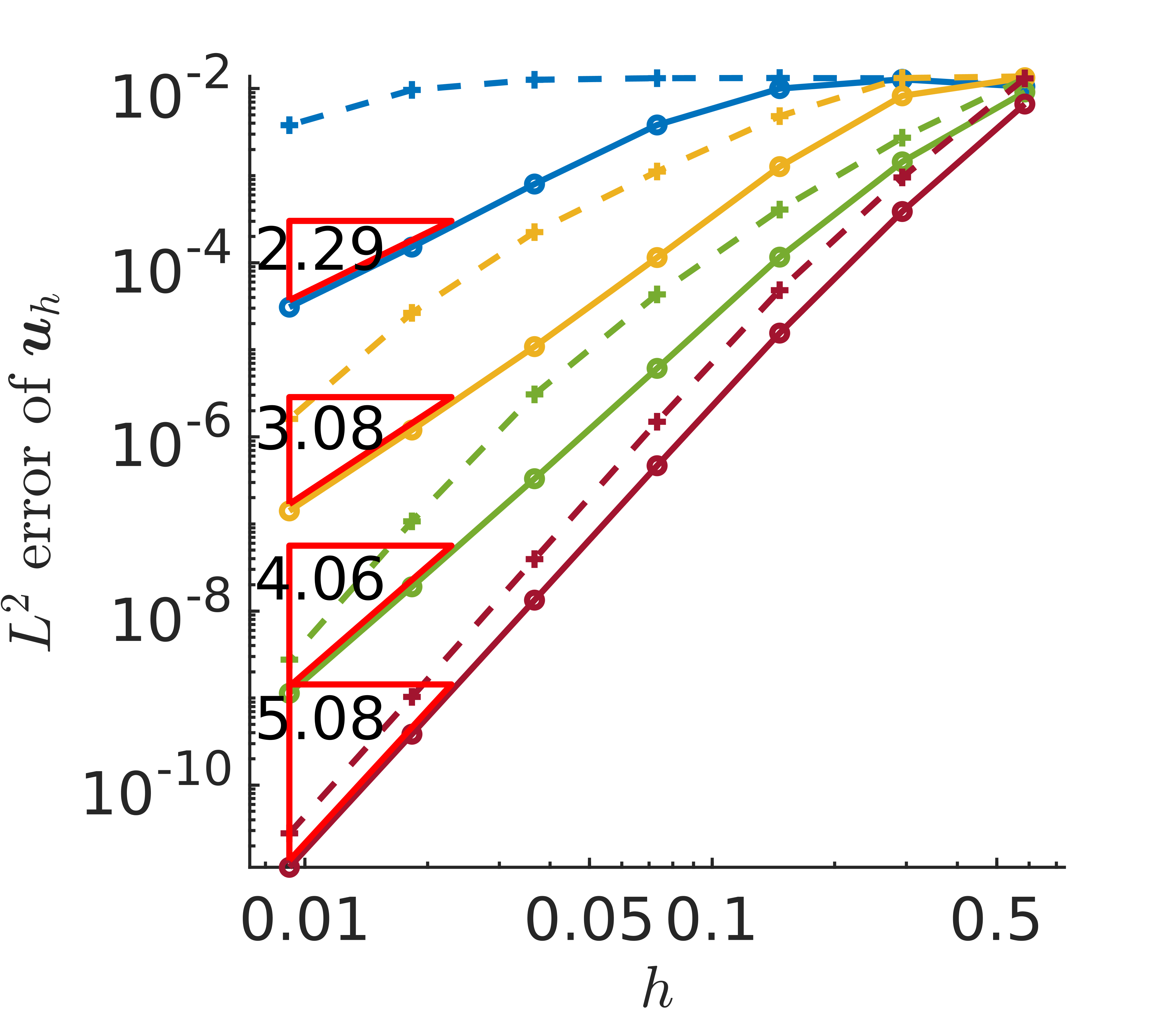}} &
     \centered{\includegraphics{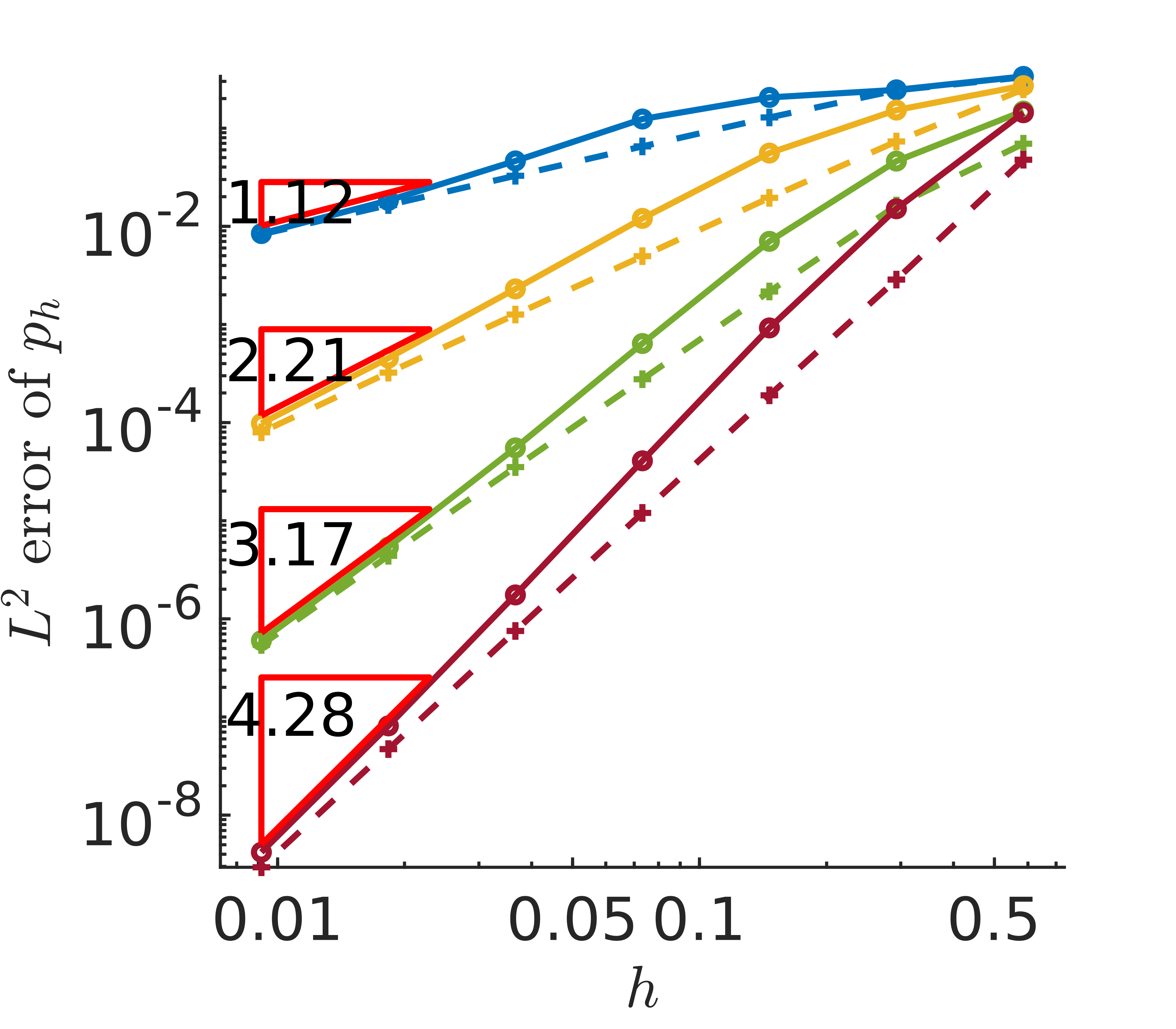}}\\
     \centered{\includegraphics{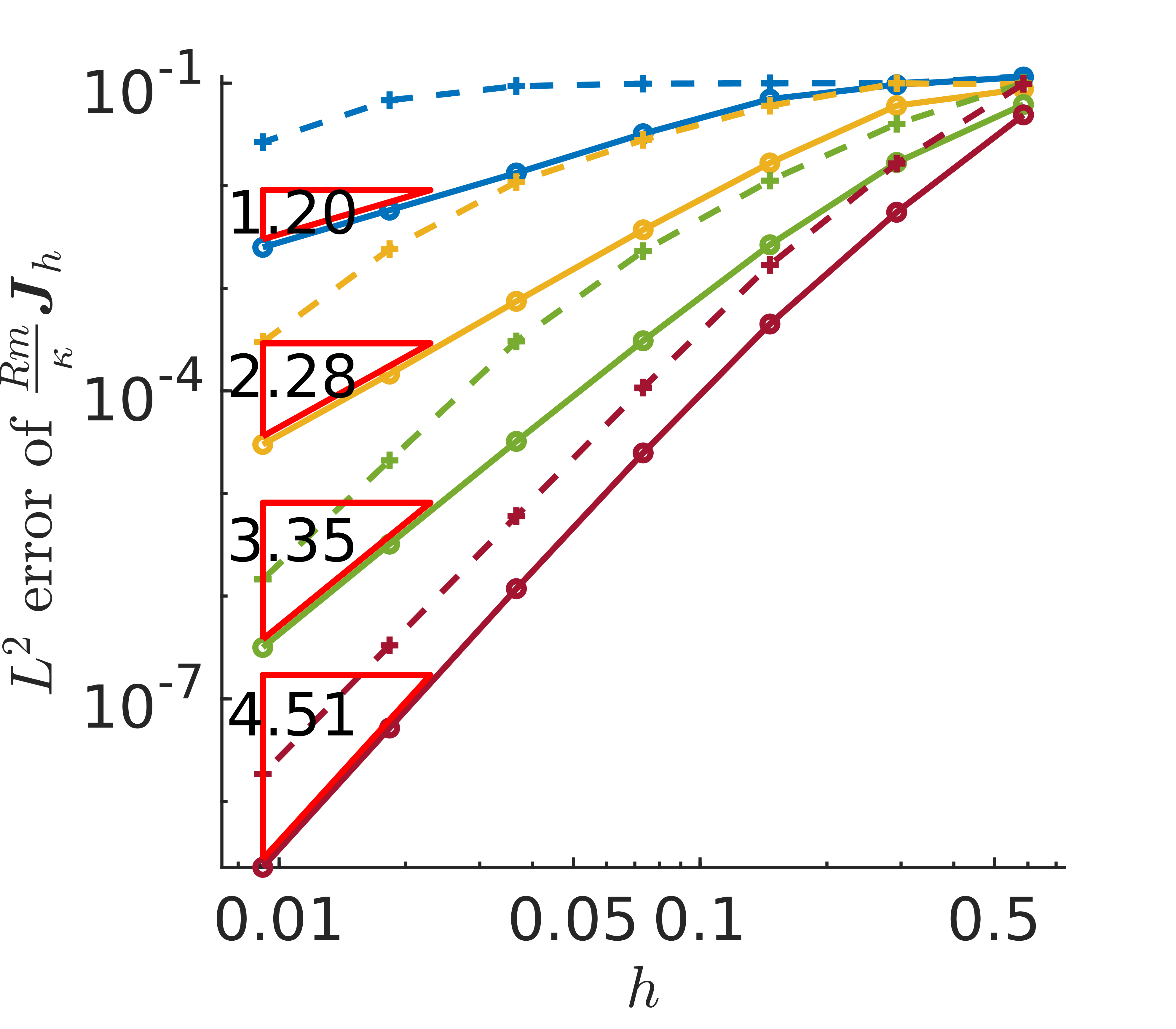}} &
     \centered{\includegraphics{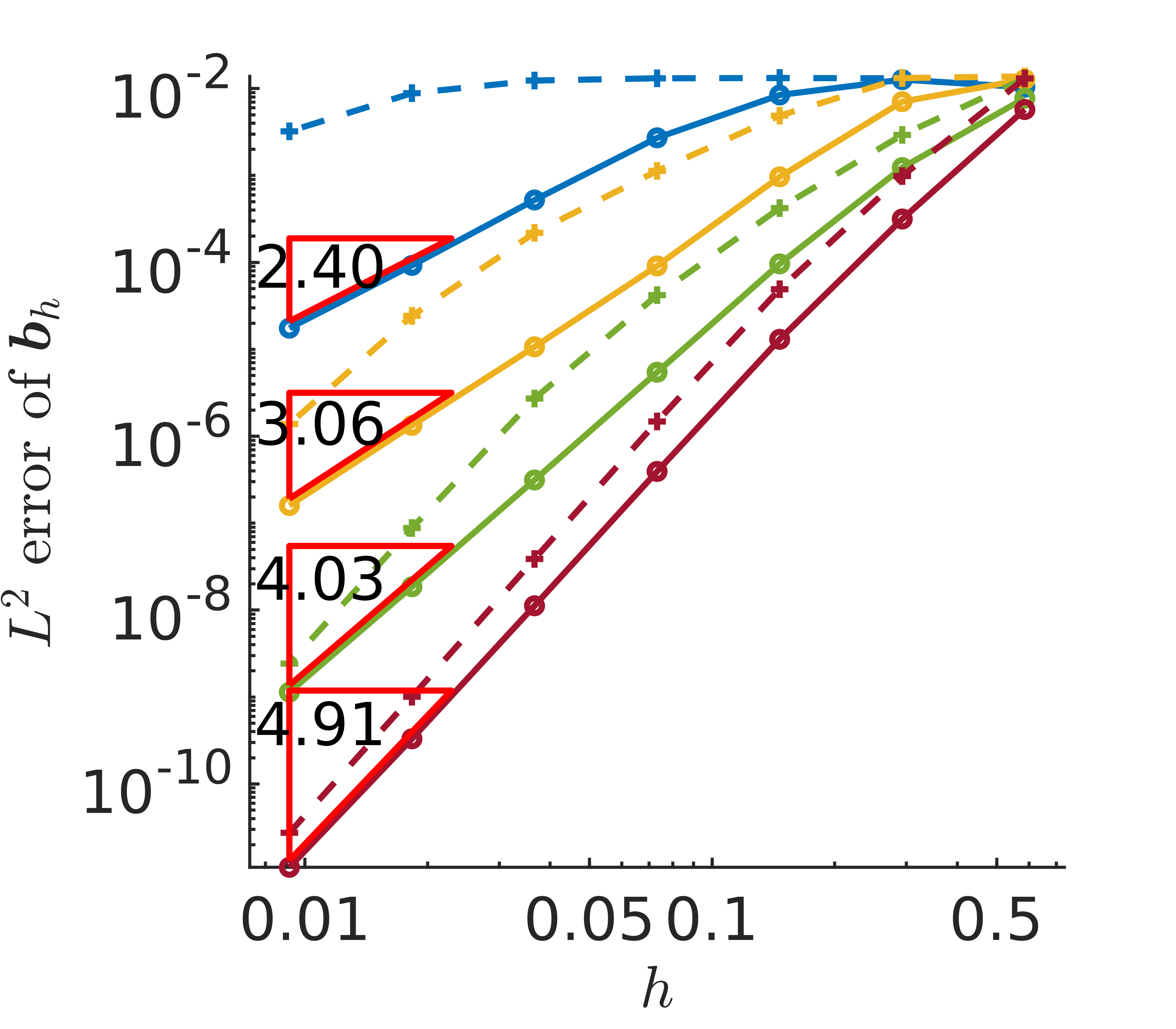}} &
     \centered{\includegraphics{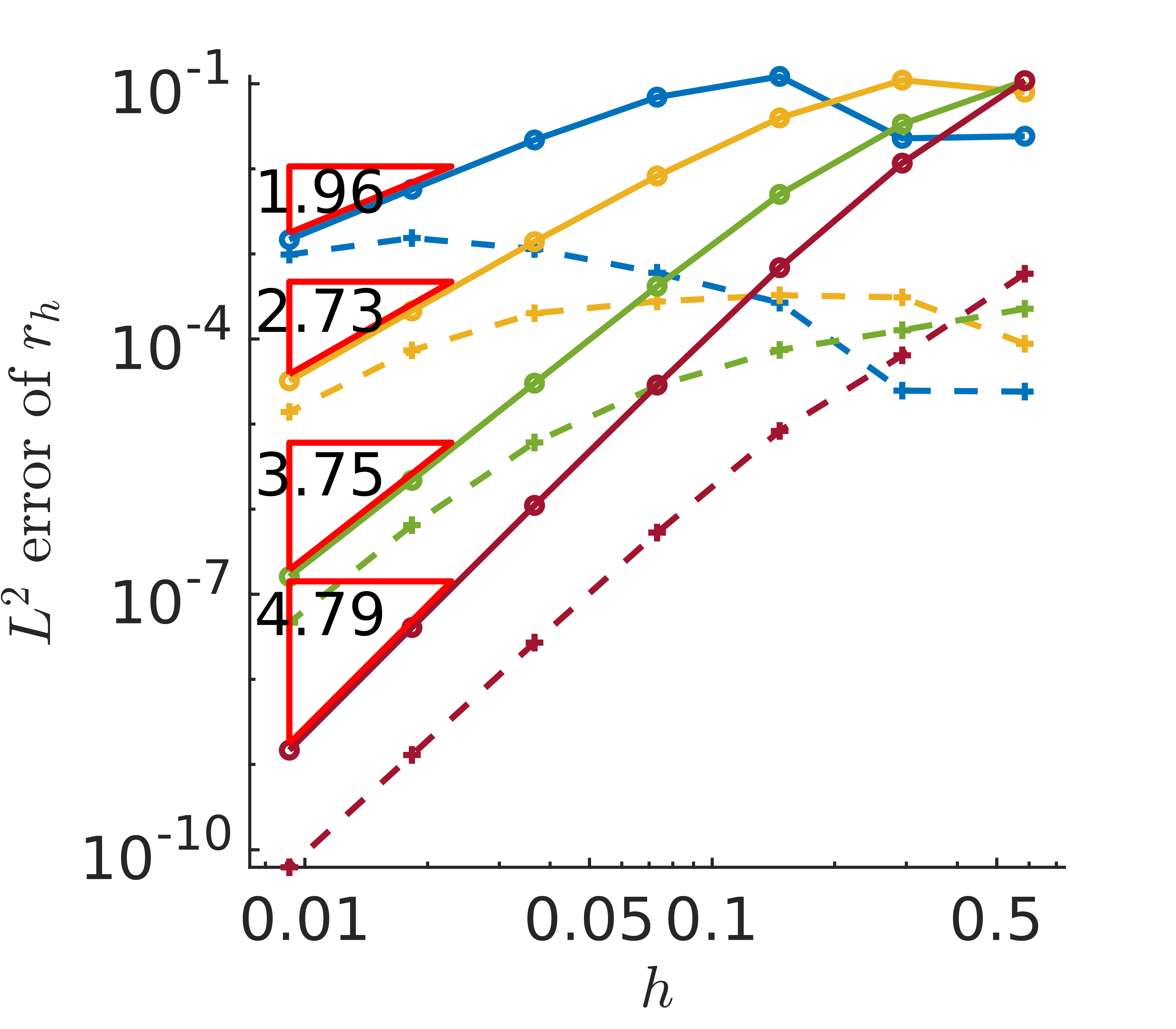}}\\
     \centered{\includegraphics{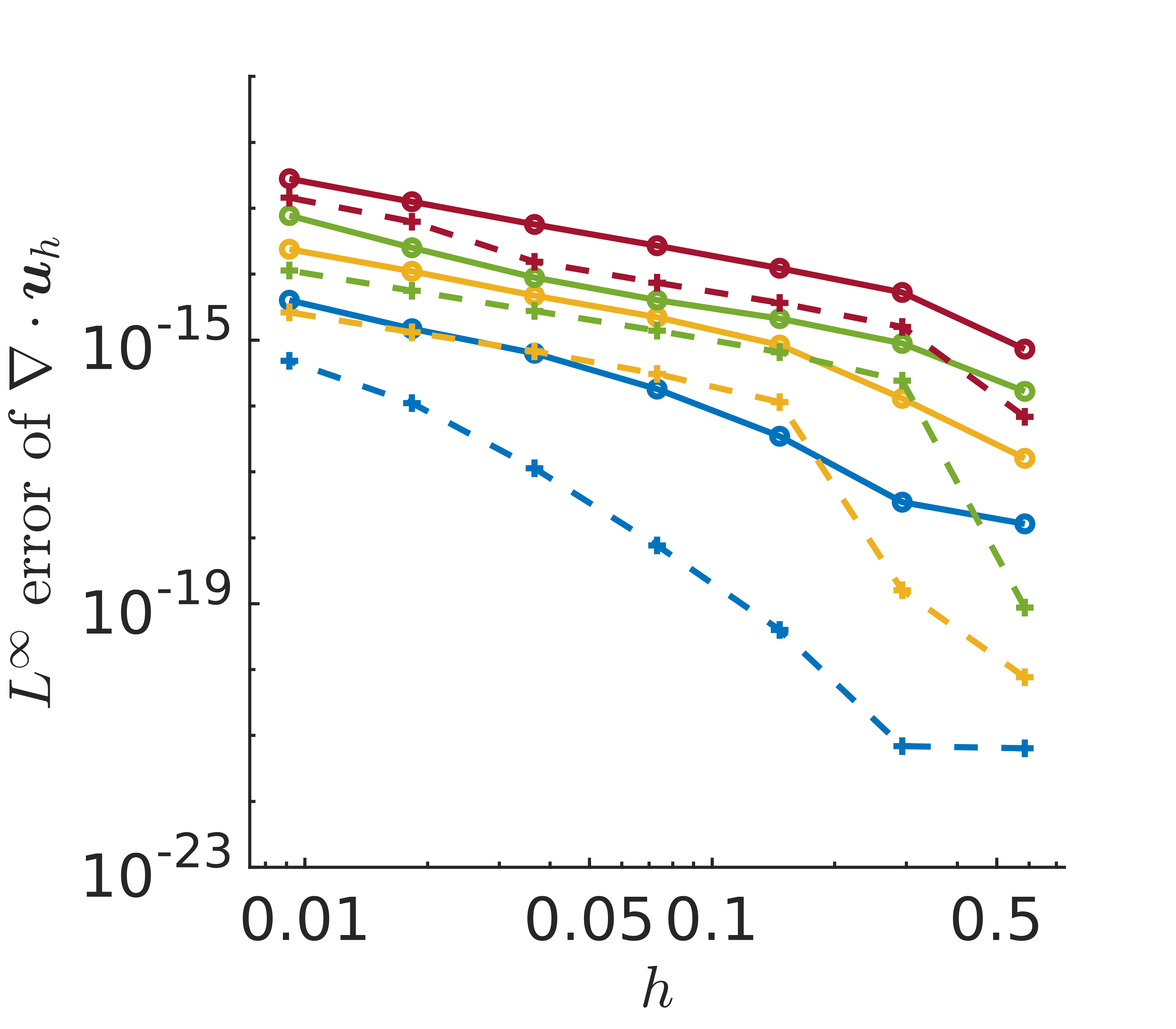}} &
     \centered{\includegraphics{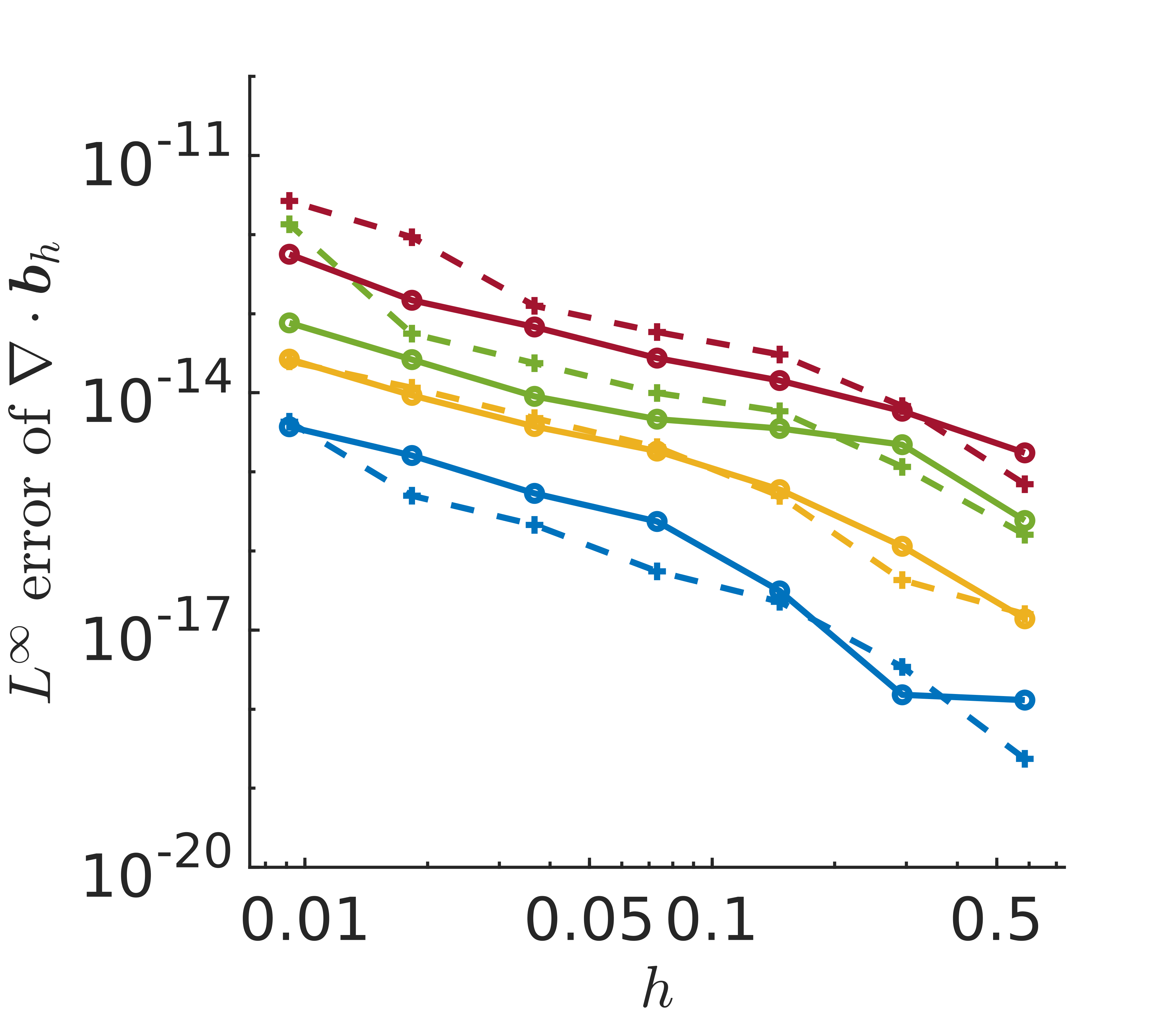}} &
     \centered{\includegraphics{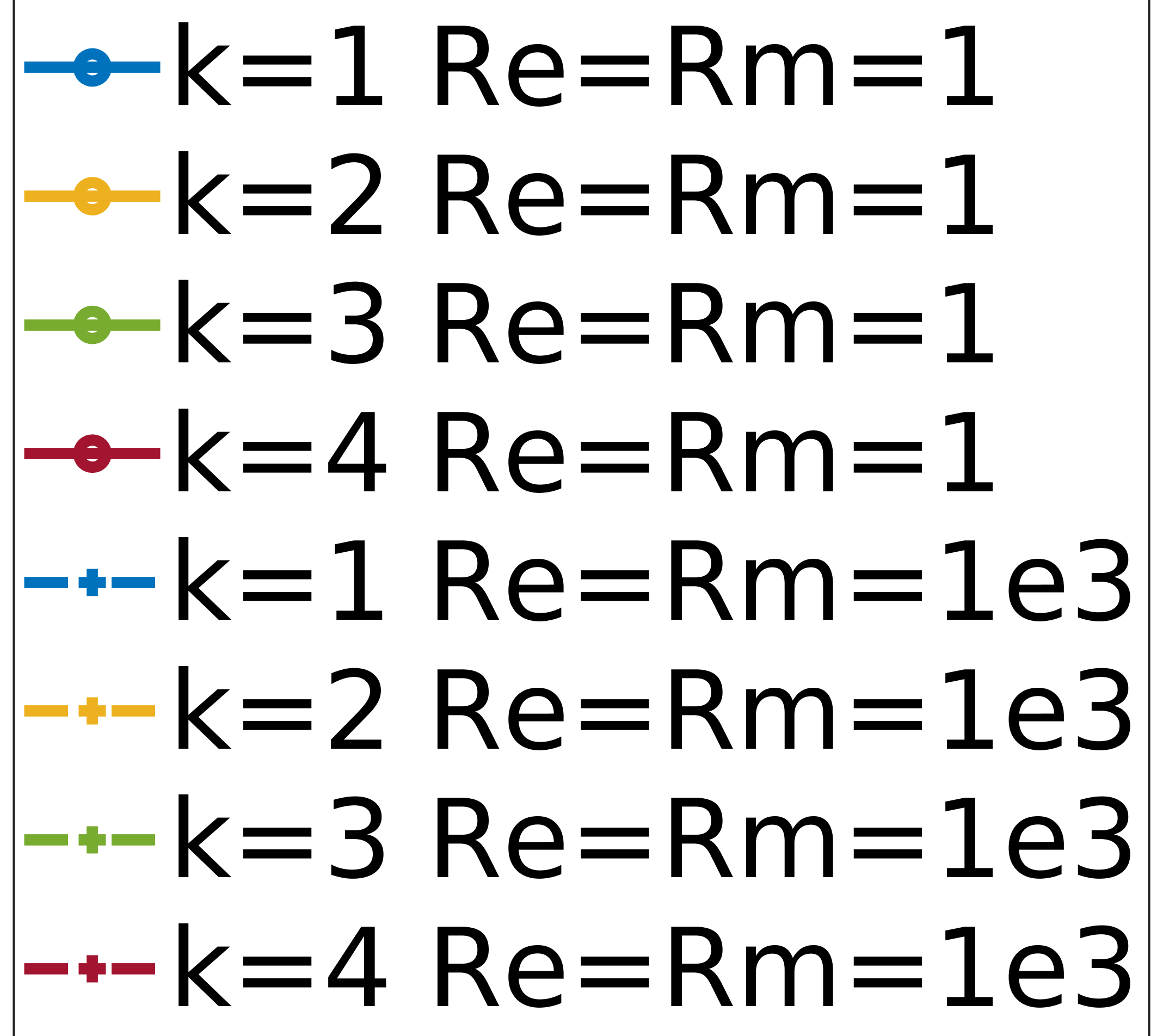}}
    \end{tabular}
    }
    \caption{Convergence histories of all local variables and divergence errors for the E-HDG method applied to solve the two-dimensional problem with a smooth manufactured solution given in \eqnref{smooth_2d} where we set $\p_0=1$. Only the convergence rates for $\Rey=\Rm=1$ are presented here.}\figlab{lin_smooth_2d}
\end{figure*}

\subsubsection{Two-dimensional singular manufactured solution}\seclab{lin_singular_2d}
To assess the robustness of our E-HDG scheme, we apply it to a problem where a strong singularity exists on the boundary. This example illustrates the convergence of the E-HDG scheme using a manufactured solution with a singularity (similar to the example in Section 5.2 of \cite{houston_mixed_2009} and Section 5.3 of \cite{lee_analysis_2019}). In particular, we consider a  nonconvex domain given by $\Omega=(-1,1)\times(-1,1)\backslash [0,1)\times(-1,0]$. We take $\Rey=\Rm=\kappa=1$, $\wb=\bs{0}$, and $\db=(-1,1)$. We pick $\gb$ and $\fb$ such that the analytical solution of \eqnref{mhdlin-first}-\eqref{compatibility} has the form 
\begin{subequations}\eqnlab{singular_2d}
\begin{align}
    \begin{split}
        \bu = 
        \begin{pmatrix}
        \rho^{\lambda}\LRs{(1+\lambda)\sin{(\phi)}\psi(\phi)+\cos{(\phi)}\psi'(\phi)},\\
        \rho^{\lambda}\LRs{-(1+\lambda)\cos{(\phi)}\psi(\phi)+\sin{(\phi)}\psi'(\phi)}
        \end{pmatrix}, 
    \end{split}\\
    \begin{split}
        \bb= \Grad{\LRp{\rho^{2/3}\sin{\LRp{\frac{2\phi}{3}}}}},
    \end{split}\\
    \begin{split}
        \p = -\rho^{\lambda-1}\frac{\LRp{1+\lambda}^2\psi'(\phi)+\psi'''(\phi)}{1-\lambda},
    \end{split}\\
    \begin{split}
        \r = 0,
    \end{split}
\end{align}
\end{subequations}
where
\begin{align*}
    &\psi(\phi) = \cos{(\lambda\omega)}\LRs{\frac{\sin{\LRp{(1+\lambda)\phi}}}{1+\lambda}-\frac{\sin{\LRp{(1-\lambda)\phi}}}{1-\lambda}}-\cos{\LRp{(1+\lambda)\phi}}+\cos{\LRp{(1-\lambda)\phi}},\\
    &\omega=\frac{3\pi}{2},\quad\lambda\approx0.54448373678246,\quad\phi\in\LRs{0,\frac{3\pi}{2}}.
\end{align*}
For this problem, it is known that $\ub\in\LRs{\Hsp^{1+\lambda}(\Omega)}^2$, $\p\in\Hsp^{\lambda}(\Omega)$, and $\bb\in\LRs{\Hsp^{2/3}(\Omega)}^2$, and the solution contains magnetic and hydrodynamic singularities that are among the strongest singularities \cite{houston_mixed_2009}.

Convergence results for this problem are summarized in Table \tabref{lin_singular_2d} and illustrated in Figure \figref{lin_singular_2d}. For the fluid variables $\LbH$, $\ubH$, and $\pH$, we observe convergence rates of approximately 2/3. In the case of magnetic variables, namely $\JbH$, $\bbH$, and $\rH$, the observed convergence rates are approximately 1/5, 2/3, and 1/3 respectively. Compared to the result presented in \cite{lee_analysis_2019}, the convergence rates of the fluid variables are lower, while the ones of the magnetic variables are similar. Remarkably, despite the accuracy challenges, divergence errors in both velocity and magnetic fields remain close to machine zero in this singular test case.


\begin{table}[!htb]
\centering
\begin{tabular}{|c|cccccc|cc|}
\multicolumn{9}{c}{$\Rey=\Rm=\kappa=1$} \\
\hline
 &
\begin{tabular}{@{}c@{}} $\Rey\LbH$ \end{tabular} & 
\begin{tabular}{@{}c@{}} $\ubH$\end{tabular} & 
\begin{tabular}{@{}c@{}} $\pH$ \end{tabular} & 
\begin{tabular}{@{}c@{}} $\frac{\Rm}{\kappa}\JbH$\end{tabular} & 
\begin{tabular}{@{}c@{}} $\bbH$\end{tabular} & 
\begin{tabular}{@{}c@{}} $\rH$\end{tabular} & 
$\norm{\Div{\bu_h}}_{\infty}$ & $\norm{\Div{\bb_h}}_{\infty}$\\
\hline
$k=1$ & 0.64 &	0.64 &	0.65 &	0.09 &	0.68 &	0.46 &	1.70E-12 & 3.82E-11 \\ 
$k=2$ & 0.63 &	0.63 &	0.65 &	0.12 &	0.65 &	0.31 &	7.57E-12 & 3.78E-10 \\ 
$k=3$ & 0.65 &	0.67 &	0.69 &	0.20 &	0.64 &	0.32 &	1.68E-11 & 8.57E-10 \\ 
$k=4$ & 0.66 &	0.68 &	0.71 &	0.26 &	0.62 &	0.36 &	3.68E-11 & 4.26E-09 \\    
\hline
\end{tabular}
\caption{Convergence rates of all local variables and divergence errors of velocity and magnetic fields for the E-HDG method applied to solve the two-dimensional problem with a singular manufactured solution given in \eqnref{singular_2d}. The corresponding results are also presented in Figure \figref{lin_singular_2d}. In this table, the convergence rates are evaluated at the last two data sets and the divergence errors are evaluated at the last data set.}\tablab{lin_singular_2d}
\end{table}

\begin{figure*}
    \centering
    \resizebox{\textwidth}{!}{
    \begin{tabular}{ccc}
     \centered{\includegraphics{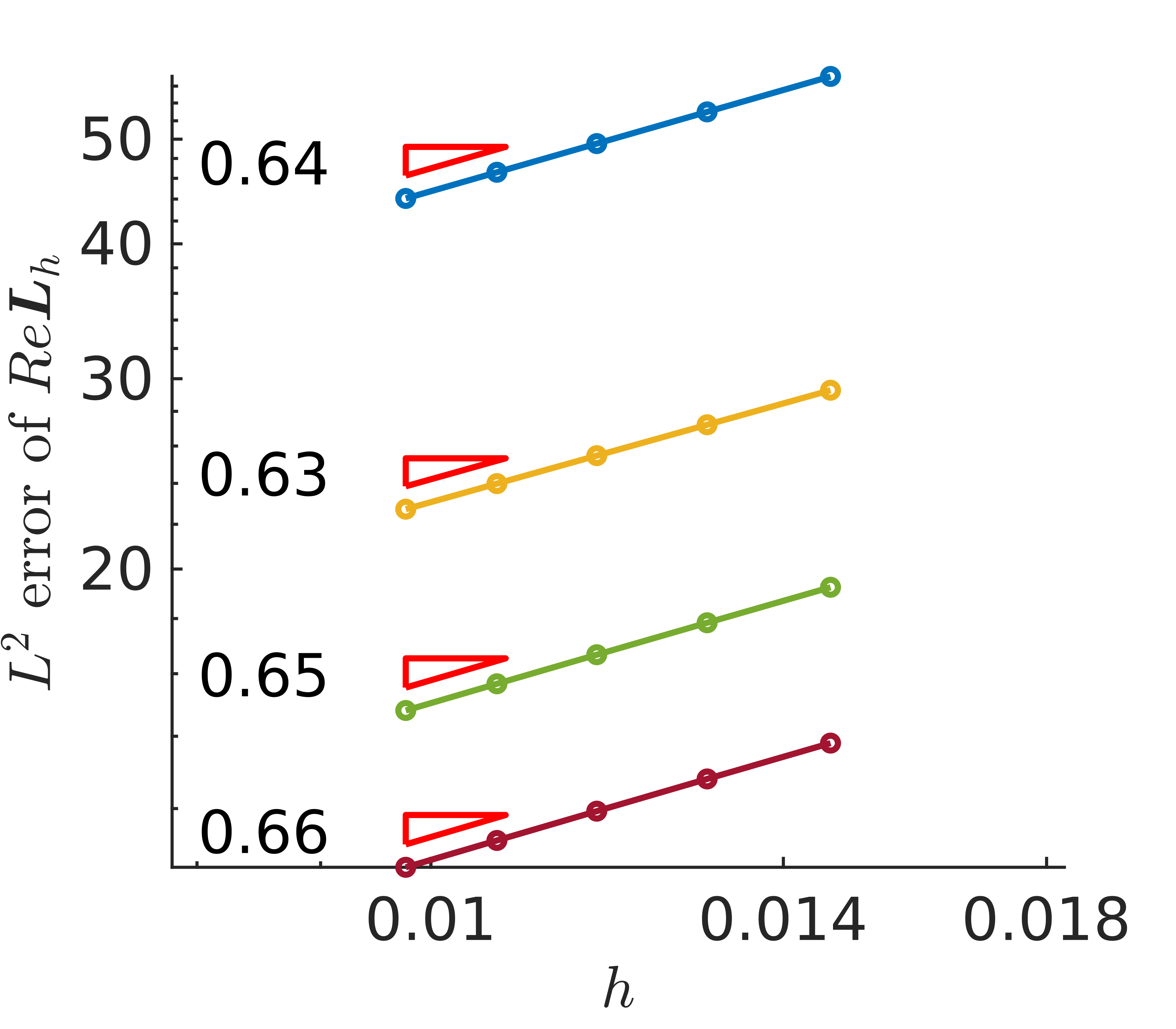} }& 
     \centered{\includegraphics{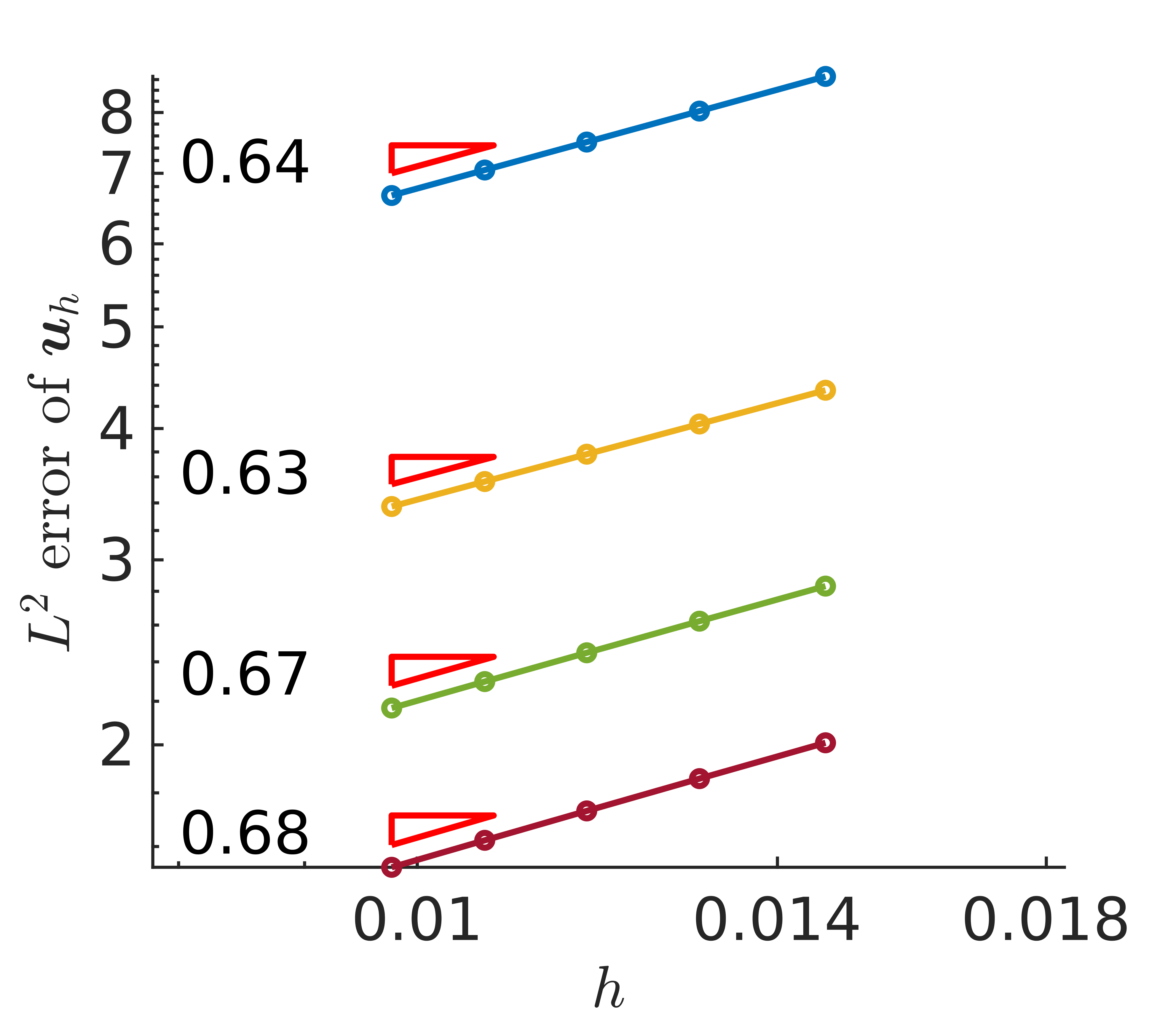}} &
     \centered{\includegraphics{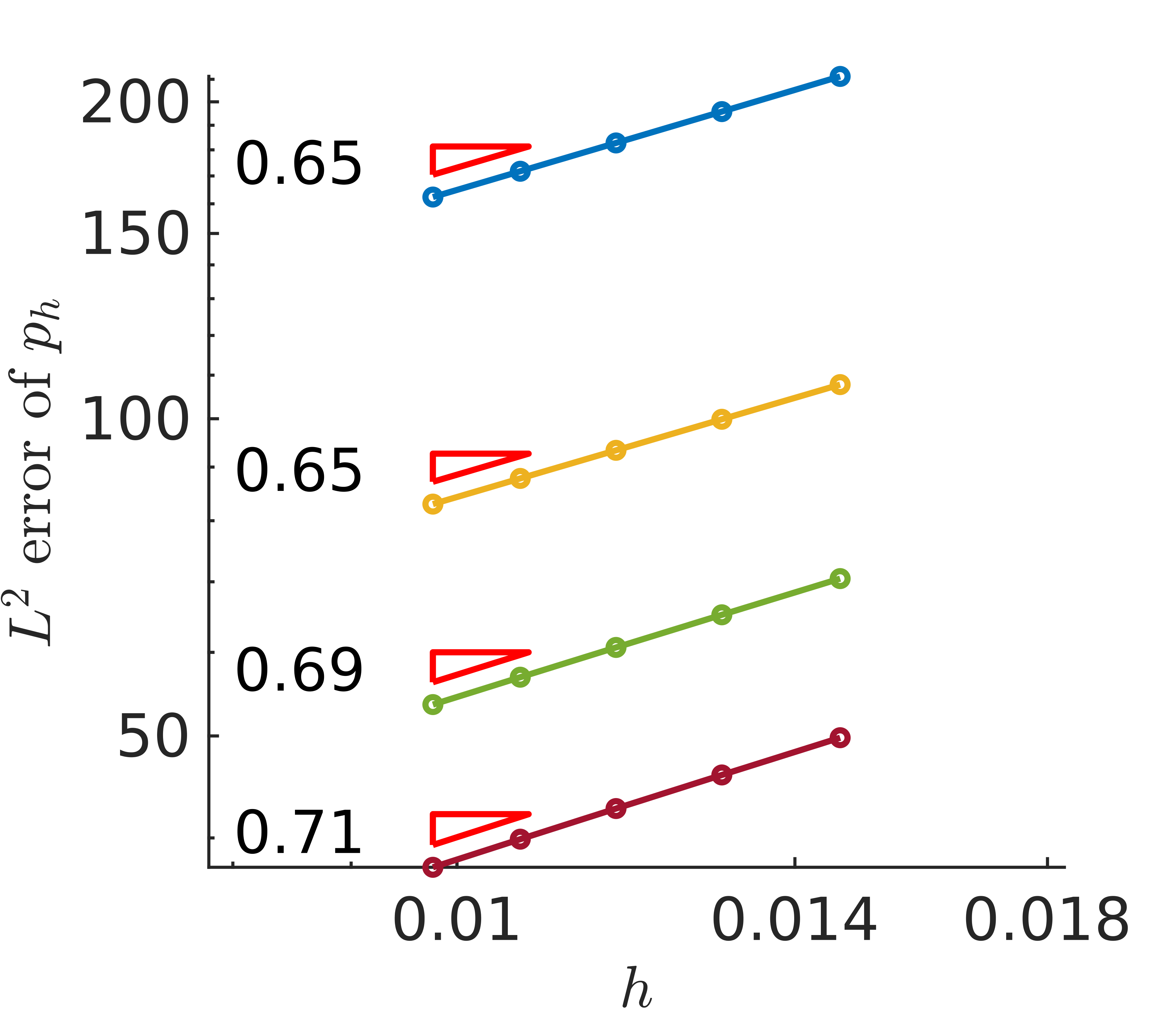}}\\
     \centered{\includegraphics{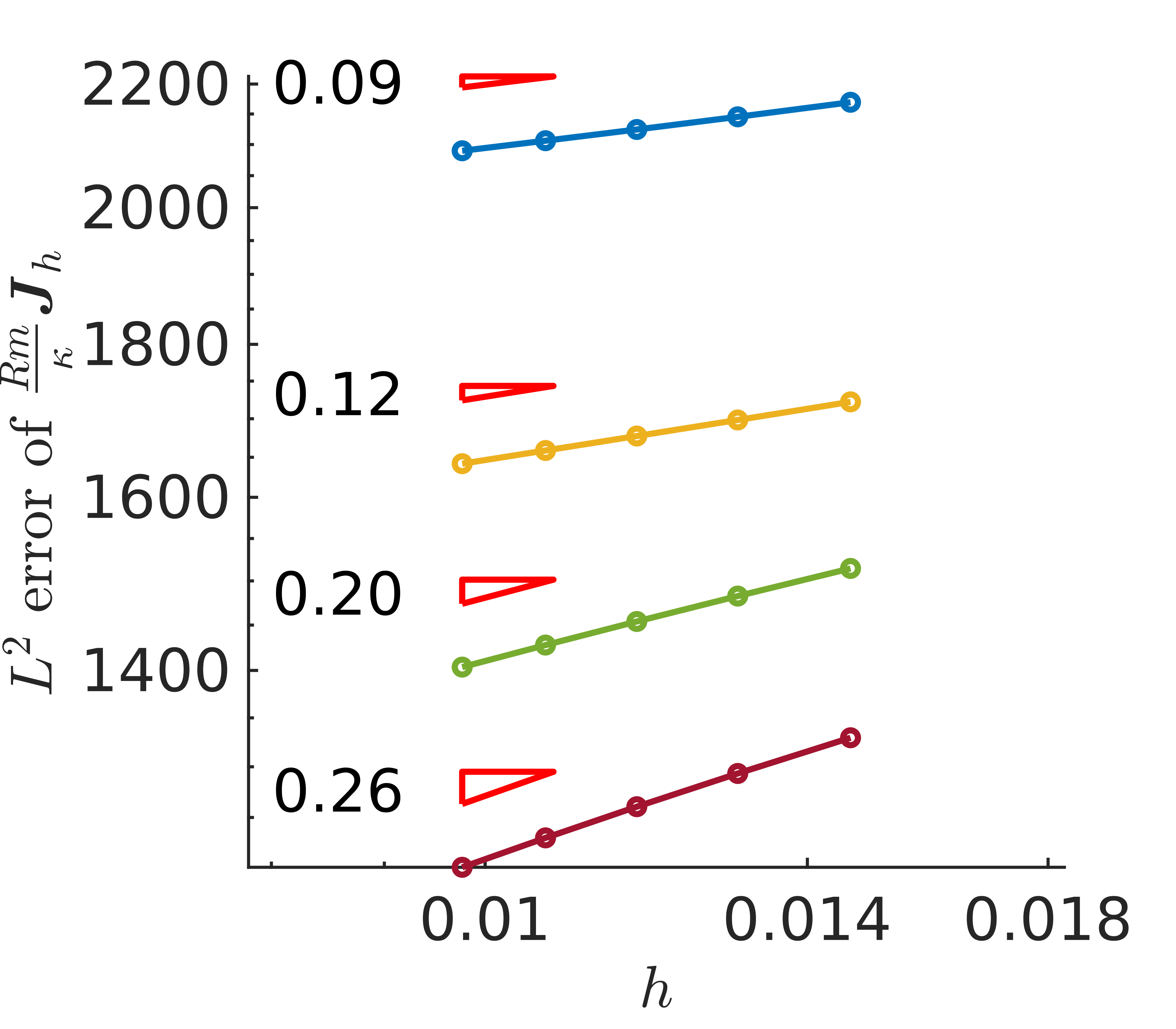}} &
     \centered{\includegraphics{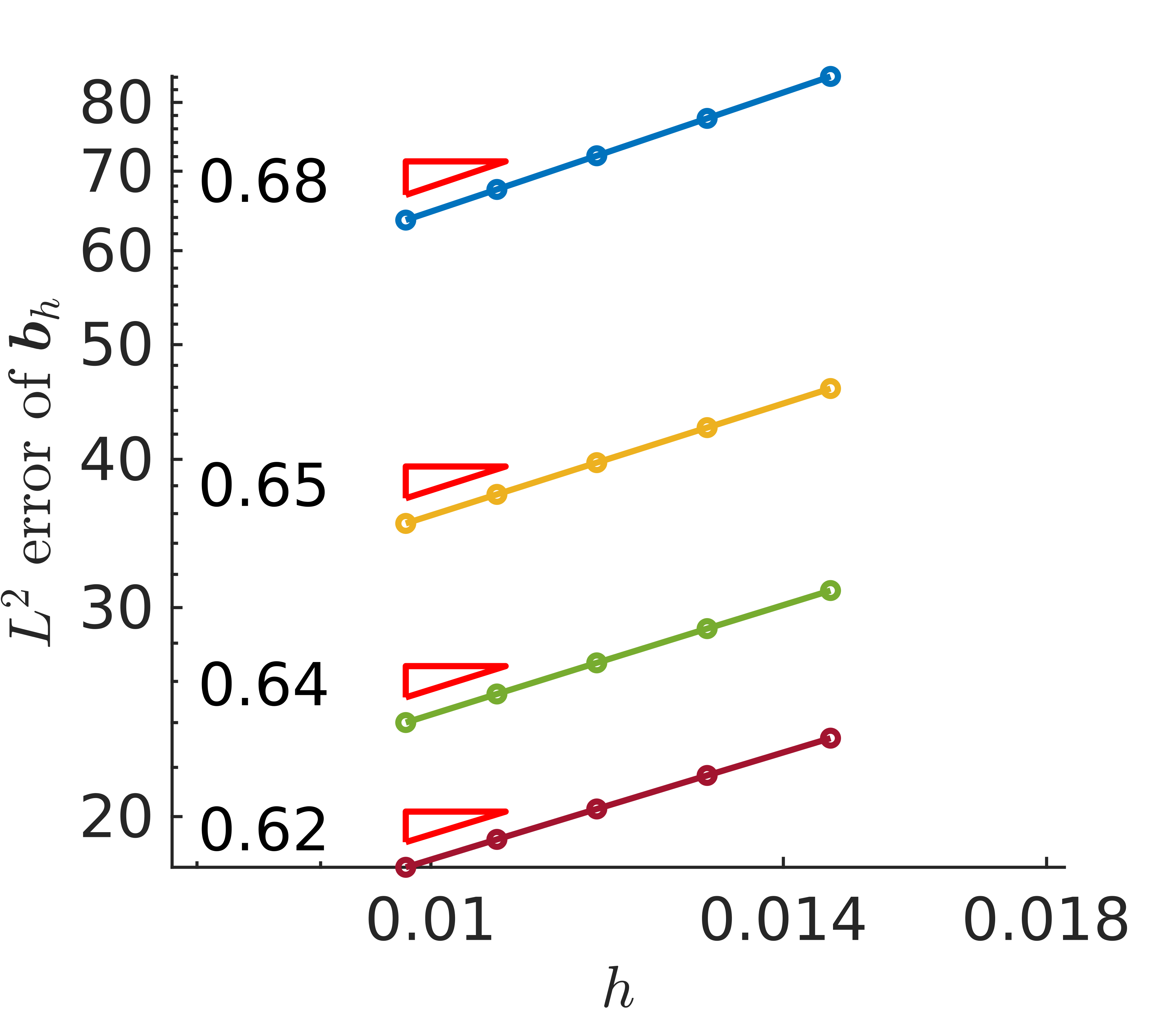}} &
     \centered{\includegraphics{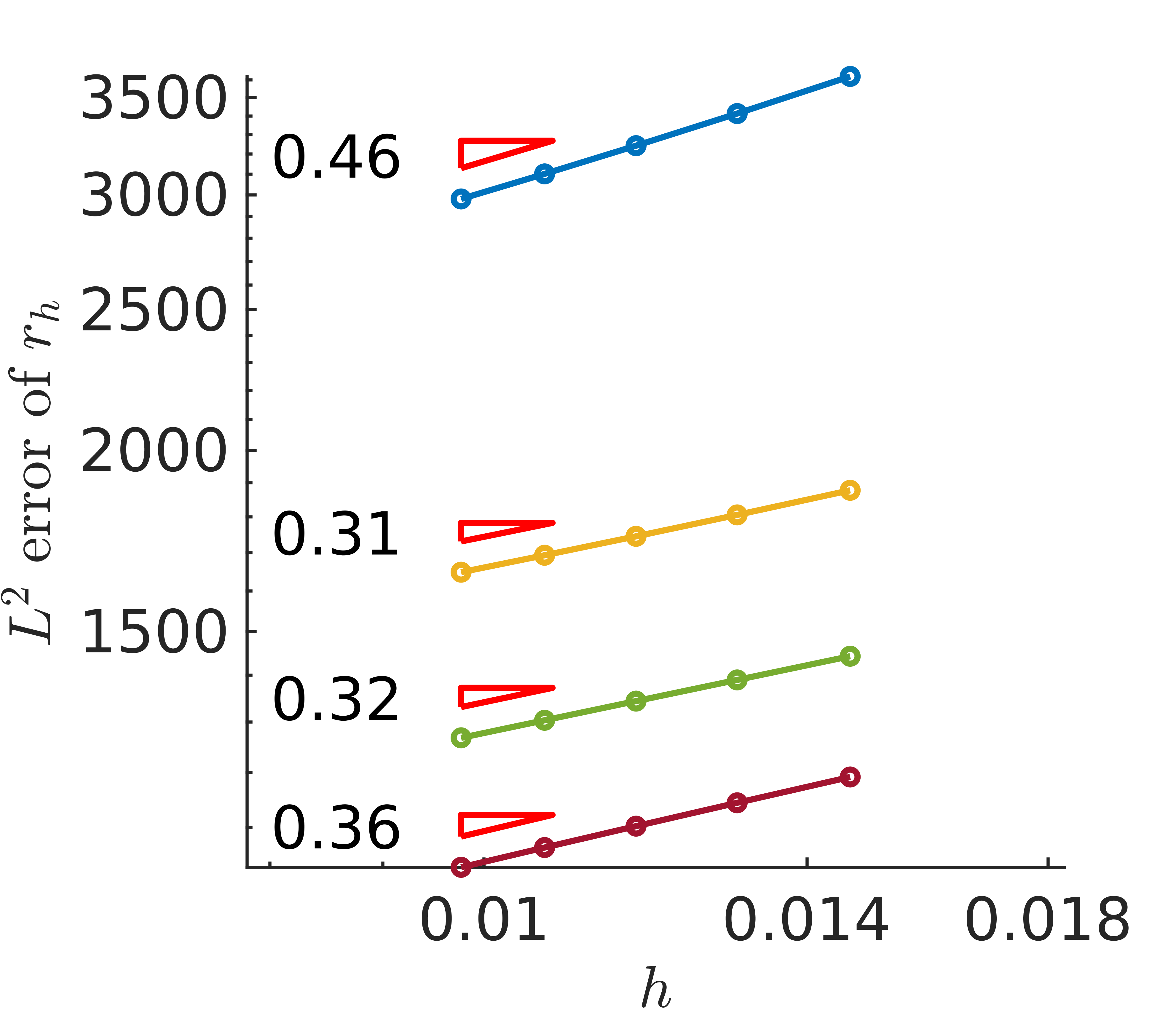}}\\
     \centered{\includegraphics{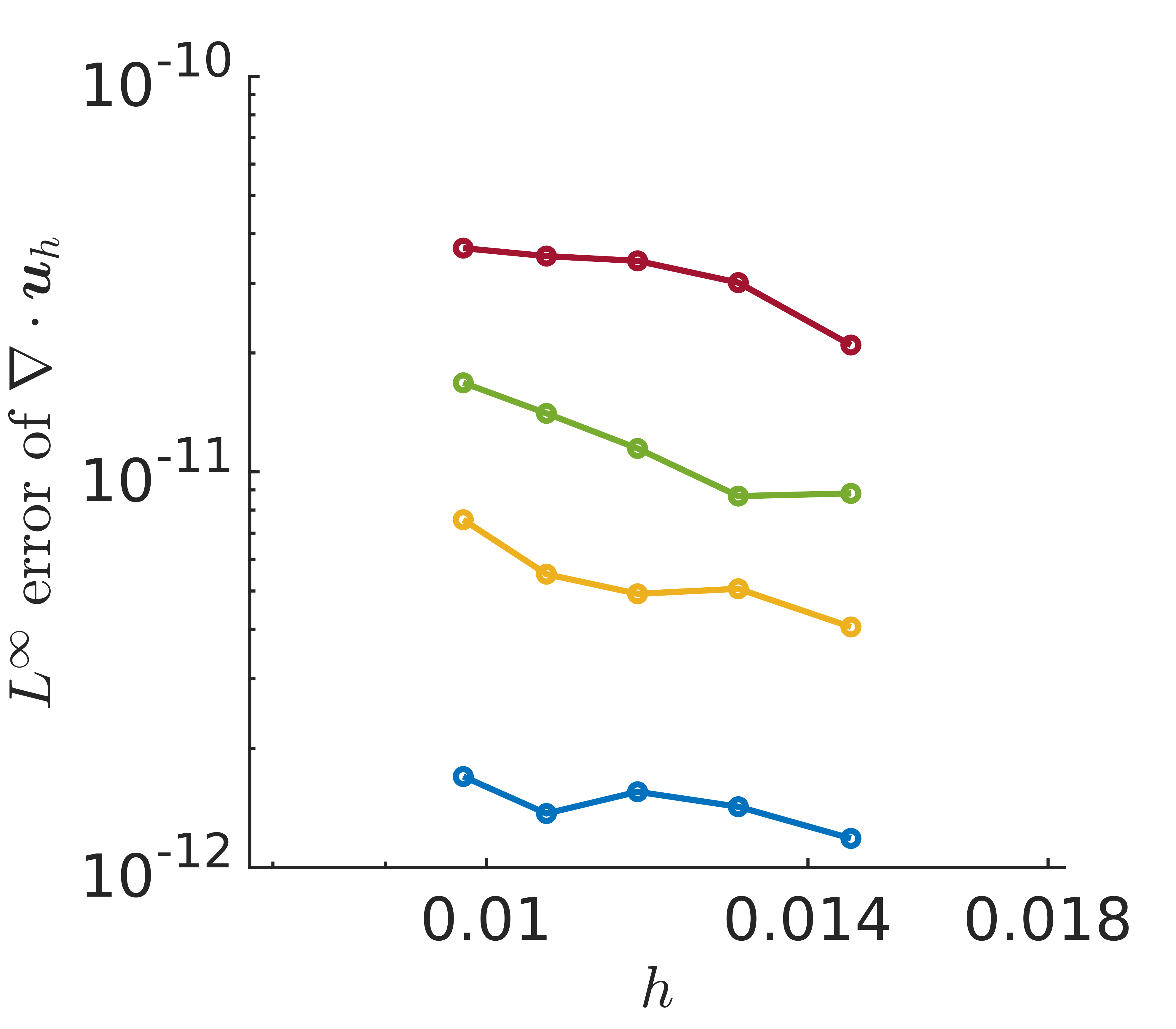}} &
     \centered{\includegraphics{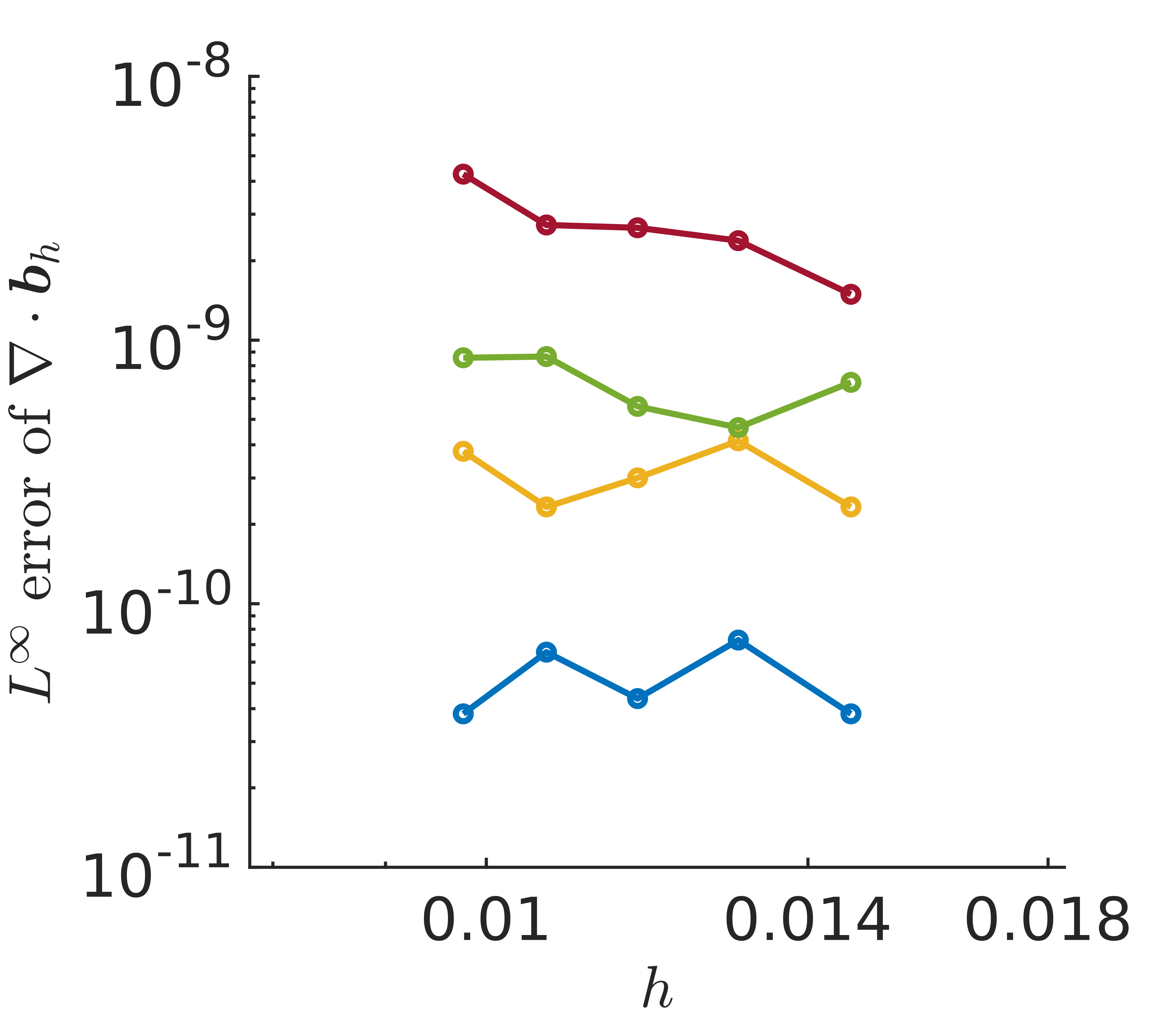}} &
     \centered{\includegraphics{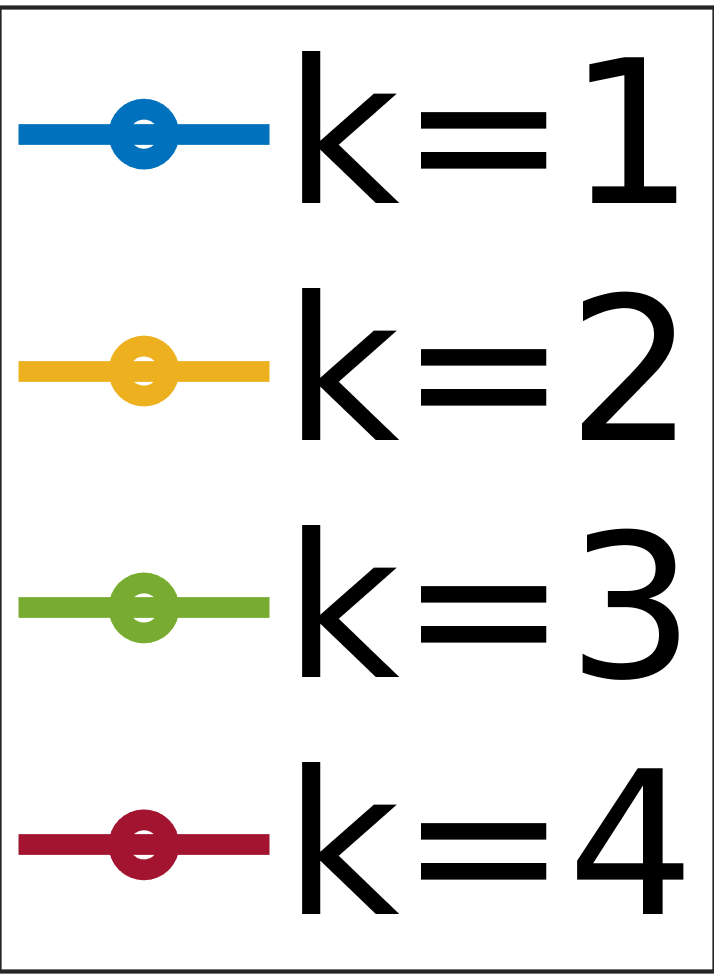}}
    \end{tabular}
    }
    \caption{Convergence histories of all local variables and divergence errors for the E-HDG method applied to solve the two-dimensional problem with a manufactured solution given in \eqnref{singular_2d} where a strong singularity exists in the magnetic field.}\figlab{lin_singular_2d}
\end{figure*}

\subsubsection{Three-dimensional smooth manufactured solution}\seclab{lin_smooth_3d} 
We now apply our E-HDG method to a three-dimensional problem on structured tetrahedron meshes. Note that our well-posedness analysis is still valid for this case. We set $\Omega=(0,1)\times(0,1)\times(0,1)$ and take $\Rey=\Rm\in\LRc{1,1000}$ and $\kappa=1$. For this test case, we choose the forcing function such that the exact solution is given by
\begin{subequations}\eqnlab{smooth_3d}
\begin{align}
    \begin{split}
        \bu =
        \begin{pmatrix}
            -\LRp{y\cos{(y)}+\sin{y}}e^{x},\\
            y\sin{(y)}e^x-\LRp{z\cos{(z)}+\sin{(z)}}e^{y},\\
            z\sin{(z)}e^{y}
        \end{pmatrix},
    \end{split}\\
    \begin{split}
        \bb = \bu
    \end{split}\\
    \begin{split}
        \p = \p_0\LRp{2e^{x}\sin{(y)}z^2-\LRp{\frac{-2}{3}(e\cos{(1)}-\cos{(1)}-e+1}}
    \end{split}\\
    \begin{split}
        \r = 0
    \end{split}
\end{align}
\end{subequations}
with the prescribed fields $\wb = \bu$ and $\db = \bb$, and a constant $\p_0$. Table \tabref{lin_smooth_3d} summarizes the convergence rates of all local variables and shows the $\Lsp^{\infty}$-norm of the divergence errors. The corresponding convergence histories are shown in Figure \figref{lin_smooth_3d}. Similar to the two-dimensional smooth testing case presented in Section \secref{lin_smooth_2d}, we observed that the convergence rates are affected by $\Rey$ and $\Rm$ here as well, but in an adverse manner. The effect is evident for $\LbH$, $\ubH$ and $\bbH$. 
We present the convergence rates for the case $\Rey=\Rm=1$. As can be seen, $\pH$ and $\rH$ exhibit superconvergence with a rate of $\kbr+3/2$, and the convergence rates of $\ubH$ and $\bbH$ are optimal with $\k+1$. For $\LbH$ and $\JbH$, the convergence rate is, however, suboptimal with $\k$. The conclusion is consistent with the one made in Section \secref{lin_smooth_2d} where the two-dimensional smooth manufactured solution is applied. 

The numerical assessment of the pressure robustness of our method is also carried out for this manufactured solution. The examination is conducted by perturbing the solution in pressure on two meshes, one consisting of 48 elements and the other with 24576 elements, with $k=2$ and various values of $\p_0$. Table \tabref{pressure_robust_3d} details the results. Similar to the two-dimensional case presented in Table \tabref{pressure_robust_2d}, the $\Lsp^2$-errors in velocity and magnetic field are independent of pressure on different meshes.   

\begin{table}[!htb]
\centering
\begin{tabular}{|c|cccccc|cc|}
\multicolumn{9}{c}{$\Rey=\Rm=1,\kappa=1$} \\
\hline
 &
\begin{tabular}{@{}c@{}} $\Rey\LbH$ \end{tabular} & 
\begin{tabular}{@{}c@{}} $\ubH$\end{tabular} & 
\begin{tabular}{@{}c@{}} $\pH$ \end{tabular} & 
\begin{tabular}{@{}c@{}} $\frac{\Rm}{\kappa}\JbH$\end{tabular} & 
\begin{tabular}{@{}c@{}} $\bbH$\end{tabular} & 
\begin{tabular}{@{}c@{}} $\rH$\end{tabular} & 
$\norm{\Div{\bu_h}}_{\infty}$ & $\norm{\Div{\bb_h}}_{\infty}$\\
\hline
$k=1$ & 0.72 &	1.78 &	1.81 &	1.02 &	2.04 &	1.95 &	6.19E-13 & 2.11E-13 \\ 
$k=2$ & 2.21 &	3.50 &	2.85 &	2.21 &	3.23 &	2.79 &	1.41E-12 & 1.29E-12 \\ 
$k=3$ & 3.08 &	3.99 &	3.66 &	3.22 &	4.19 &	3.75 &	9.07E-10 & 1.96E-11 \\ 
$k=4$ & 4.22 &	5.23 &	4.73 &	4.24 &	5.23 &	4.70 &	3.66E-09 & 8.06E-11 \\  
\hline
\multicolumn{9}{c}{$\Rey=\Rm=1000,\kappa=1$} \\
\hline
$k=1$ & 0.47 &	1.38 &	1.89 &	0.56 &	0.77 &	1.96 &	5.73E-13 & 2.13E-13 \\ 
$k=2$ & 1.28 &	2.25 &	3.01 &	1.31 &	2.19 &	2.93 &	1.64E-12 & 1.45E-12 \\ 
$k=3$ & 3.64 &	4.69 &	3.92 &	3.77 &	4.72 &	3.93 &	1.46E-09 & 1.93E-11 \\ 
$k=4$ & 3.94 &	4.22 &	4.91 &	4.03 &	4.29 &	5.49 &	5.55E-09 & 5.94E-09 \\ 
\hline
\end{tabular}
\caption{Convergence rates of all local variables and divergence errors of velocity and magnetic fields for the E-HDG method applied to solve the three-dimensional problem with a smooth manufactured solution given in \eqnref{smooth_3d} where we set $\p_0=1$. The corresponding results are also presented in Figure \figref{lin_smooth_3d}. In this table, the convergence rates are evaluated at the last two data sets and the divergence errors are evaluated at the last data set.}\tablab{lin_smooth_3d}
\end{table}

\begin{table}[!htb]
\centering
\resizebox{\textwidth}{!}{
\begin{tabular}{|c|c|c|c|c|c|c|c|c|}
\multicolumn{9}{c}{48 elements in total, $h\approx1.06E-1$} \\
\hline
$p_0$ &
$\Rey\norm{\Lb-\LbH}_0$ &
$\norm{\ub-\ubH}_0$ &
$\norm{\p-\pH}_0$ &
$\frac{\Rm}{\kappa}\norm{\Jb-\JbH}_0$ &
$\norm{\bb-\bbH}_0$ &
$\norm{\r-\rH}_0$ &
$\norm{\Div{\ub_h}}_{\infty}$ &
$\norm{\Div{\bb_h}}_{\infty}$ \\
\hline
1   & 7.52E-2  & 2.69E-3 & 1.59 & 6.42E-2 & 2.42E-3 & 1.29 & 2.42E-13 & 2.19E-13	\\
10  & 7.52E-2  & 2.69E-3 & 5.99 & 6.42E-2 & 2.42E-3 & 1.29 & 2.99E-13 & 2.48E-13	\\
25  & 7.52E-2  & 2.69E-3 & 15.57 & 6.42E-2 & 2.42E-3 & 1.29 & 3.09E-13 & 2.53E-13	\\ 
100 & 7.52E-2  & 2.69E-3 & 64.09 & 6.42E-2 & 2.42E-3 & 1.29 & 2.89E-13 & 2.60E-13	\\  
\hline
\multicolumn{9}{c}{24576 elements in total, $h\approx2.64E-2$} \\
\hline
1   & 3.19E-3  & 2.10E-5 & 3.16E-2 & 2.70E-3 & 2.53E-5 & 2.90E-2 & 1.40E-12 & 1.43E-12	\\
10  & 3.19E-3  & 2.10E-5 & 5.83 & 2.70E-3 & 2.53E-5 & 2.90E-2 & 1.40E-12 & 1.31E-12	\\ 
25  & 3.19E-3  & 2.10E-5 & 15.55 & 2.70E-3 & 2.53E-5 & 2.90E-2 & 1.41E-12 & 1.22E-12	\\  
100 & 3.19E-3  & 2.10E-5 & 64.13 & 2.70E-3 & 2.53E-5 & 2.90E-2 & 1.32E-12 & 1.44E-12	\\
\hline
\end{tabular}
}
\caption{The errors in the local variables for the smooth manufactured solution given in \eqnref{smooth_3d} for meshes of 48 and 24576 elements, a polynomial degree of $k=2$, and a range of $p_0$ values. We set $\Rey=\Rm=1$ and $\kappa=1$.}
\tablab{pressure_robust_3d}
\end{table}

\begin{figure*}
    \centering
    \resizebox{\textwidth}{!}{
    \begin{tabular}{ccc}
     \centered{\includegraphics{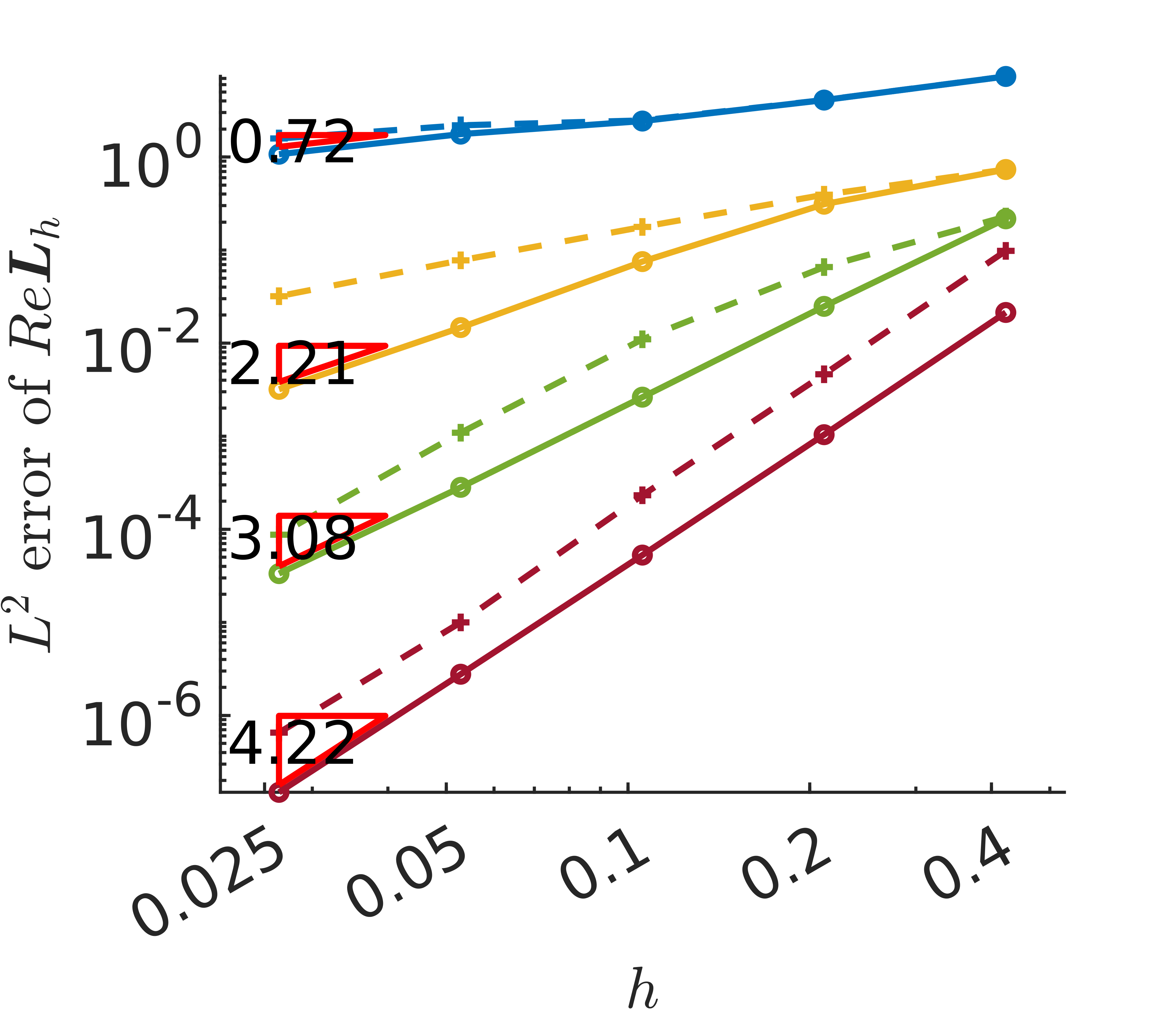} }& 
     \centered{\includegraphics{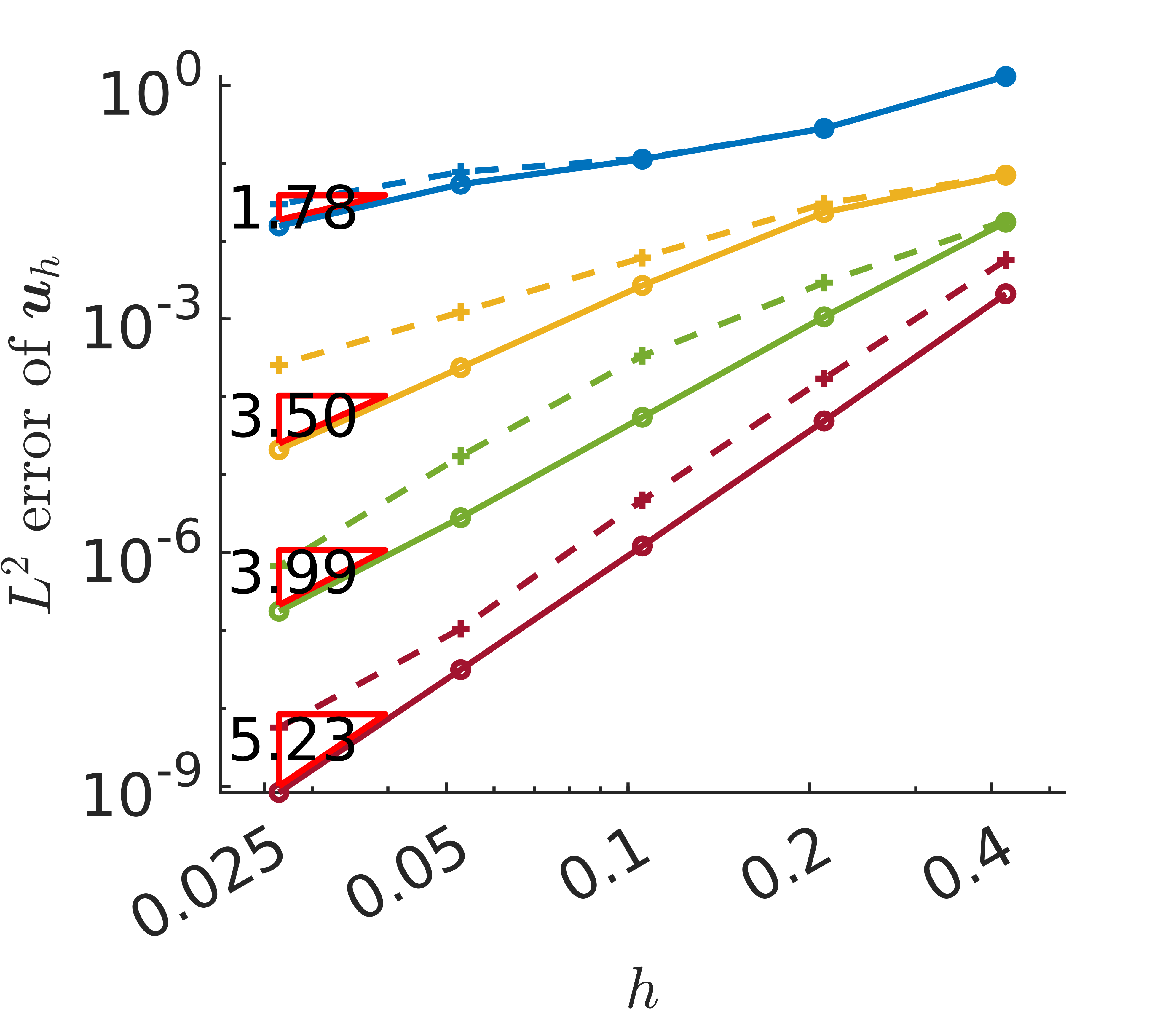}} &
     \centered{\includegraphics{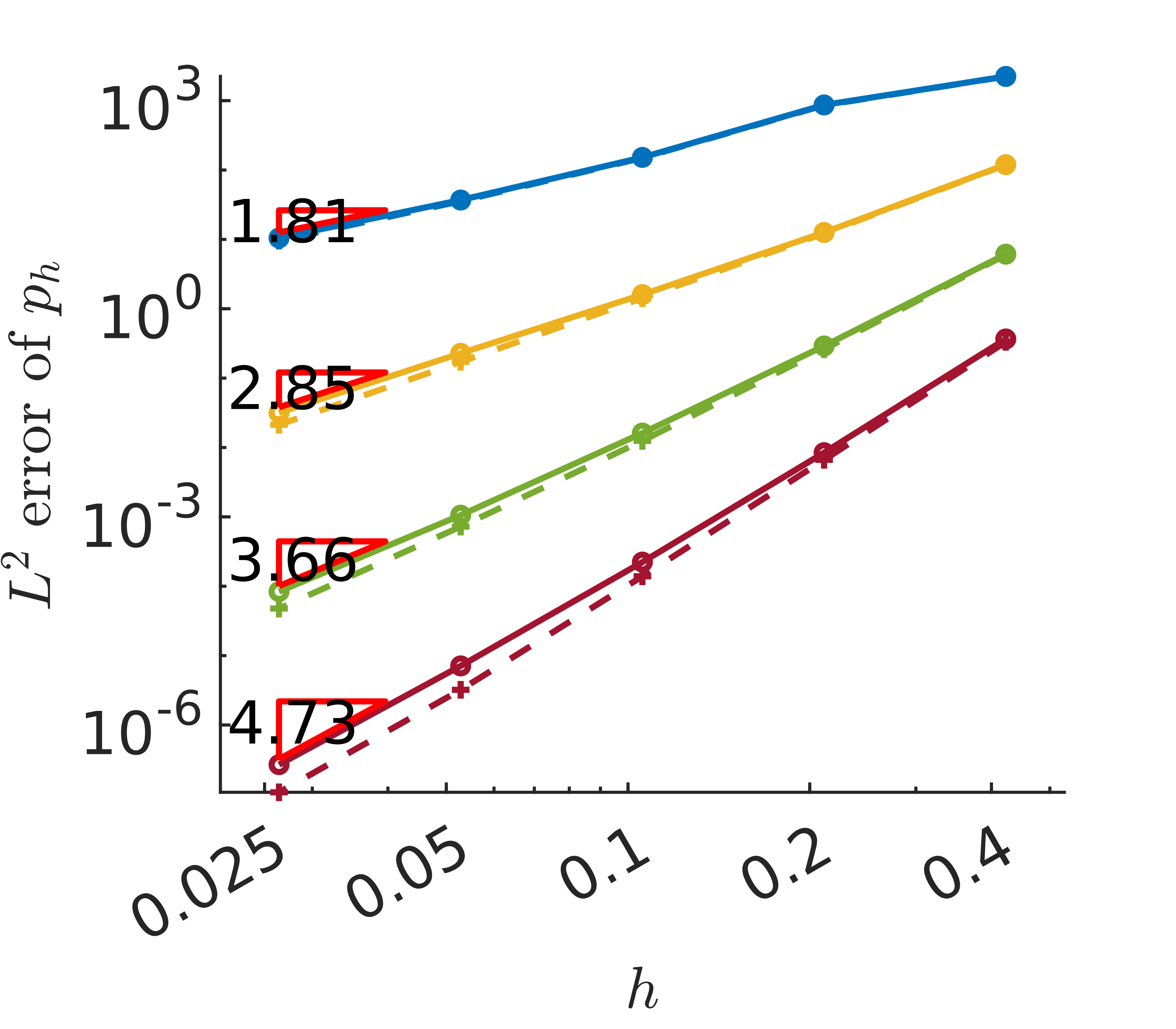}}\\
     \centered{\includegraphics{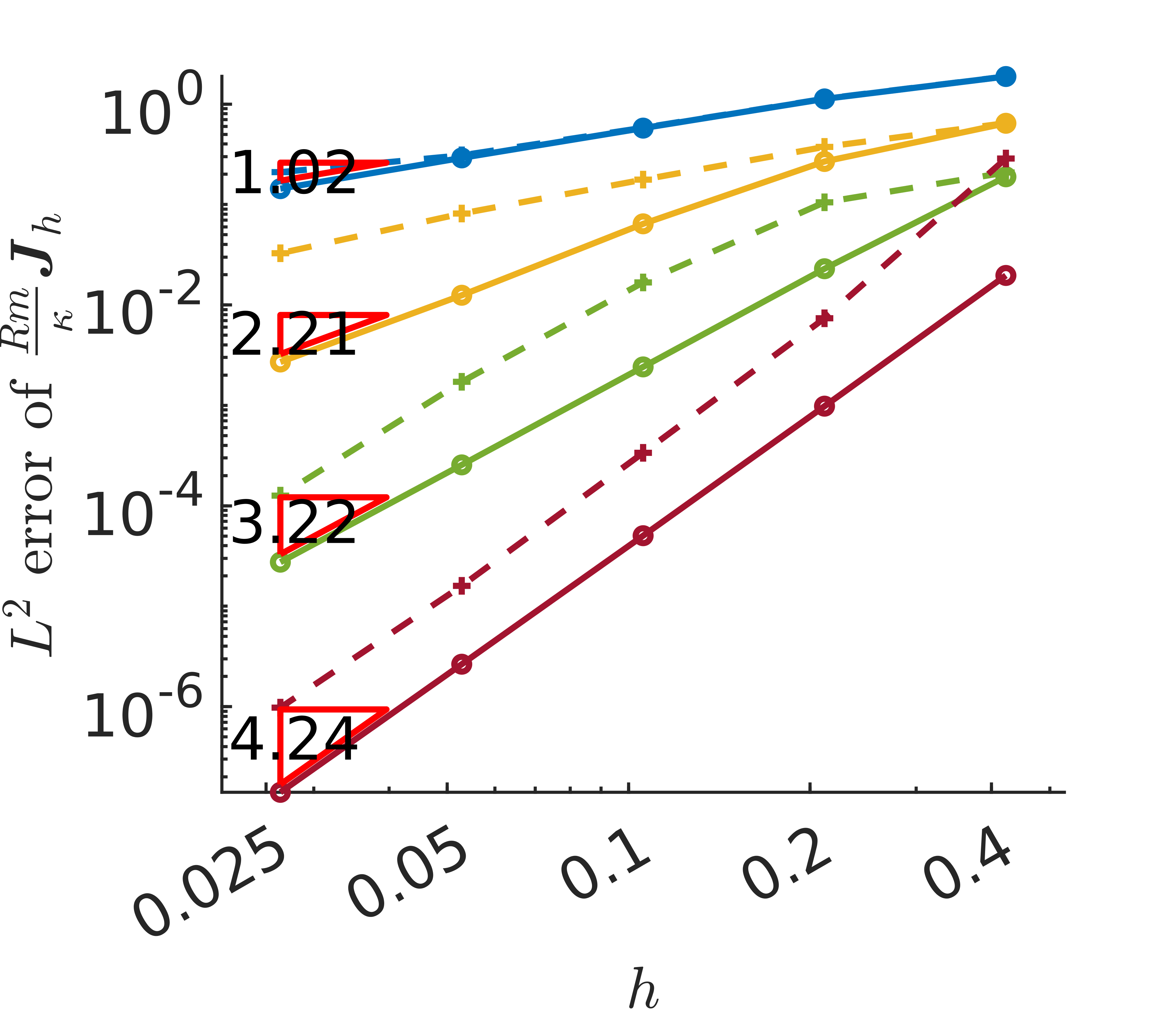}} &
     \centered{\includegraphics{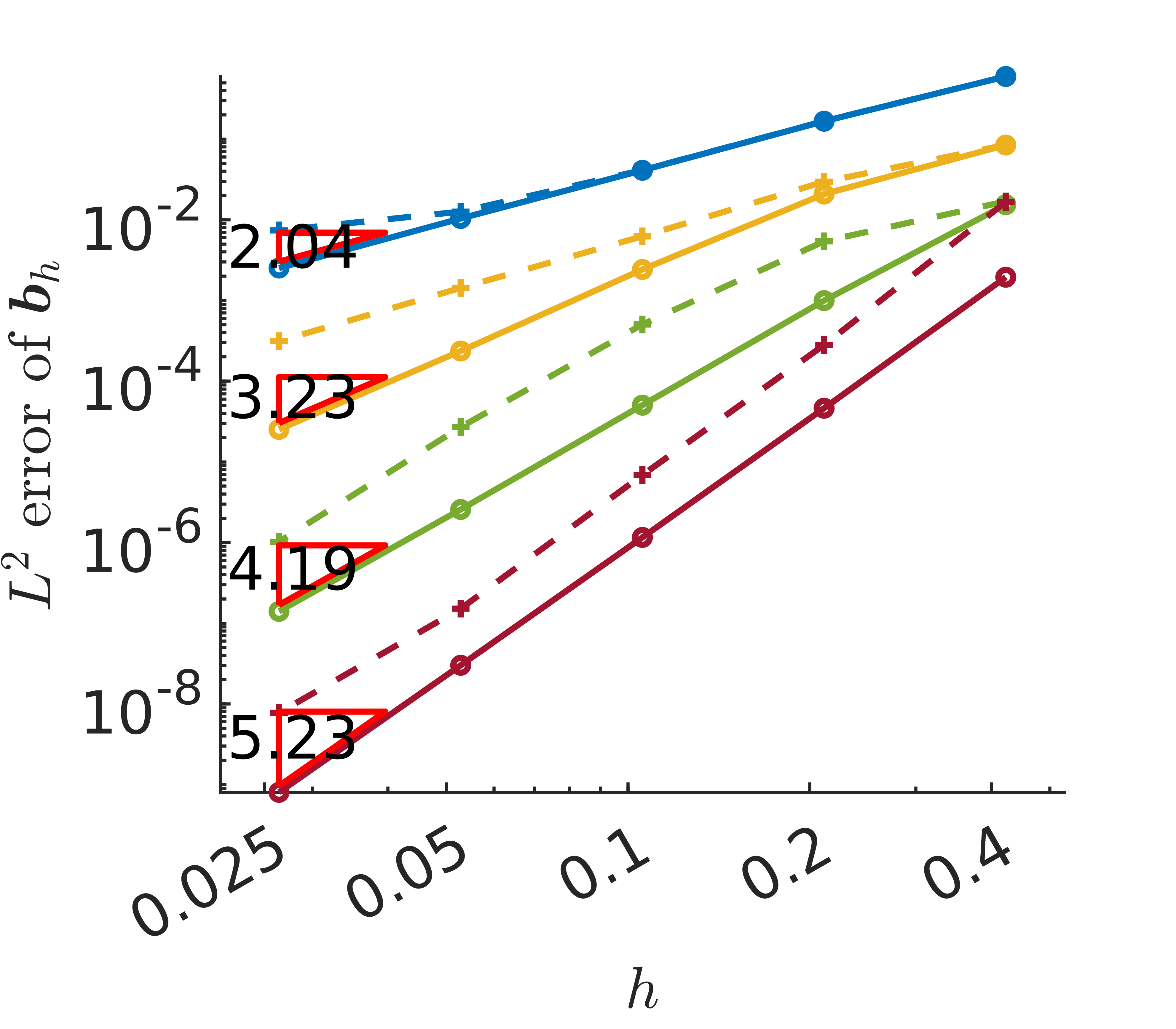}} &
     \centered{\includegraphics{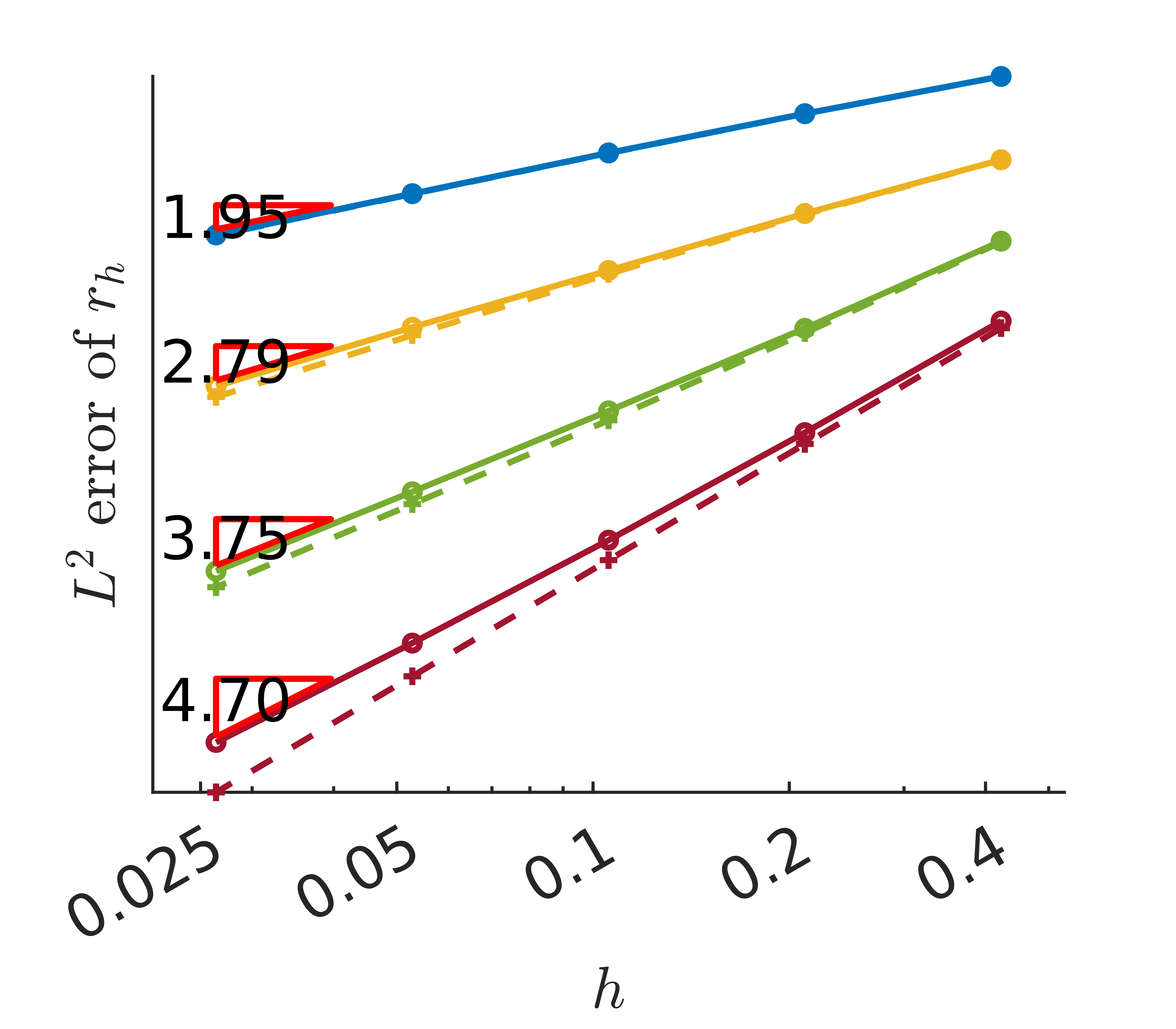}}\\
     \centered{\includegraphics{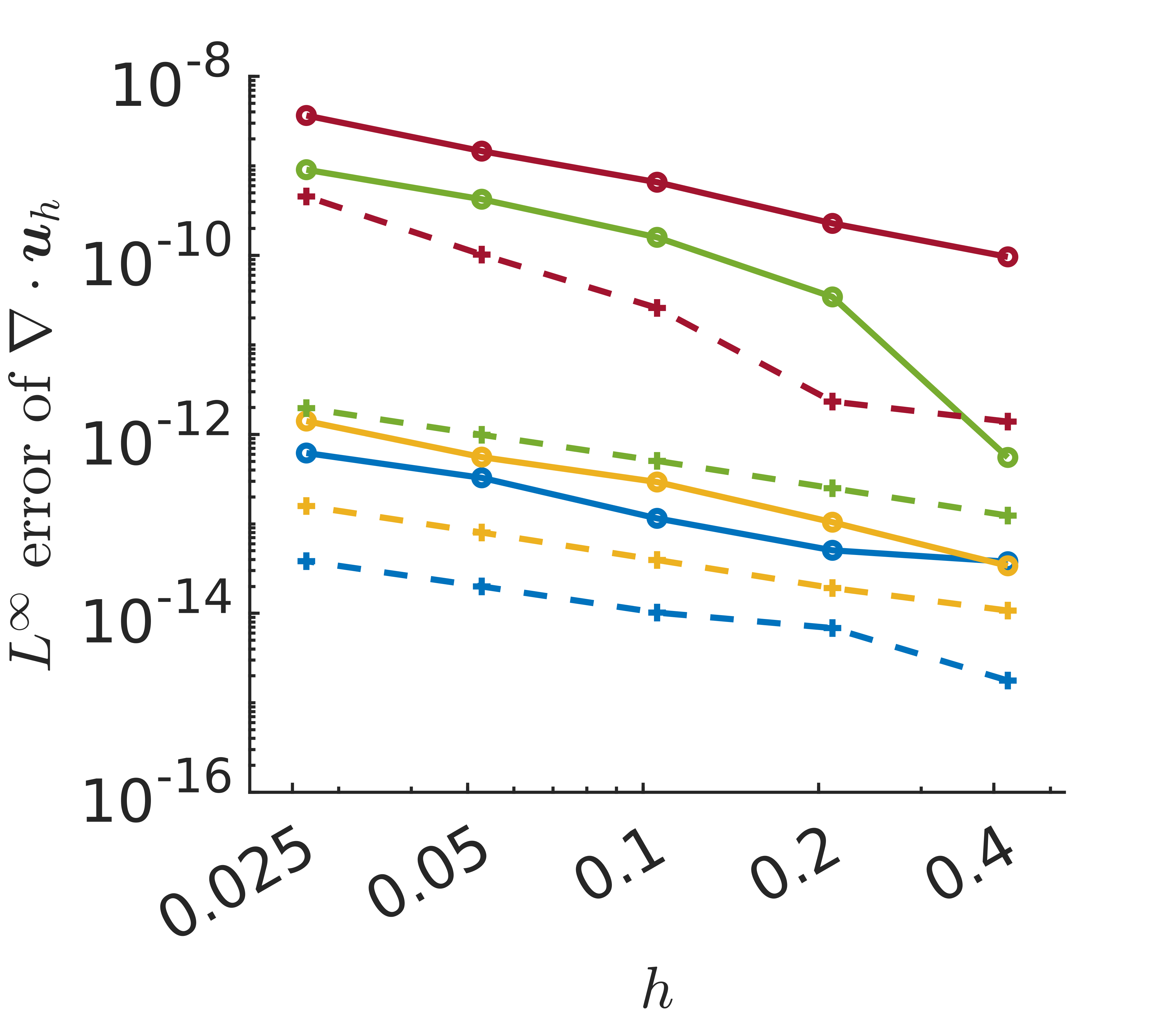}} &
     \centered{\includegraphics{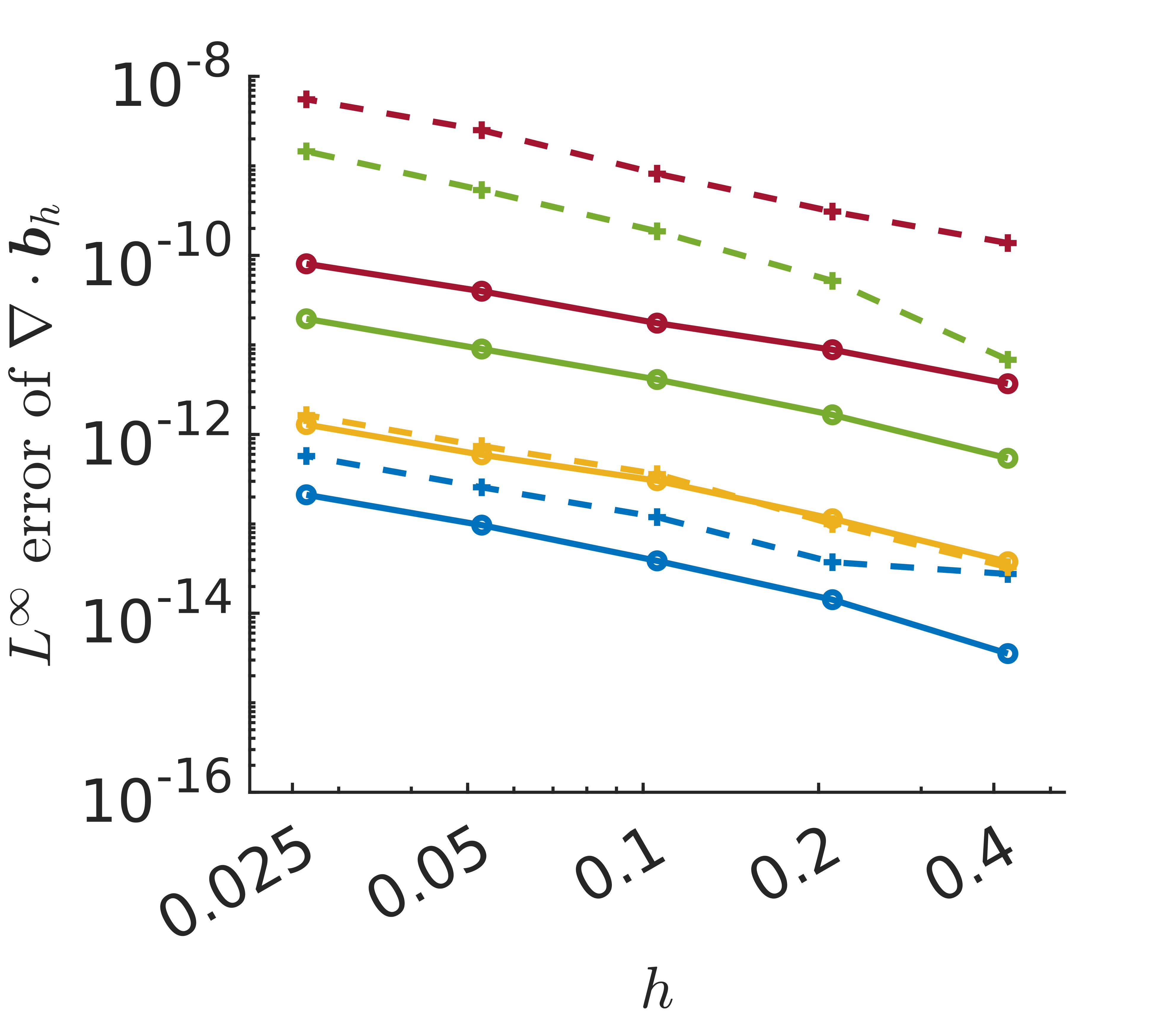}} &
     \centered{\includegraphics{conv_legend_multi.png}}
    \end{tabular}
    }
    \caption{Convergence histories of all local variables and divergence errors for the E-HDG method applied to solve the three-dimensional problem with a smooth manufactured solution given in \eqnref{smooth_3d} where we set $\p_0=1$. Only the convergence rates for $\Rey=\Rm=1$ are presented here.}\figlab{lin_smooth_3d}
\end{figure*}

\subsection{Nonlinear examples}\seclab{nonlinear}
To verify our nonlinear solver, we conducted several numerical experiments and studied the accuracy and convergence. The first example is the two-dimensional smooth manufactured solution, the second one is the so-called Hartmann flow problem, and the last one is the three-dimensional smooth manufactured solution. 

\subsubsection{Two-dimensional smooth manufactured solution}\seclab{nonlin_smooth_2d}
Our first numerical experiment for the nonlinear solver is a steady manufactured solution. In particular, we use the same solution presented in Section \secref{lin_smooth_2d} to investigate the convergence. The results are presented in Table \tabref{nonlin_smooth_2d} and are illustrated in Figure \figref{nonlin_smooth_2d}. The observed convergence rates are almost the same as the rates observed in the linear problem presented in Section \secref{lin_smooth_2d}. Moreover, the divergence errors also exhibit the same order of magnitude.  

\begin{table}[!htb]
\centering
\begin{tabular}{|c|cccccc|cc|}
\multicolumn{9}{c}{$\Rey=\Rm=1,\kappa=1$} \\
\hline
 &
\begin{tabular}{@{}c@{}} $\Rey\LbH$ \end{tabular} & 
\begin{tabular}{@{}c@{}} $\ubH$\end{tabular} & 
\begin{tabular}{@{}c@{}} $\pH$ \end{tabular} & 
\begin{tabular}{@{}c@{}} $\frac{\Rm}{\kappa}\JbH$\end{tabular} & 
\begin{tabular}{@{}c@{}} $\bbH$\end{tabular} & 
\begin{tabular}{@{}c@{}} $\rH$\end{tabular} & 
$\norm{\Div{\bu_h}}_{\infty}$ & $\norm{\Div{\bb_h}}_{\infty}$\\
\hline
$k=1$ & 1.02 &	2.29 &	1.12 &	1.20 &	2.40 &	1.96 &	4.02E-15 & 3.72E-15 \\ 
$k=2$ & 2.02 &	3.08 &	2.21 &	2.28 &	3.06 &	2.73 &	2.39E-14 & 2.66E-14 \\ 
$k=3$ & 3.04 &	4.06 &	3.17 &	3.35 &	4.03 &	3.75 &	7.75E-14 & 7.65E-14 \\ 
$k=4$ & 4.03 &	5.08 &	4.28 &	4.51 &	4.91 &	4.79 &	2.81E-13 & 5.65E-13 \\     
\hline
\multicolumn{9}{c}{$\Rey=\Rm=1000,\kappa=1$} \\
\hline
$k=1$ & 1.27 &	1.35 &	0.99 &	1.38 &	1.47 &	0.65 &	2.69E-15 & 2.83E-15 \\ 
$k=2$ & 2.93 &	4.04 &	2.02 &	3.01 &	4.14 &	2.40 &	2.61E-14 & 2.35E-14 \\ 
$k=3$ & 3.98 &	5.28 &	3.02 &	3.85 &	5.18 &	3.81 &	1.13E-12 & 1.10E-13 \\ 
$k=4$ & 4.16 &	5.20 &	4.00 &	4.17 &	5.20 &	4.39 &	2.78E-12 & 1.73E-12 \\  
\hline
\end{tabular}
\caption{Convergence rates of all local variables and divergence errors of velocity and magnetic fields for the nonlinear solver applied to solve the two-dimensional problem with a smooth manufactured solution given in \eqnref{smooth_2d} where we set $\p_0=1$. The corresponding results are also presented in Figure \figref{nonlin_smooth_2d}. In this table, the convergence rates are evaluated at the last two data sets and the divergence errors are evaluated at the last data set.}\tablab{nonlin_smooth_2d}
\end{table}

\begin{figure*}
    \centering
    \resizebox{\textwidth}{!}{
    \begin{tabular}{ccc}
     \centered{\includegraphics{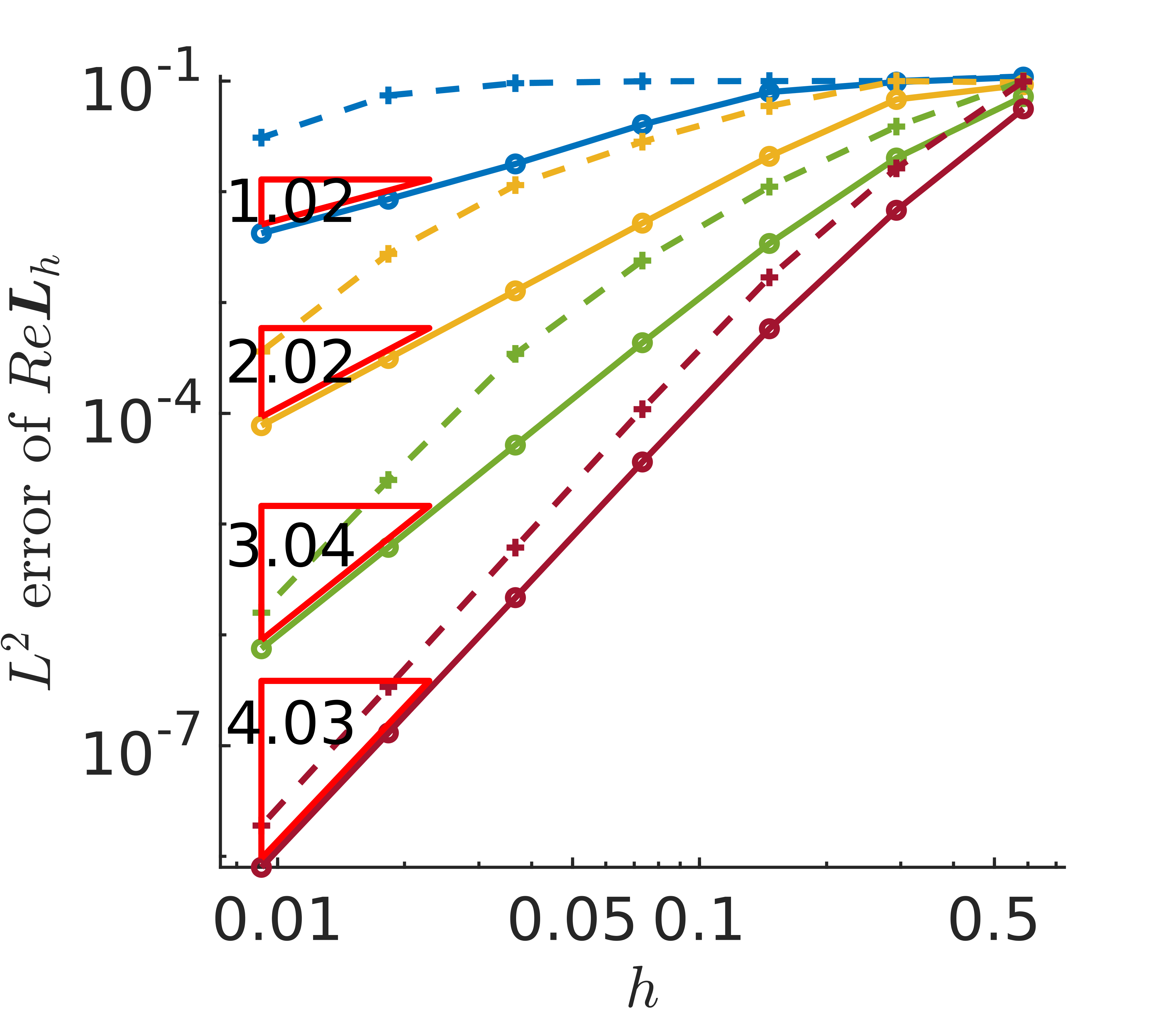} }& 
     \centered{\includegraphics{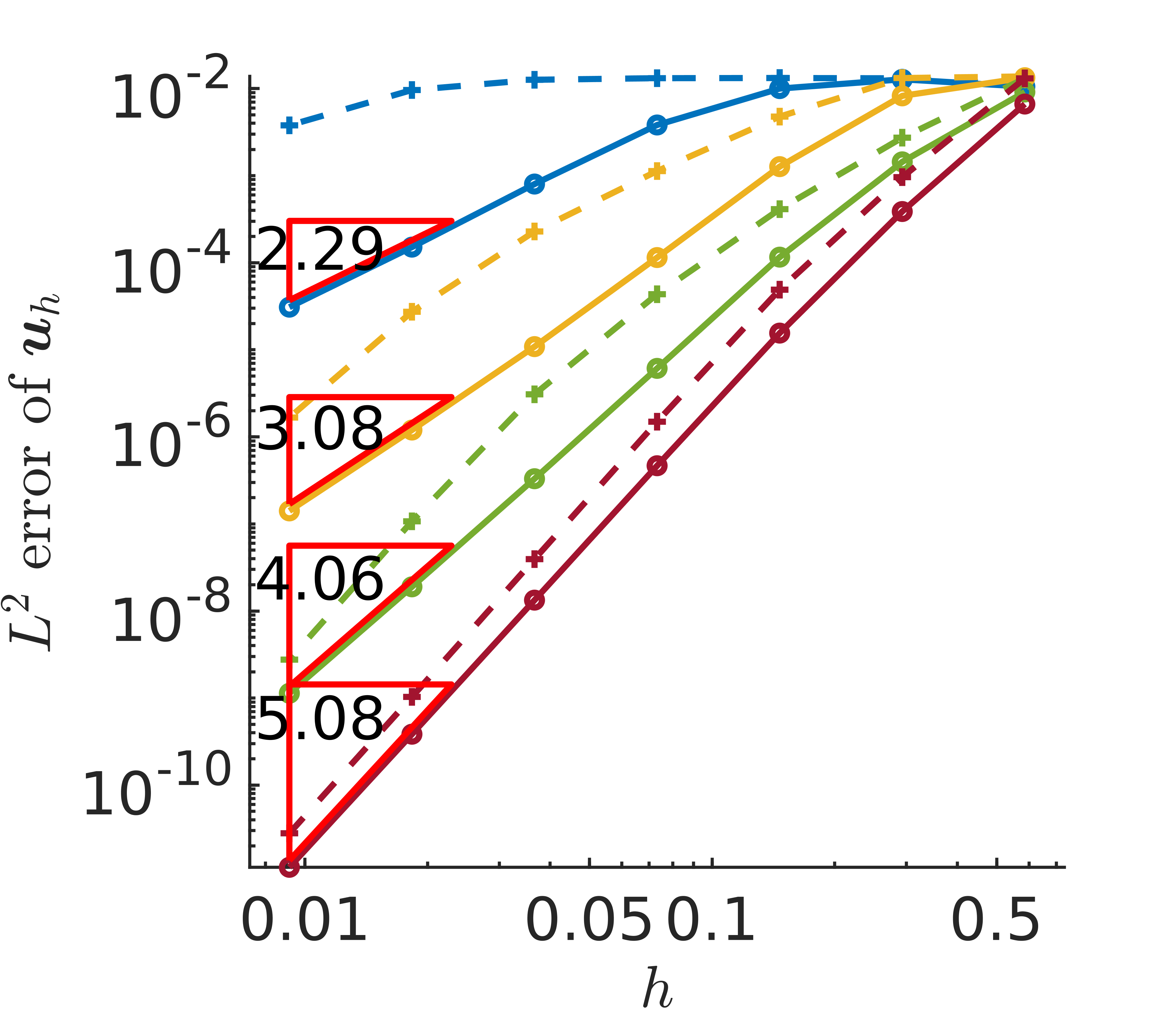}} &
     \centered{\includegraphics{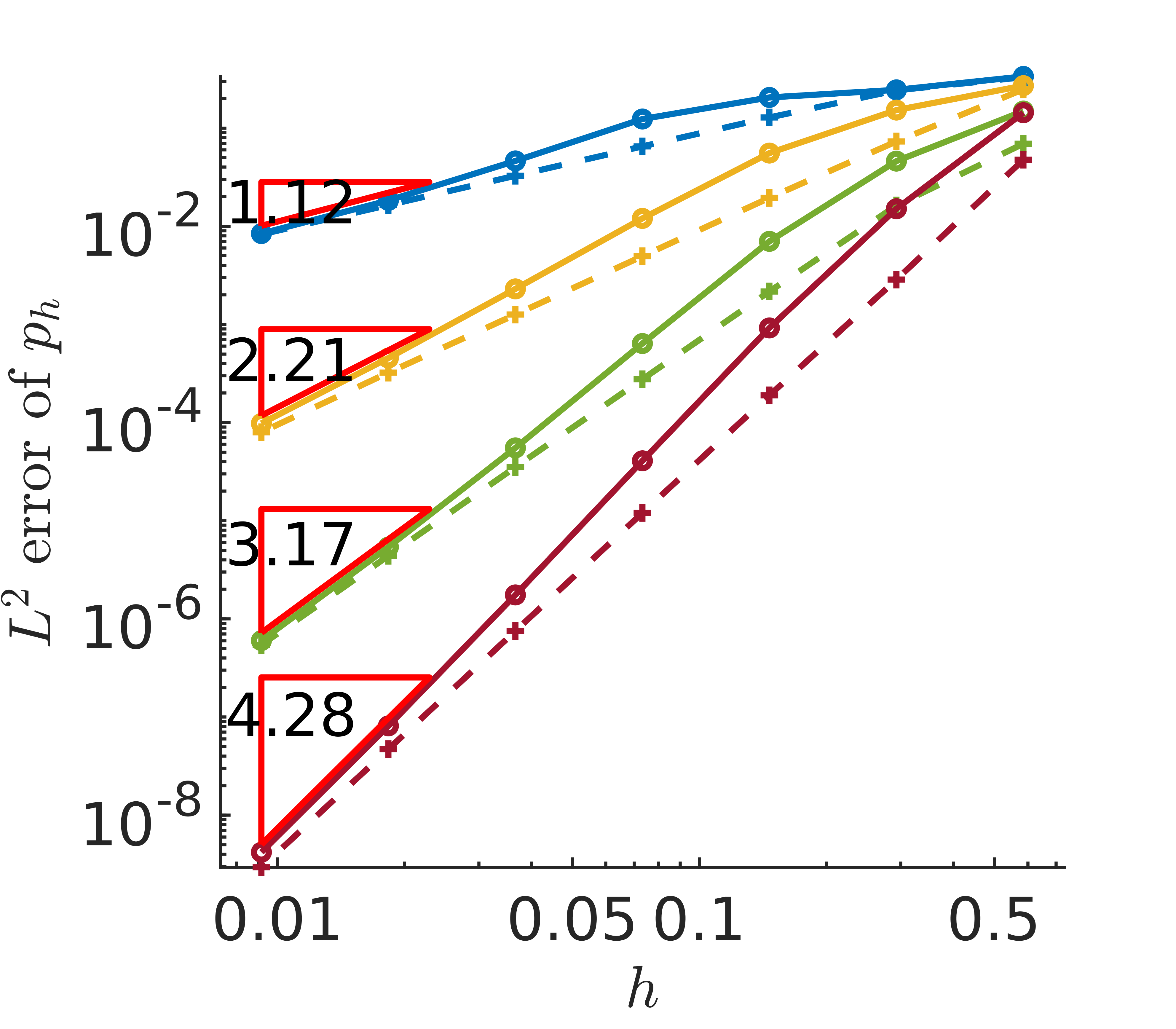}}\\
     \centered{\includegraphics{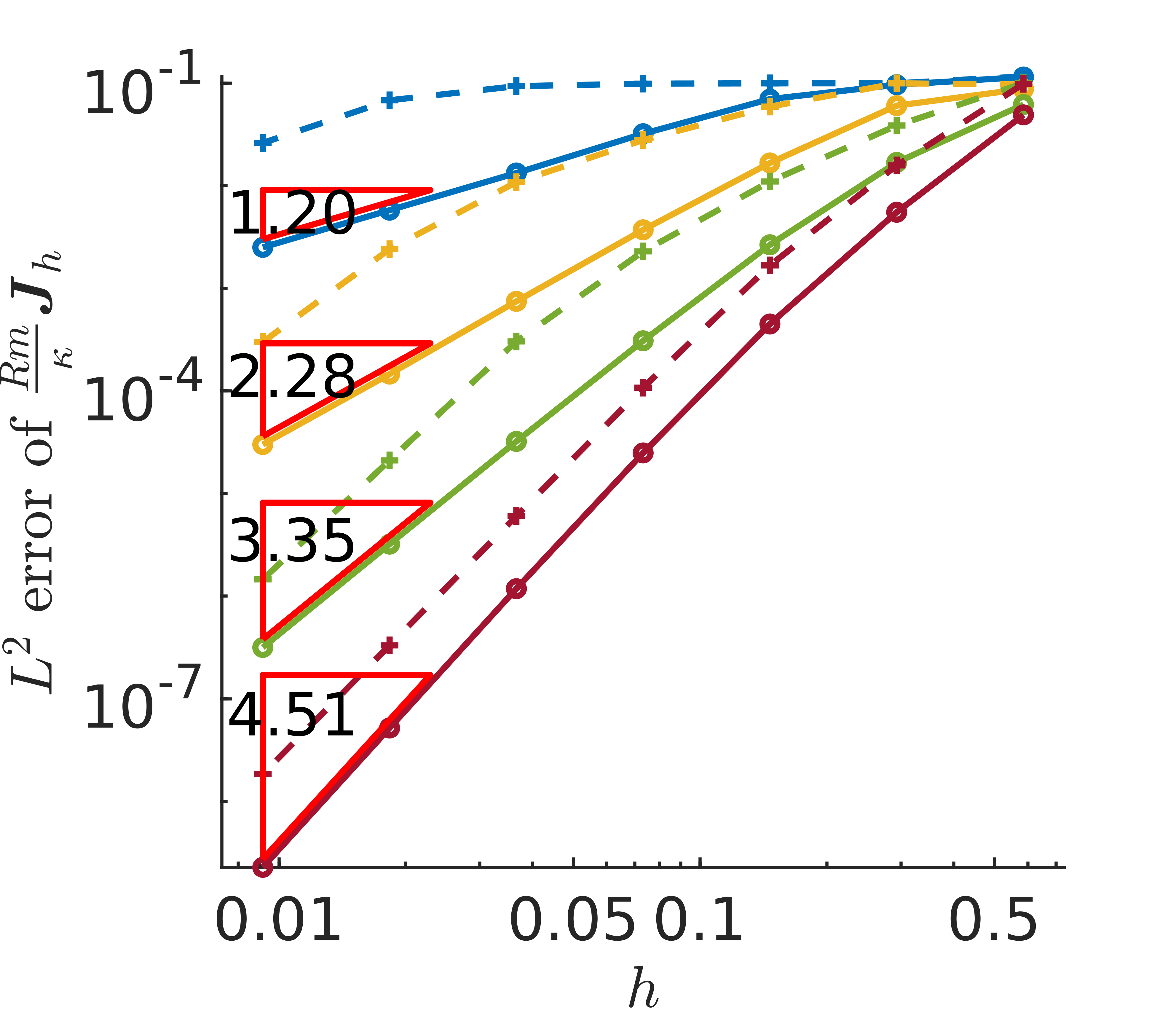}} &
     \centered{\includegraphics{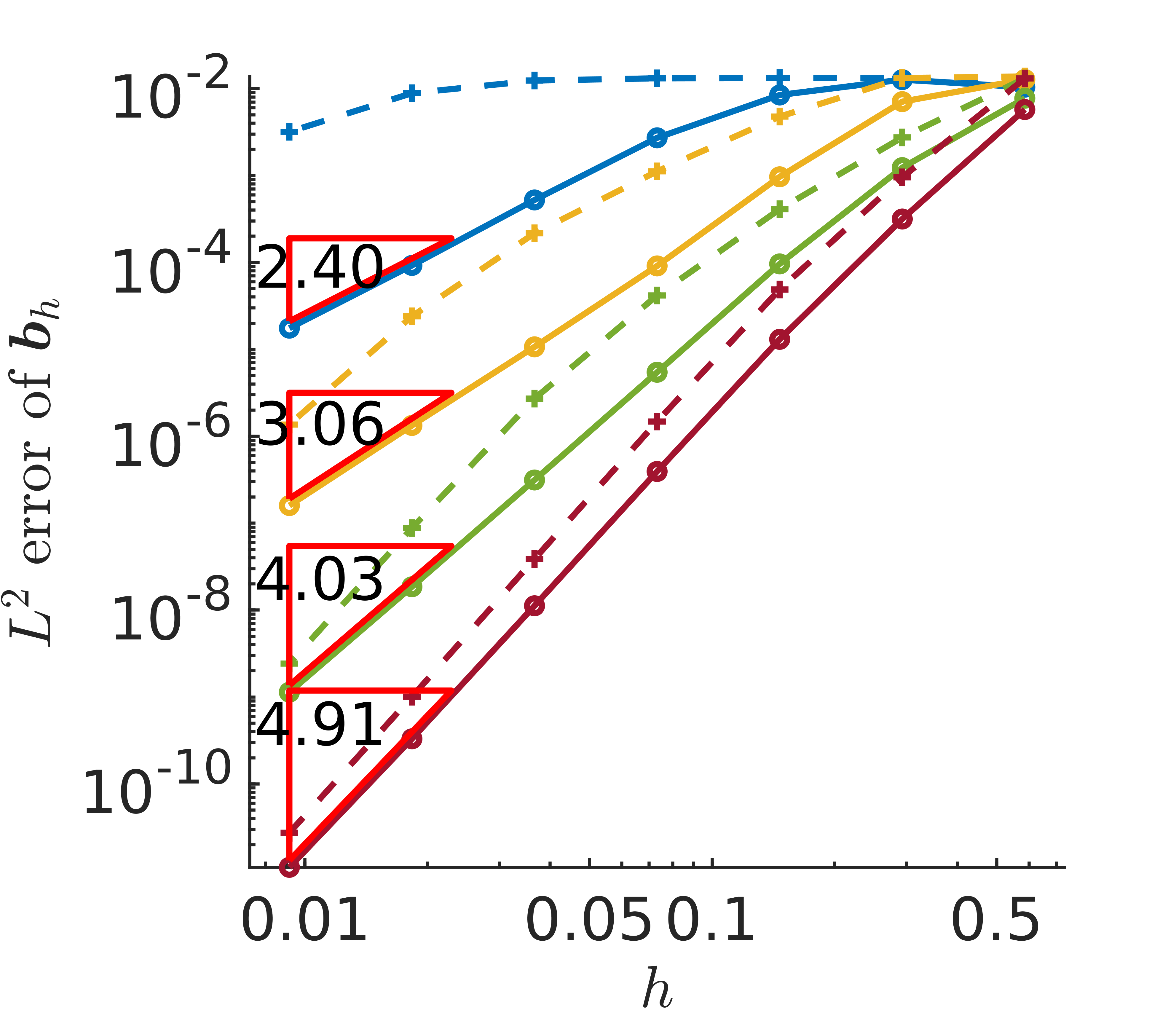}} &
     \centered{\includegraphics{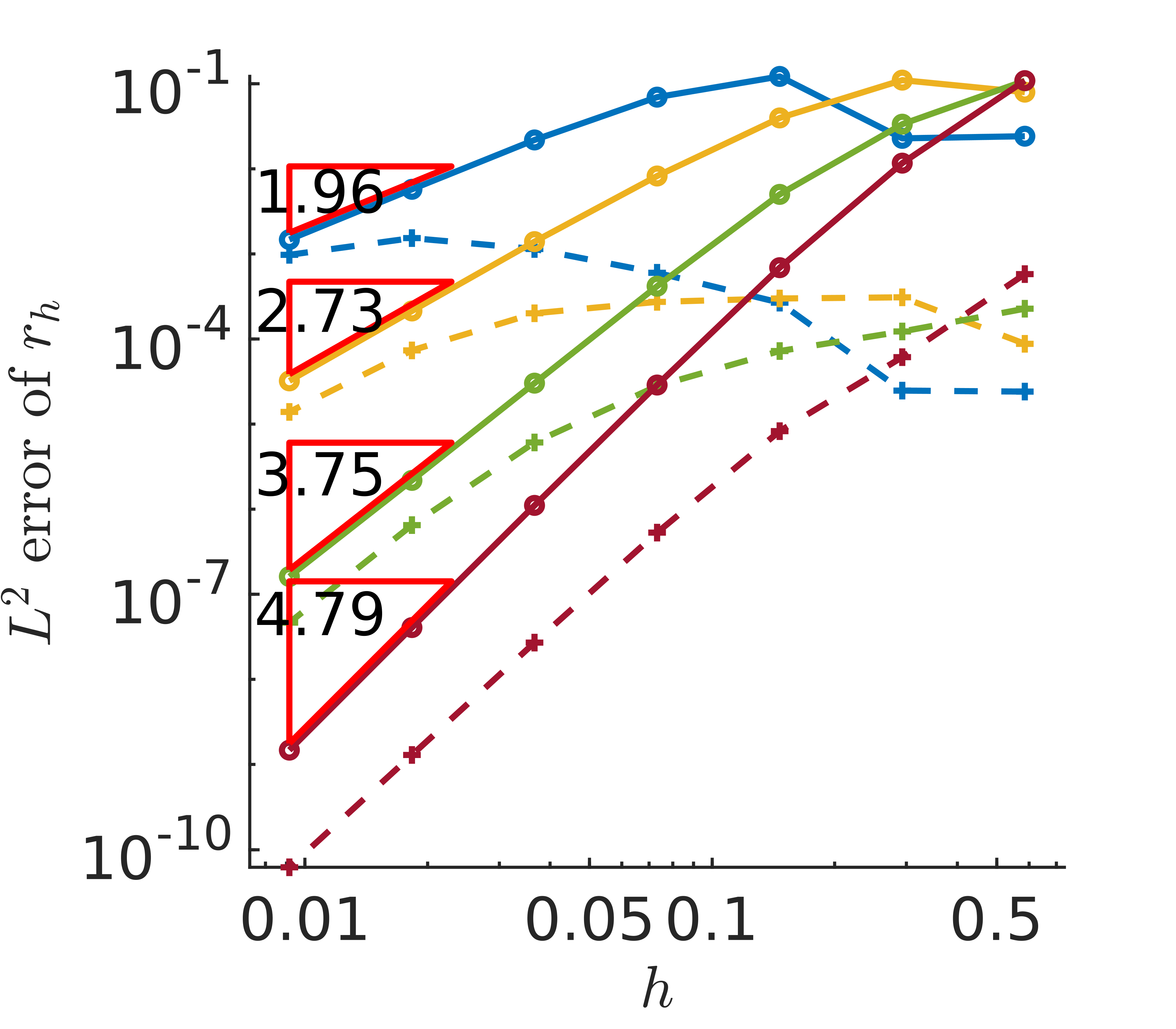}}\\
     \centered{\includegraphics{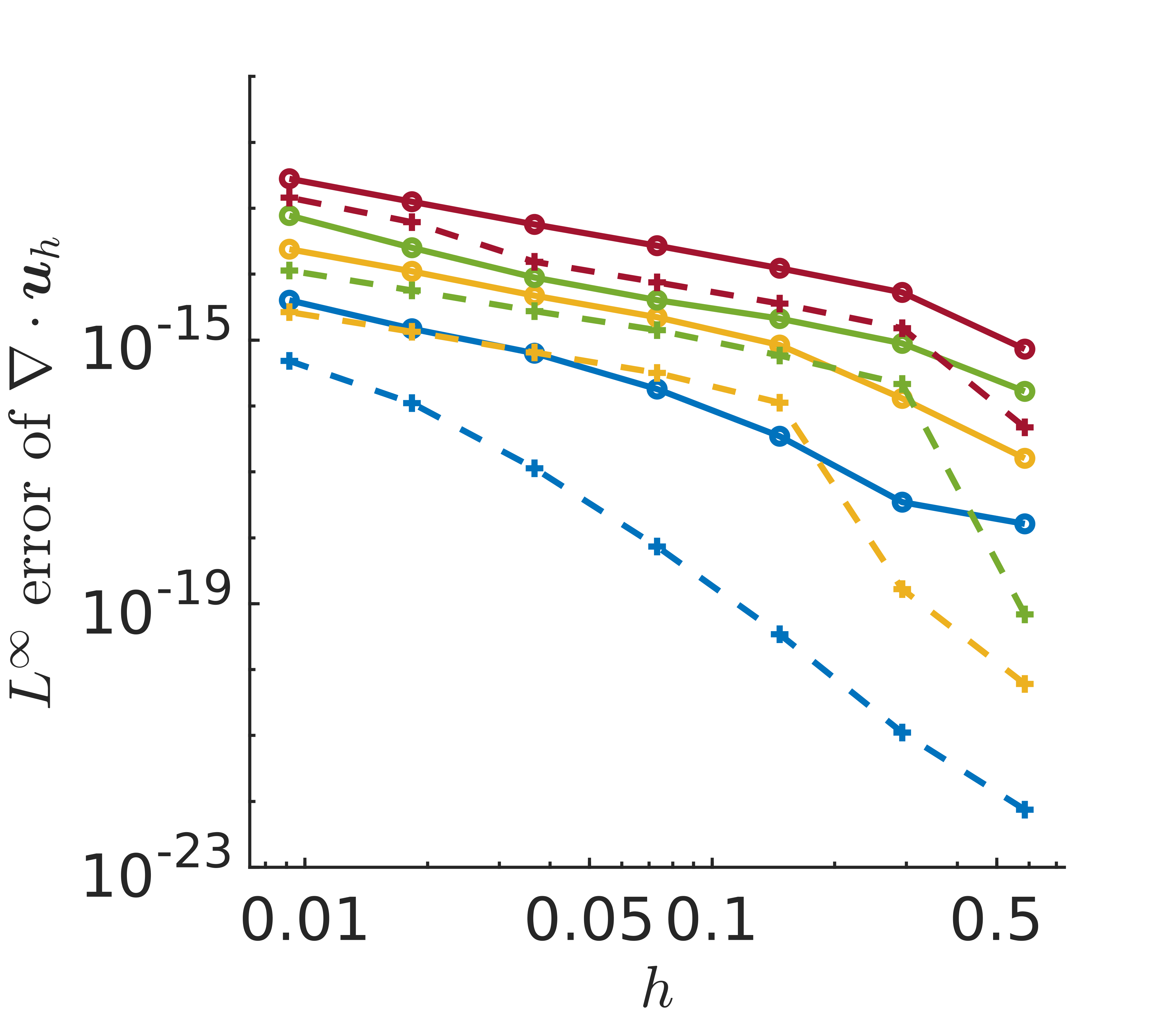}} &
     \centered{\includegraphics{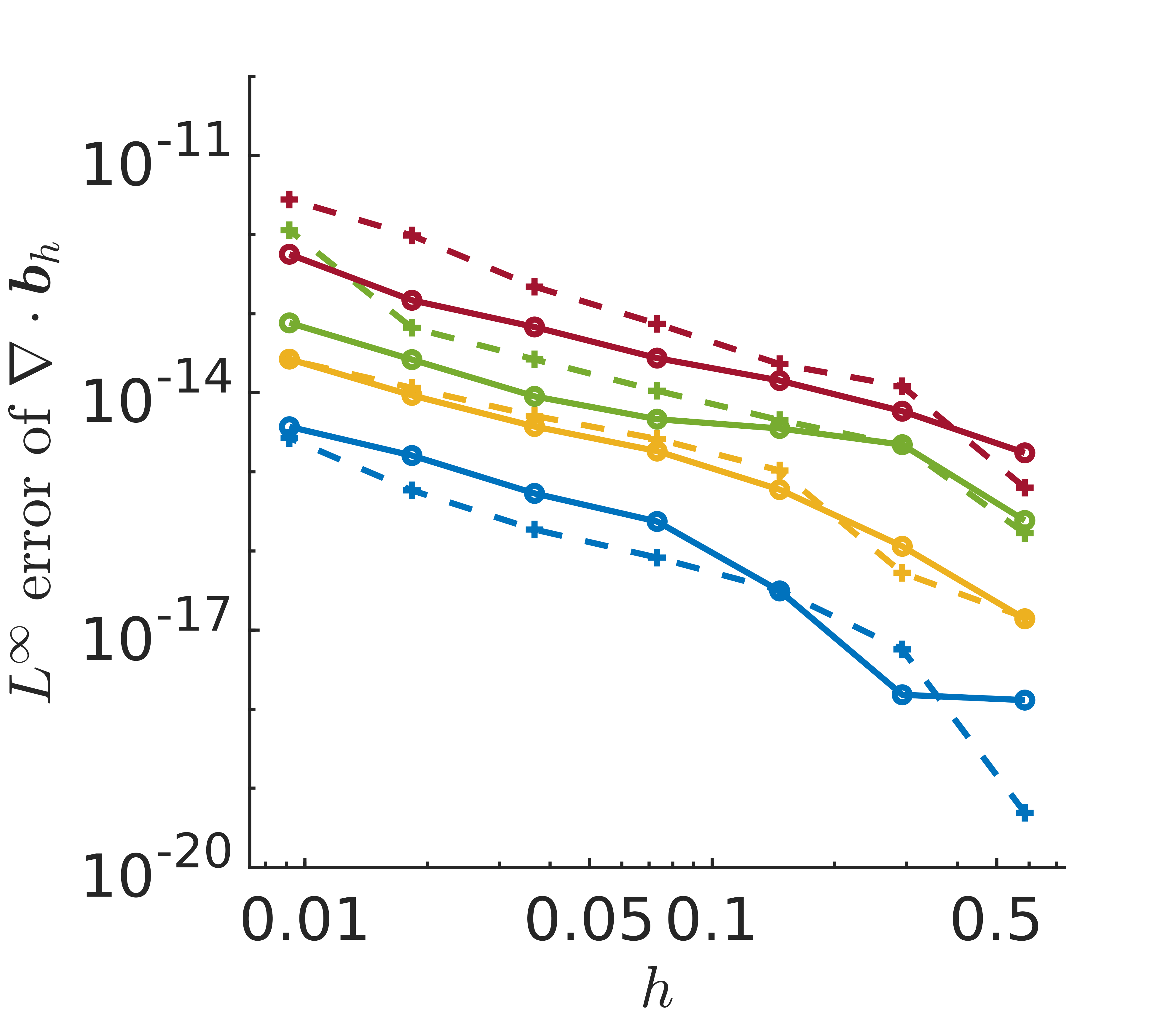}} &
     \centered{\includegraphics{conv_legend_multi.png}}
    \end{tabular}
    }
    \caption{Convergence histories of all local variables and divergence errors for the nonlinear solver applied to solve the two-dimensional problem with a smooth manufactured solution given in \eqnref{smooth_2d} where we set $\p_0=1$. Only the convergence rates for $\Rey=\Rm=1$ are presented here.}\figlab{nonlin_smooth_2d}
\end{figure*}

\subsubsection{Two-dimensional Hartmann flow}\seclab{nonlin_hartmann_2d}
We next consider the Hartmann channel flow, a generalization of the classic plane Poiseuille problem to the setting of the incompressible visco-resistive MHD. In this problem, a conducting incompressible fluid (liquid metal, for example) in a domain $(-\infty,\infty)\times(-l_0,l_0)\times(-\infty,\infty)$ (bounded by infinite parallel plates in the $x_2$ direction) is driven by a uniform pressure gradient $G:=-\pp{\p}{x_1}$ in the $x_1$ direction, and is subject to a uniform external magnetic field $b_0$ in the $x_2$ direction. In addition, we enforce no-slip boundary conditions on the $x_2$ boundaries and assume the infinite parallel plates are perfectly insulating. The resulting flow pattern admits an analytical solution that is one dimension in nature. In this numerical study, we consider the simulation of Hartmann flow in a two-dimensional domain $\Omega=(0,0.025)\times(-1,1)$. If we define the characteristic velocity as $u_0:=\sqrt{Gl_0/\rho}$ and consider the driving pressure gradient $G$ as a forcing term (incorporated in $\gb$), the nondimensionalized solution with $\gb=(1,0)$, $\fb=\bs{0}$ takes the form (see, i.e., \cite{shadid_towards_2010,shadid_scalable_2016})
\begin{subequations}\eqnlab{hartmann_2d}
\begin{align}
    \begin{split}
        \bu = \LRp{\frac{\Rey}{Ha\tanh{(Ha)}}\LRs{1-\frac{\cosh{(Ha\cdot y)}}{\cosh{(Ha)}}},0},
    \end{split}\\
    \begin{split}
        \bb = \LRp{\frac{1}{\kappa}\LRs{\frac{\sinh{(Ha\cdot y)}}{\sinh{(Ha)}}-y},1},
    \end{split}\\
    \begin{split}
        \p = -\frac{1}{2\kappa}\LRs{\frac{\sinh{(Ha\cdot y)}}{\sinh{(Ha)}}-y}^2-\p_0,
    \end{split}\\
    \begin{split}
        \r = 0
    \end{split}
\end{align}
\end{subequations}
where $Ha:=\sqrt{\kappa\Rey\Rm}$, and $\p_0$ is a constant that enables $\p$ to satisfy the zero average pressure condition. 

At refinement level $l$, the domain is divided into $l\times80l$ squares, each of which is divided into two triangles from top right to bottom left. Figure \figref{nonlin_hartmann_2d} shows the convergence plots with $\Rey=\Rm=7.07$ and $\kappa=200$ and the corresponding convergence rates are summarized in Table \tabref{nonlin_hartmann_2d}. The convergence rates for $\LbH$, $\ubH$, $\pH$, $\JbH$, $\bbH$, and $\rH$ are observed to be approximately $\k$, $\k-1/2$, $\kbr+1$, $\k$, $\k+1/2$, and $\kbr+1$. The observation is consistent with the rates observed in Section \secref{lin_smooth_2d} and \secref{lin_smooth_3d} except for the ones of the velocity and magnetic fields, which are sub-optimal here.  

\begin{table}[!htb]
\centering
\begin{tabular}{|c|cccccc|cc|}
\multicolumn{9}{c}{$\Rey=\Rm=7.07,\kappa=200$} \\
\hline
 &
\begin{tabular}{@{}c@{}} $\Rey\LbH$ \end{tabular} & 
\begin{tabular}{@{}c@{}} $\ubH$\end{tabular} & 
\begin{tabular}{@{}c@{}} $\pH$ \end{tabular} & 
\begin{tabular}{@{}c@{}} $\frac{\Rm}{\kappa}\JbH$\end{tabular} & 
\begin{tabular}{@{}c@{}} $\bbH$\end{tabular} & 
\begin{tabular}{@{}c@{}} $\rH$\end{tabular} & 
$\norm{\Div{\bu_h}}_{\infty}$ & $\norm{\Div{\bb_h}}_{\infty}$\\
\hline
$k=1$ & 1.01 &	3.68 &	1.01 &	1.03 &	1.87 &	1.26 &	3.52E-09 & 5.50E-12 \\ 
$k=2$ & 2.08 &	1.81 &	2.03 &	1.96 &	2.58 &	1.74 &	2.95E-08 & 1.28E-10 \\ 
$k=3$ & 3.20 &	2.59 &	3.16 &	3.55 &	3.64 &	3.18 &	1.30E-07 & 2.90E-10 \\ 
$k=4$ & 4.17 &	3.72 &	4.13 &	4.21 &	4.20 &	3.96 &	3.16E-07 & 7.02E-10 \\   
\hline
\end{tabular}
\caption{Convergence rates of all local variables and divergence errors of velocity and magnetic fields for the nonlinear solver applied to solve the two-dimensional Hartmann flow problem that admits the solution given in \eqnref{hartmann_2d}. The corresponding results are also presented in Figure \figref{nonlin_hartmann_2d}. In this table, the convergence rates are evaluated at the last two data sets and the divergence errors are evaluated at the last data set.}\tablab{nonlin_hartmann_2d}
\end{table}

\begin{figure*}
    \centering
    \resizebox{\textwidth}{!}{
    \begin{tabular}{ccc}
     \centered{\includegraphics{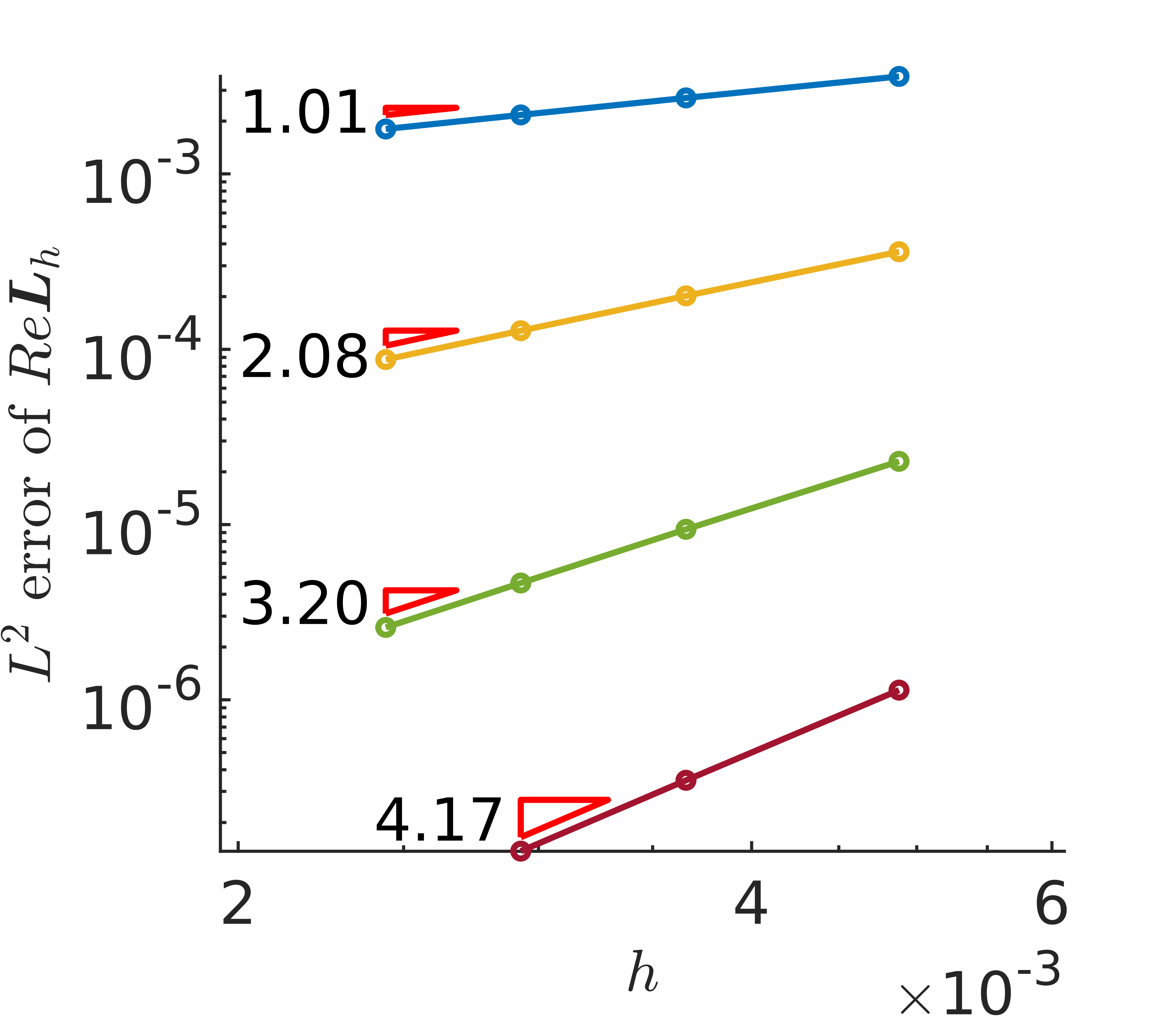} }& 
     \centered{\includegraphics{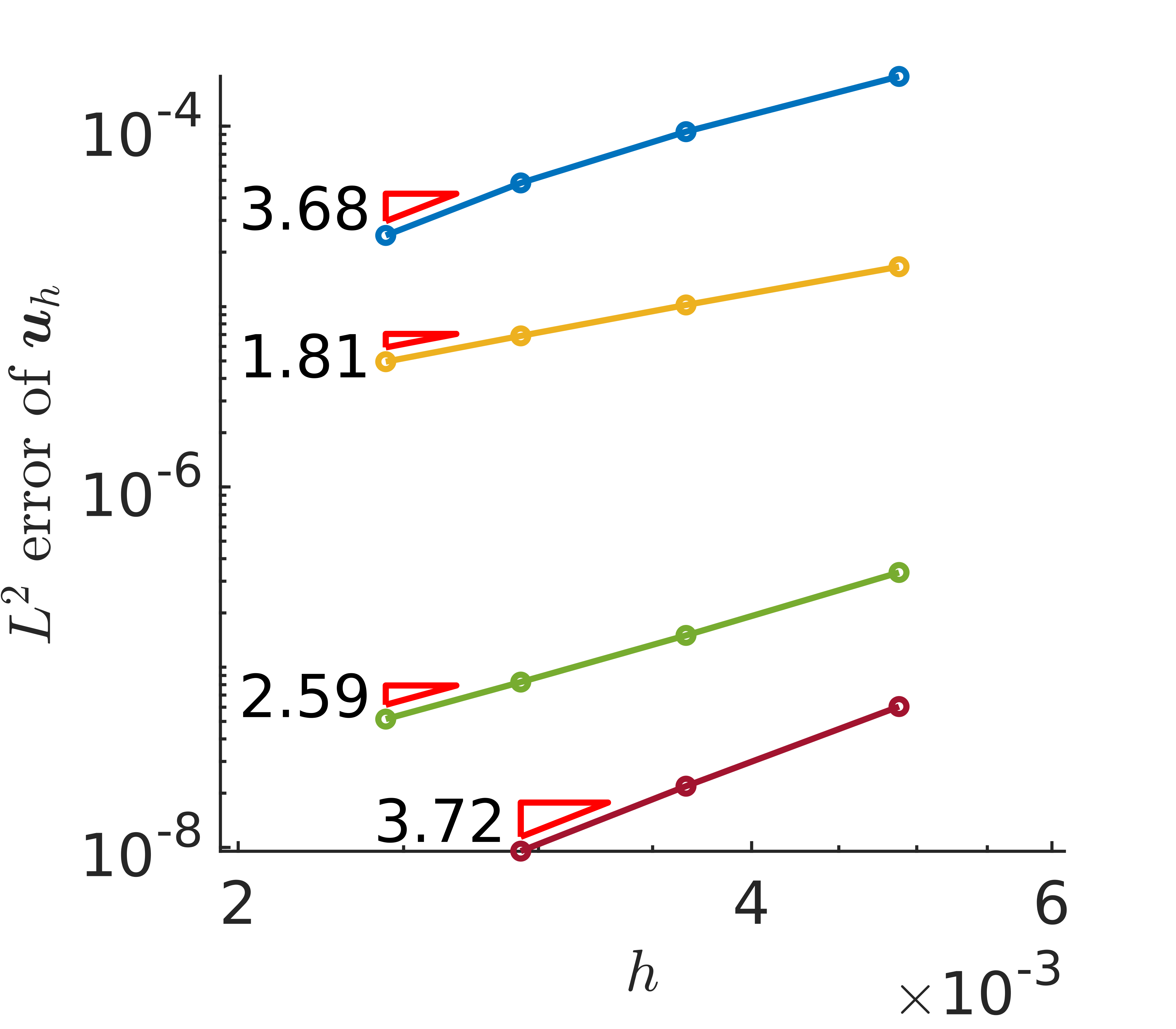}} &
     \centered{\includegraphics{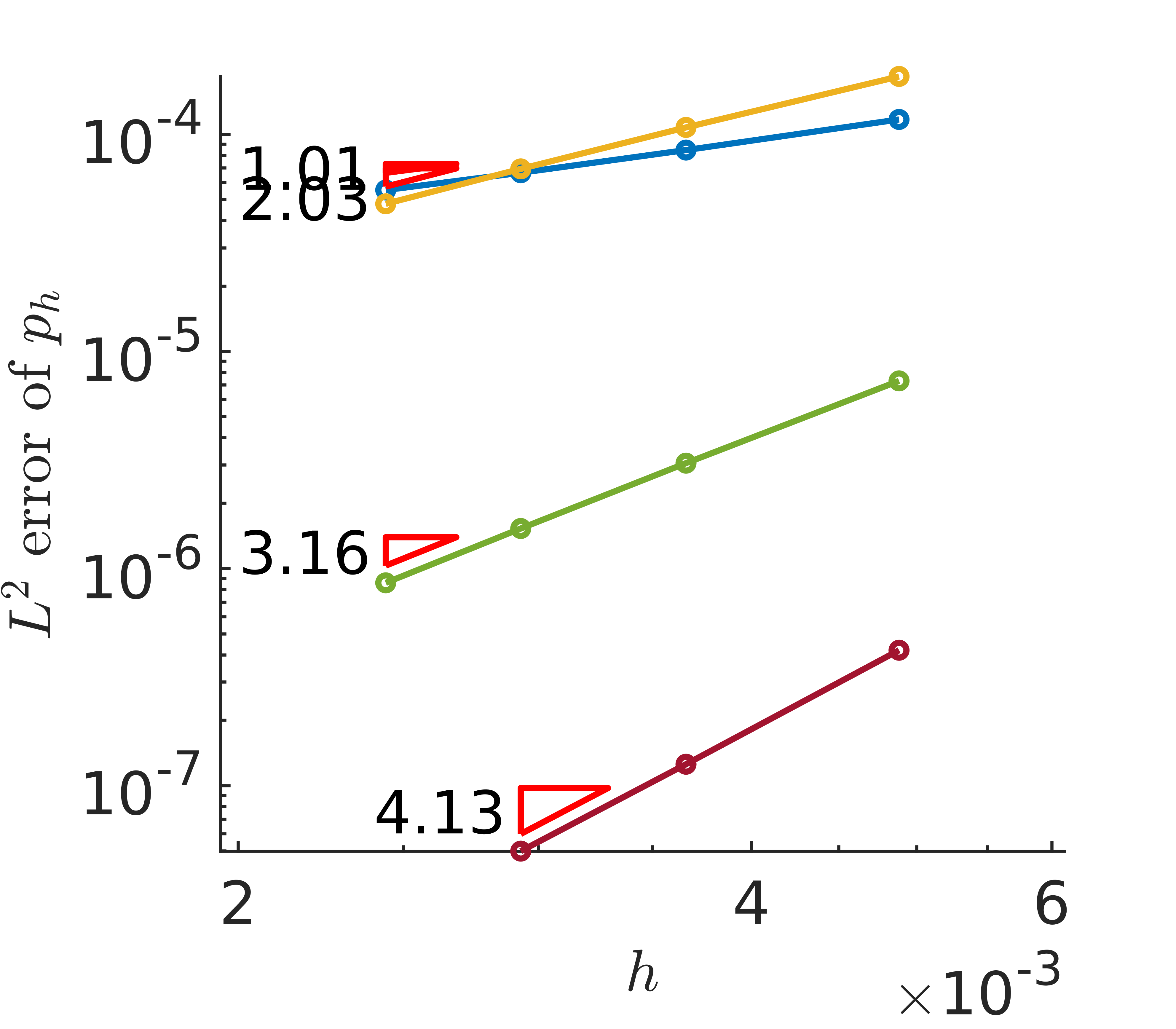}}\\
     \centered{\includegraphics{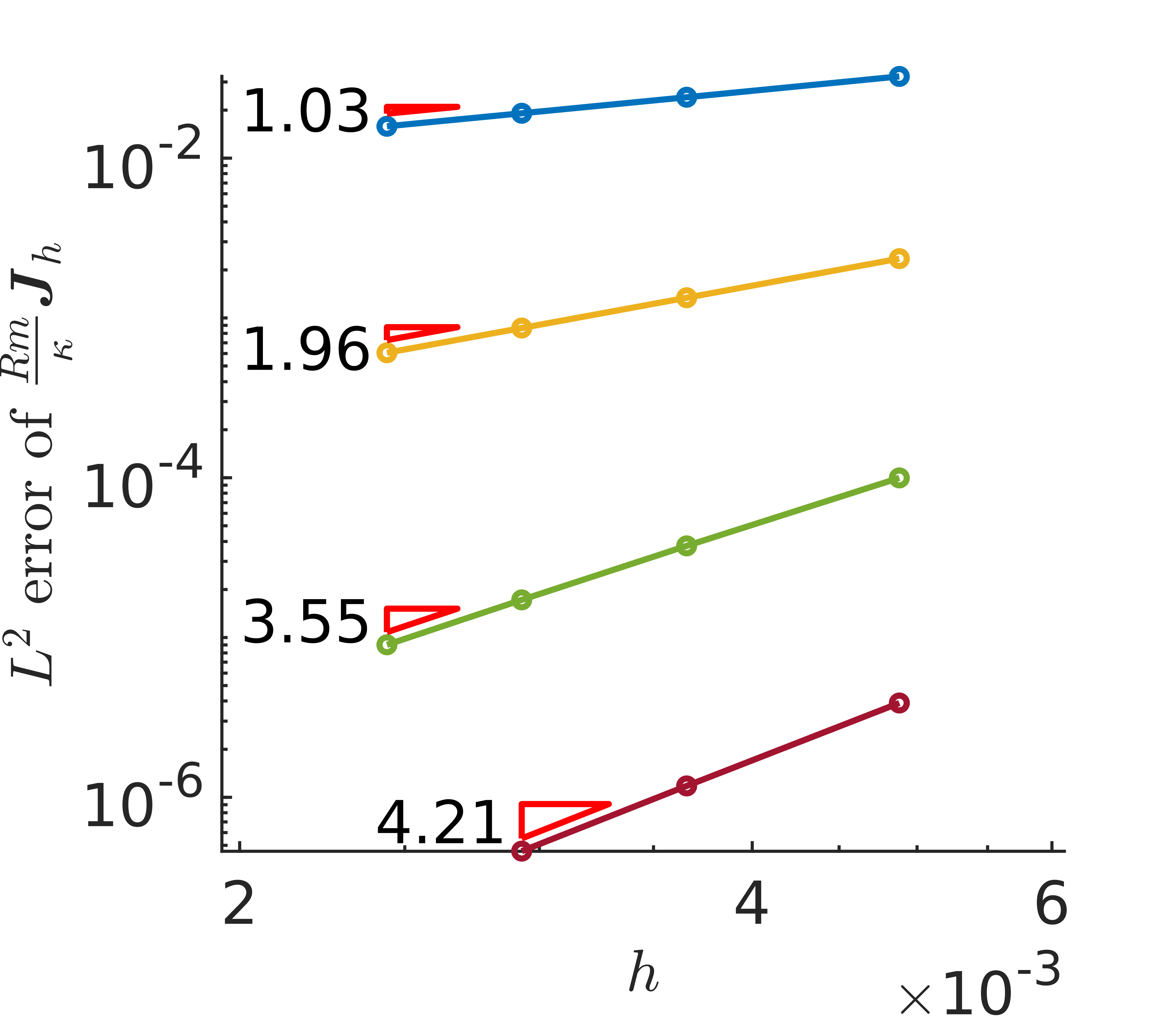}} &
     \centered{\includegraphics{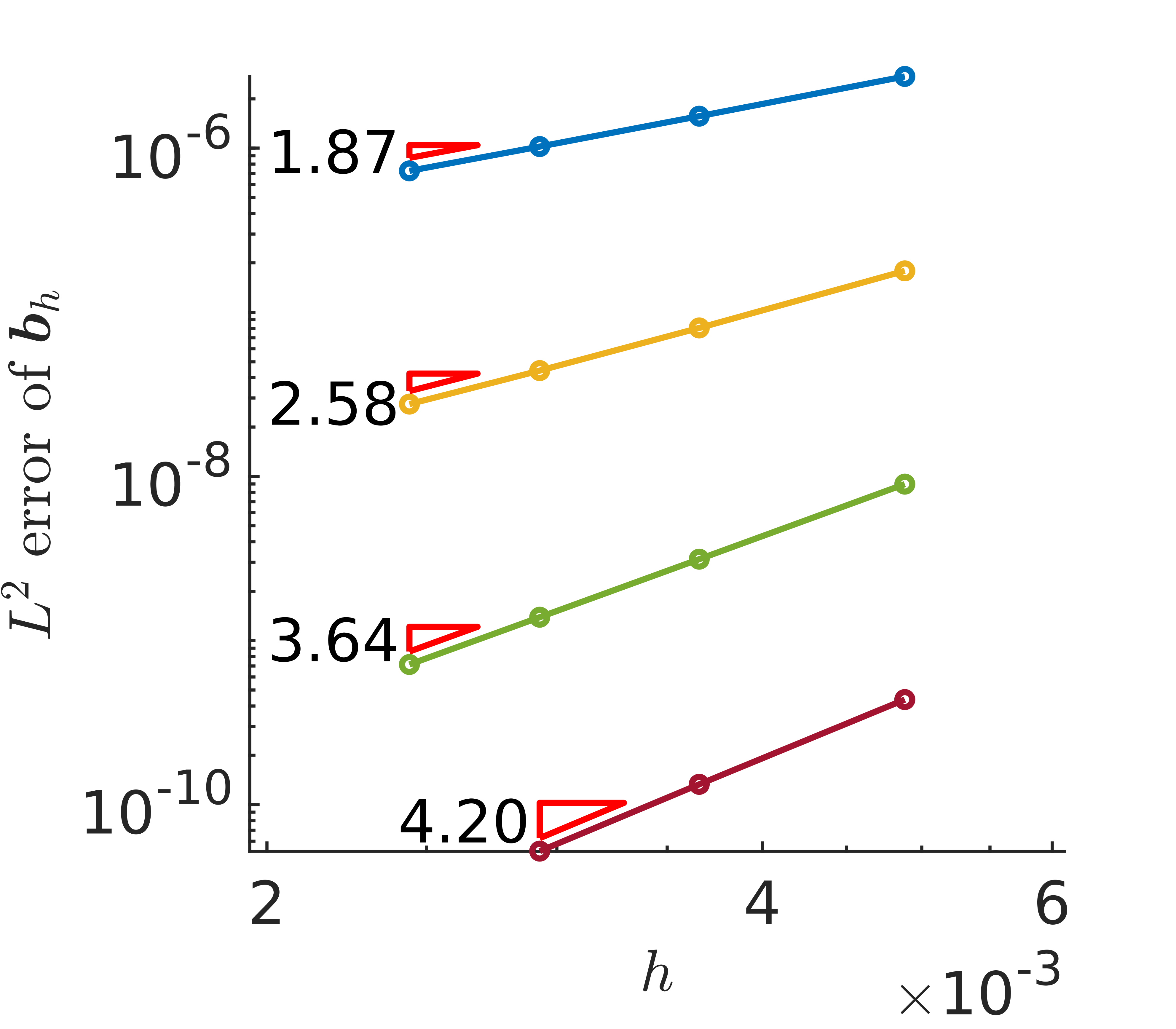}} &
     \centered{\includegraphics{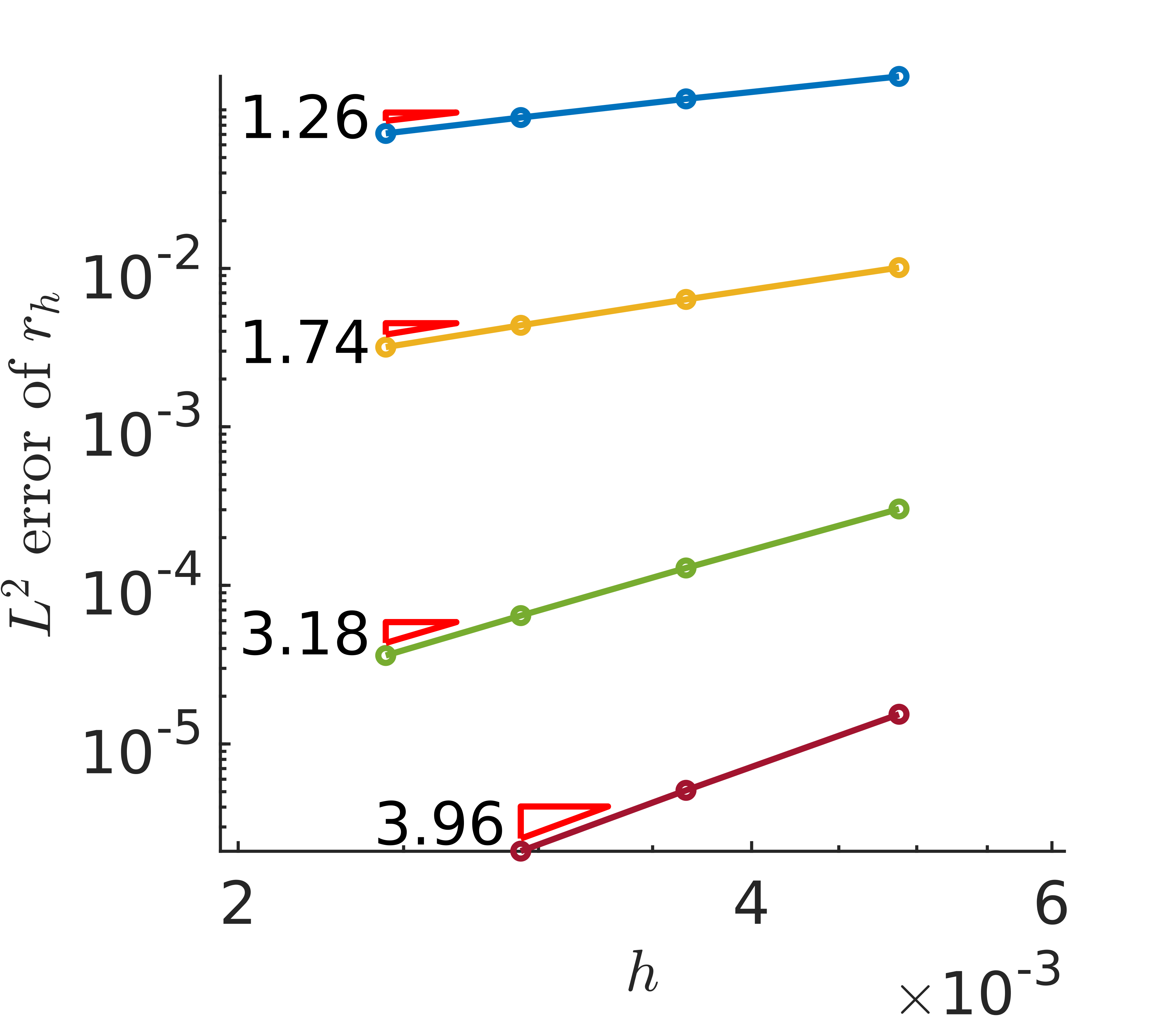}}\\
     \centered{\includegraphics{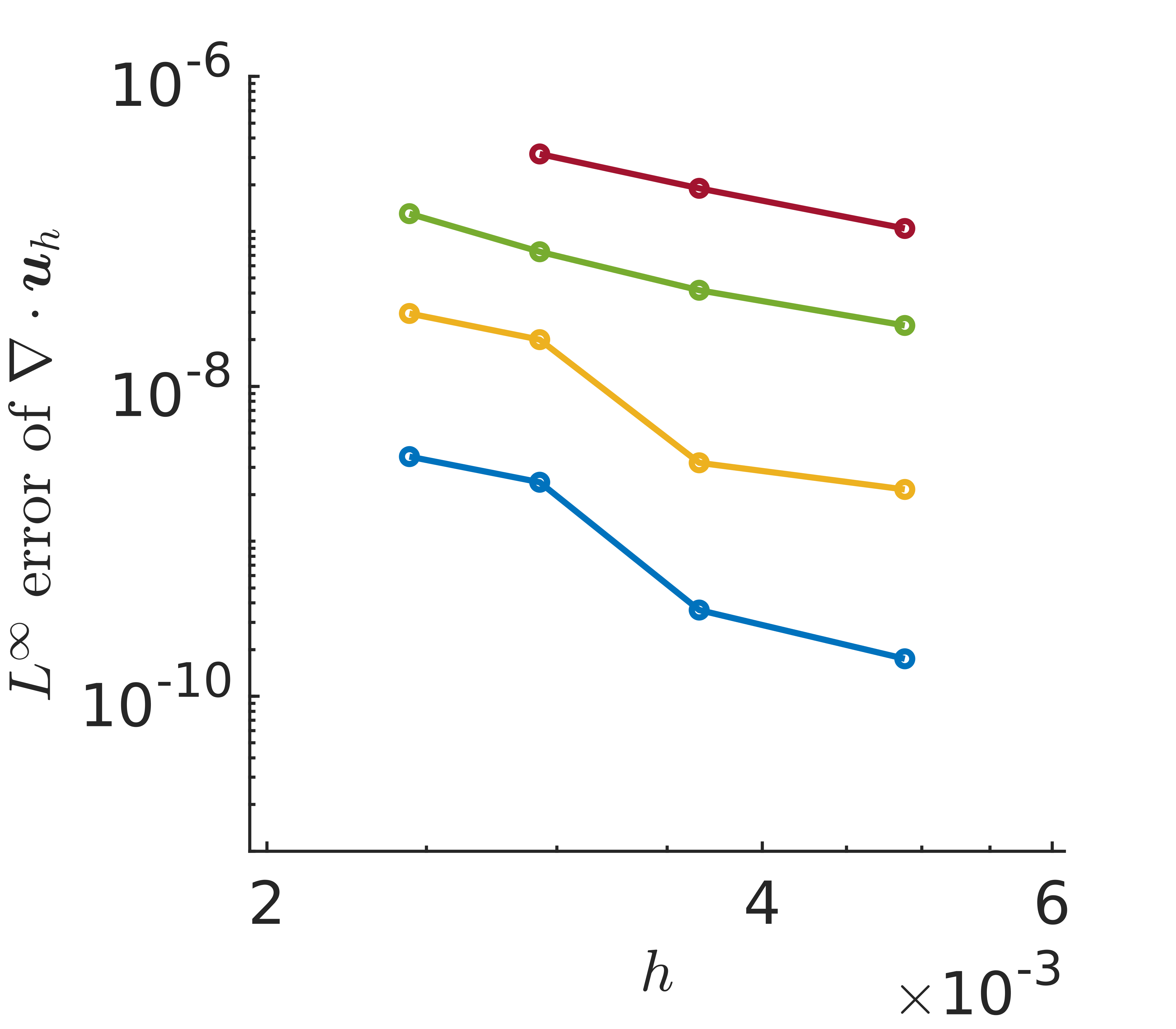}} &
     \centered{\includegraphics{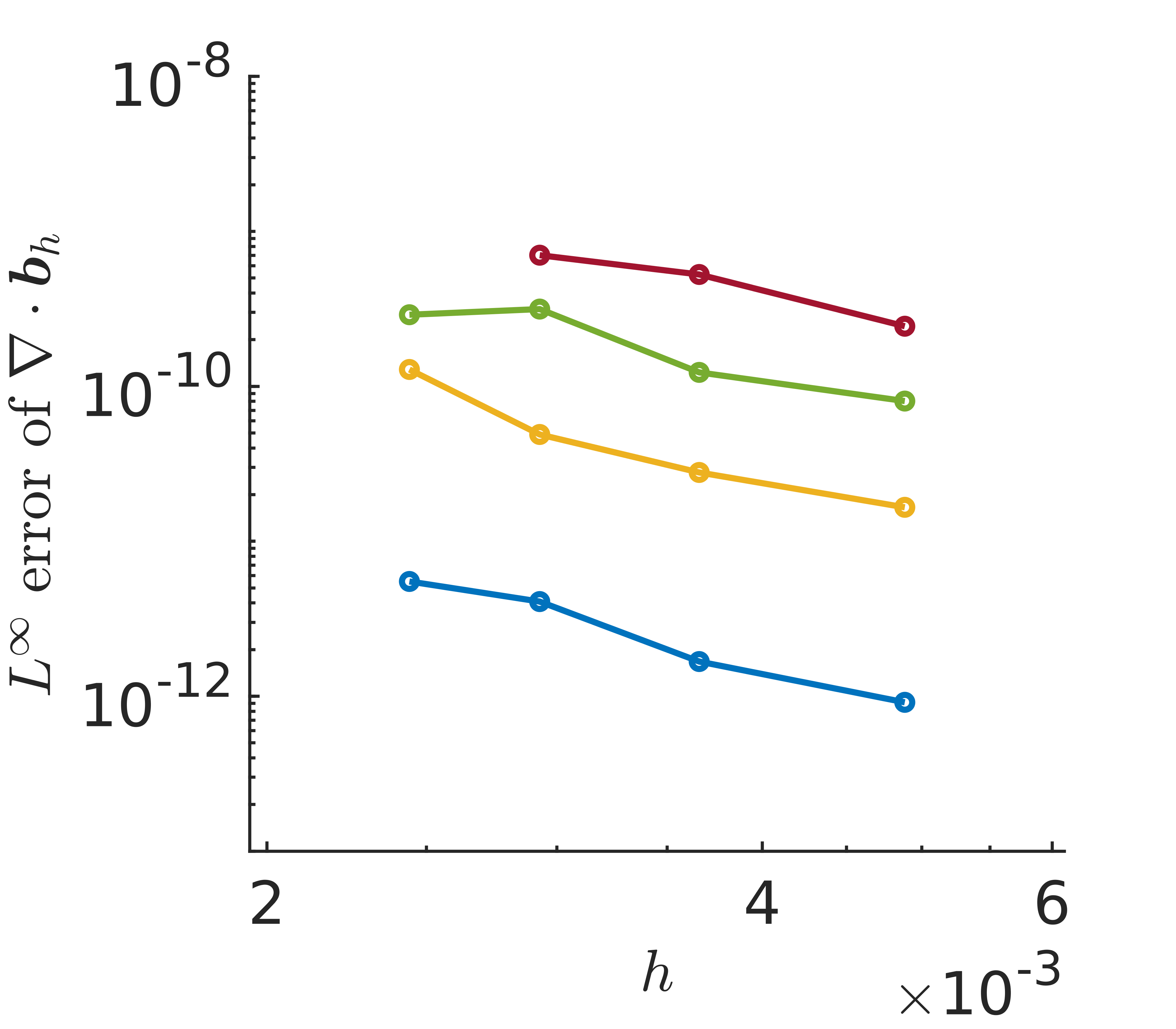}} &
     \centered{\includegraphics{conv_legend.png}}
    \end{tabular}
    }
    \caption{Convergence histories of all local variables and divergence errors for the nonlinear solver applied to solve the two-dimensional Hartmann flow problem that admits the solution given in \eqnref{hartmann_2d}.}\figlab{nonlin_hartmann_2d}
\end{figure*}

\subsubsection{Three-dimensional smooth manufactured solution}\seclab{nonlin_smooth_3d}
We now turn our attention to a three-dimensional nonlinear problem, demonstrating the convergence of the nonlinear solver utilizing a smooth manufactured solution as outlined in Section \secref{lin_smooth_3d}. The numerical results are presented in Table \tabref{nonlin_smooth_3d} and visually presented in Figure \tabref{nonlin_smooth_3d}. The observed convergence rates are consistent with the rates presented in Section \secref{nonlin_smooth_3d} where a linear problem with the same smooth manufactured solution is solved. Particularly, Table \tabref{nonlin_smooth_3d} closely mirrors the content of Table \tabref{lin_smooth_3d}. In addition, the same order of magnitude is observed for the divergence errors as well.

It is widely known that the Picard solver may not converge consistently, and the success of the iteration is contingent upon the initial guess and the contractive property. Our findings underscore that the convergence of the Picard solver is substantially influenced by the physical parameters $\Rey,\Rm$, the degree of approximation $k$, and the mesh refinement. This implies that the contractive property of the linear map $\LRp{\wb,\db}\mapsto\LRp{\ub,\bb}$ can be largely affected by these factors. This is not surprising as our analysis in \cite{muralikrishnan_multilevel_2023} showed the contraction factor is proportional to the initial guess, $\fb, \gb, \Rey, \Rm/\kappa$
and depends on $\wb$ and $\db$ in a nontrivial nonlinear manner.
Specifically, in the testing cases with $\Rey=\Rm=1000$ and $\k>1$, the Picard iteration does not converge when using the initial guess $\ubH^0=\bbH^0=\bs{0}$. Taking $\Rey=\Rm=1000$, the Picard iteration stalls when $k=2$ is used on the mesh with 364 elements, $k=3$ on the mesh with 48 elements, and $k=4$ on the mesh with 6 elements. Only the case with $k=1$ exhibits convergence across a sequence of meshes with 6, 48, 364, 3072, and 24576 elements, and the results of this case are presented in both Table \tabref{nonlin_smooth_3d} and Figure \tabref{nonlin_smooth_3d}. 

\begin{table}[!htb]
\centering
\begin{tabular}{|c|cccccc|cc|}
\multicolumn{9}{c}{$\Rey=\Rm=1,\kappa=1$} \\
\hline
 &
\begin{tabular}{@{}c@{}} $\Rey\LbH$ \end{tabular} & 
\begin{tabular}{@{}c@{}} $\ubH$\end{tabular} & 
\begin{tabular}{@{}c@{}} $\pH$ \end{tabular} & 
\begin{tabular}{@{}c@{}} $\frac{\Rm}{\kappa}\JbH$\end{tabular} & 
\begin{tabular}{@{}c@{}} $\bbH$\end{tabular} & 
\begin{tabular}{@{}c@{}} $\rH$\end{tabular} & 
$\norm{\Div{\bu_h}}_{\infty}$ & $\norm{\Div{\bb_h}}_{\infty}$\\
\hline
$k=1$ & 0.72 &	1.78 &	1.81 &	1.02 &	2.04 &	1.95 &	6.89E-13 & 2.97E-13 \\ 
$k=2$ & 2.21 &	3.50 &	2.85 &	2.21 &	3.23 &	2.78 &	1.37E-12 & 1.37E-12 \\ 
$k=3$ & 3.08 &	3.99 &	3.66 &	3.22 &	4.19 &	3.75 &	9.32E-10 & 2.07E-11 \\ 
$k=4$ & 4.22 &	5.23 &	4.73 &	4.24 &	5.23 &	4.70 &	3.47E-09 & 8.38E-11 \\ 
\hline
\multicolumn{9}{c}{$\Rey=\Rm=1000,\kappa=1$} \\
\hline
$k=1$ & 0.47 &	1.36 &	1.89 &	0.57 &	0.80 &	1.96 &	6.08E-13 & 2.47E-13 \\ 
\hline
\end{tabular}
\caption{Convergence rates of all local variables and divergence errors of velocity and magnetic fields for the Picard iterations applied to solve the three-dimensional problem with a smooth manufactured solution given in \eqnref{smooth_3d} where we set $\p_0=1$. The corresponding results are also presented in Figure \figref{nonlin_smooth_3d}. In this table, the convergence rates are evaluated at the last two data sets and the divergence errors are evaluated at the last data set.}\tablab{nonlin_smooth_3d}
\end{table}

\begin{figure*}
    \centering
    \resizebox{\textwidth}{!}{
    \begin{tabular}{@{}c@{}@{}c@{}@{}c@{}}
     \centered{\includegraphics{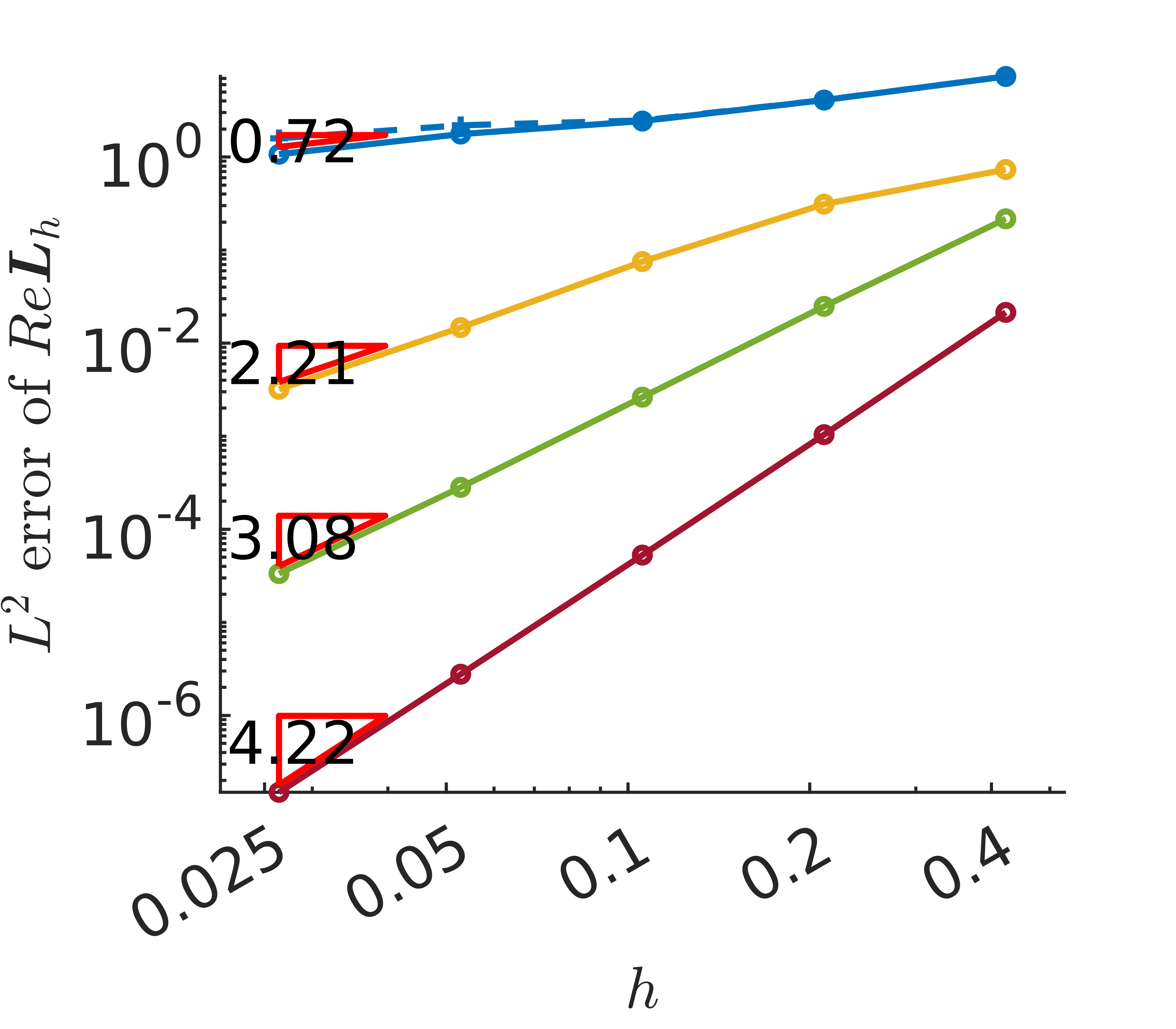} }& 
     \centered{\includegraphics{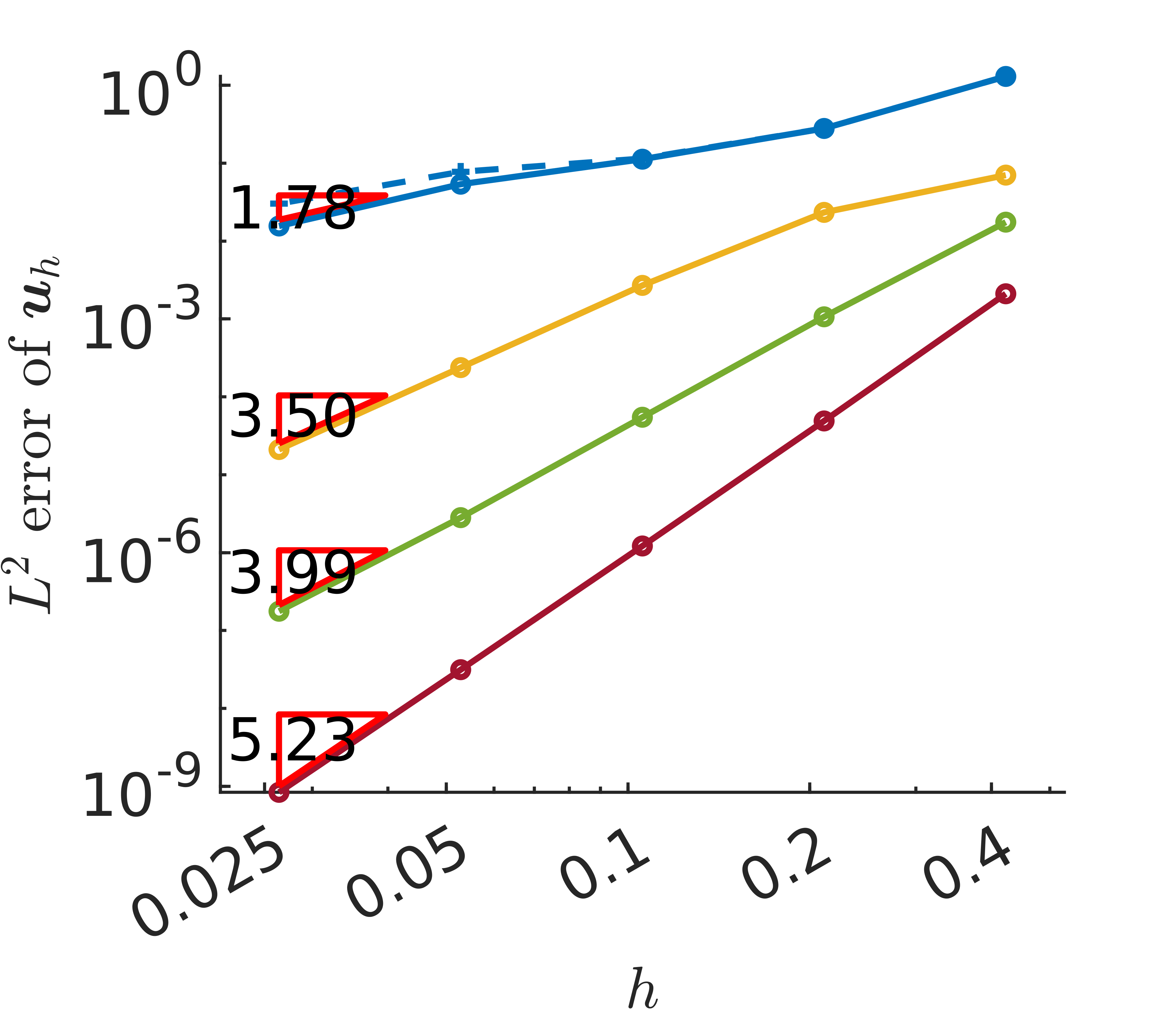}} &
     \centered{\includegraphics{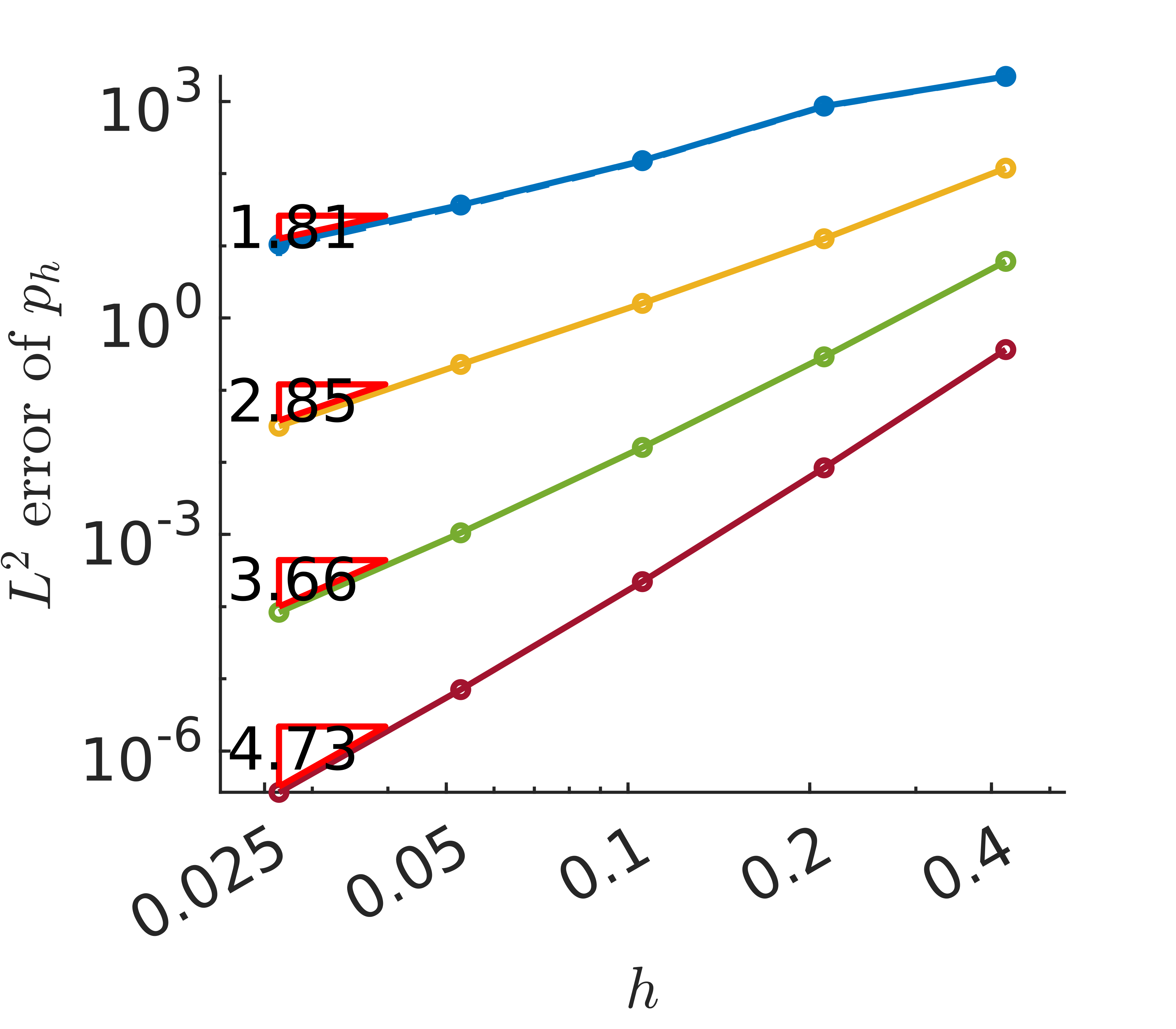}}\\
     \centered{\includegraphics{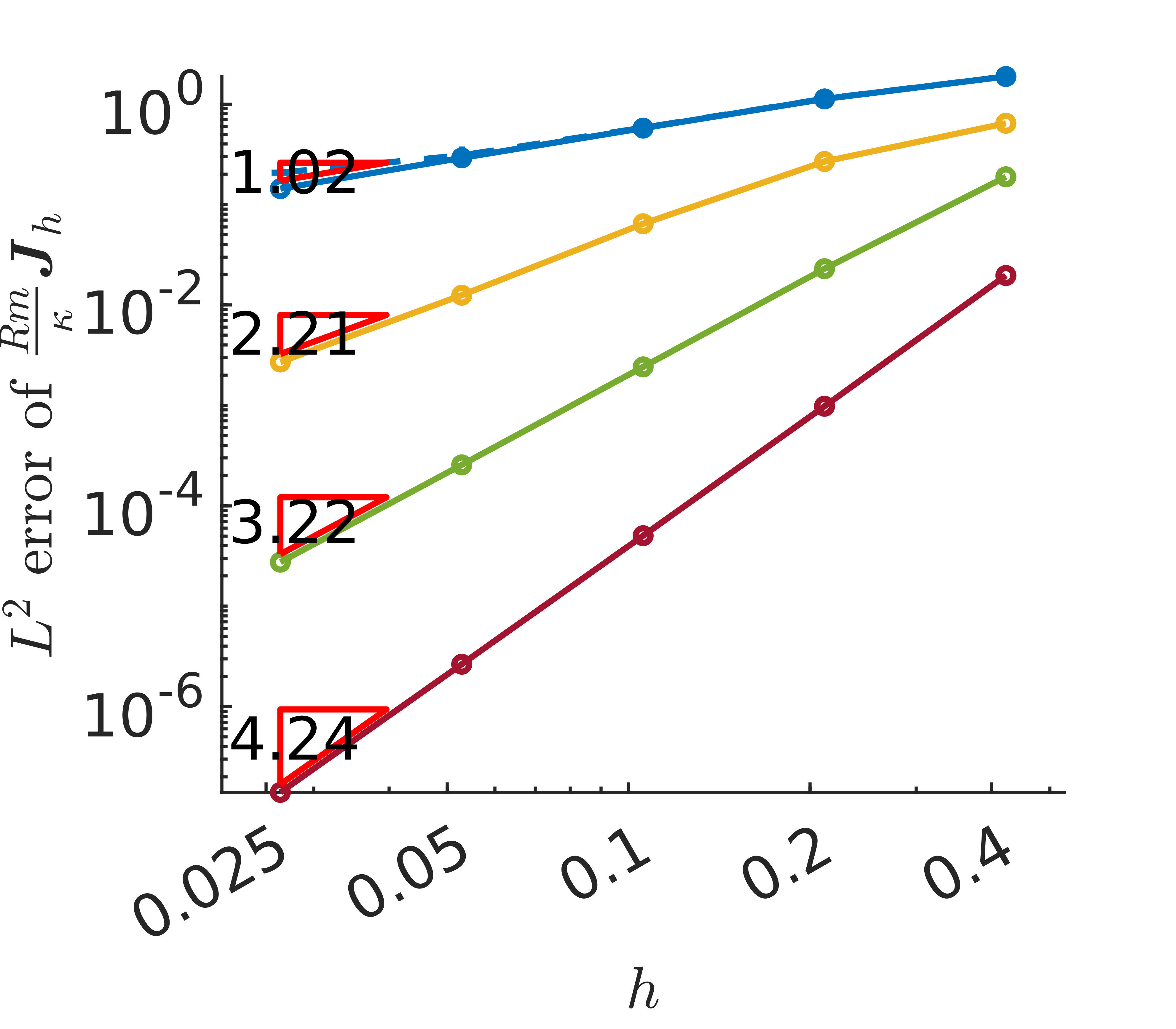}} &
     \centered{\includegraphics{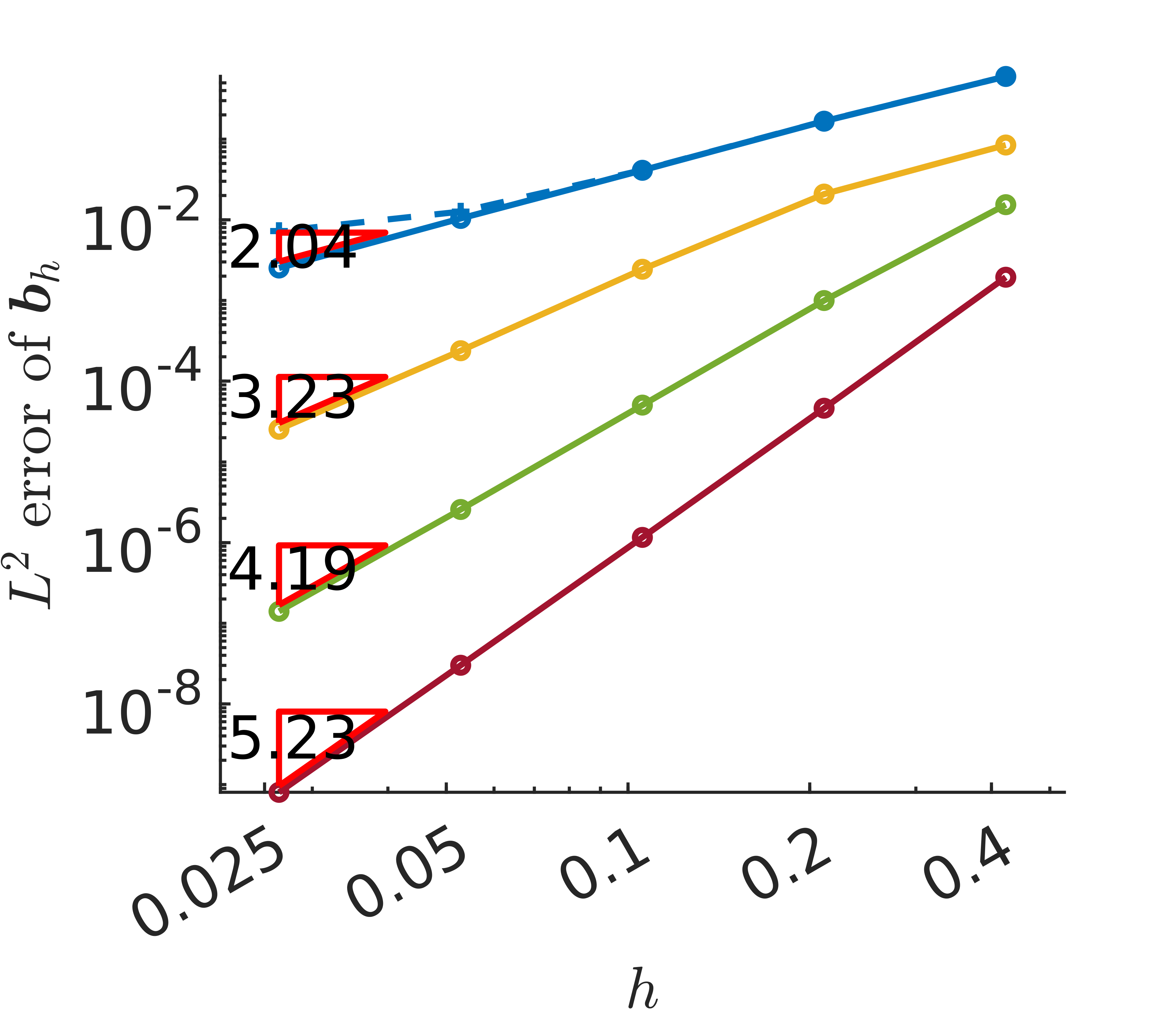}} &
     \centered{\includegraphics{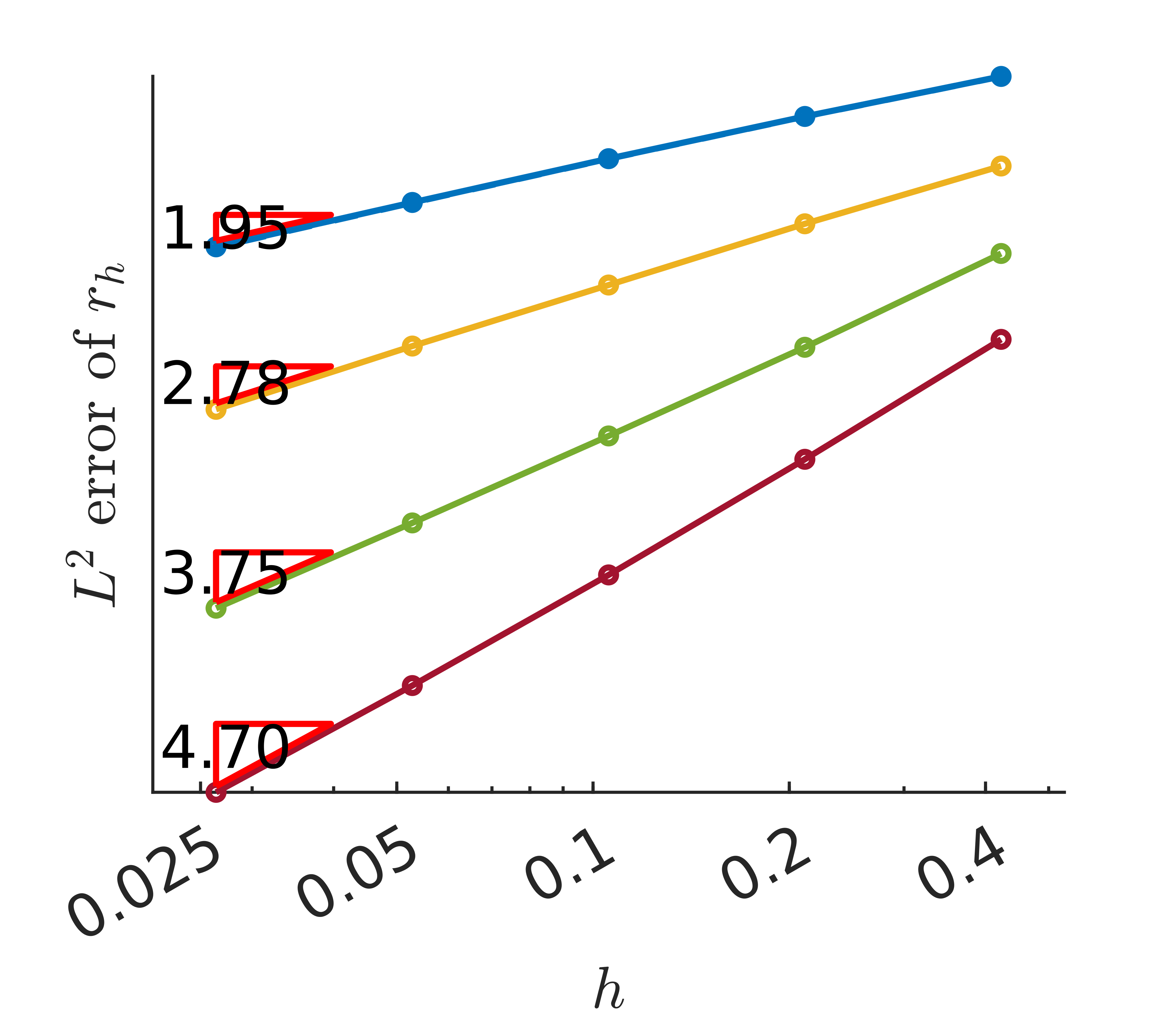}}\\
     \centered{\includegraphics{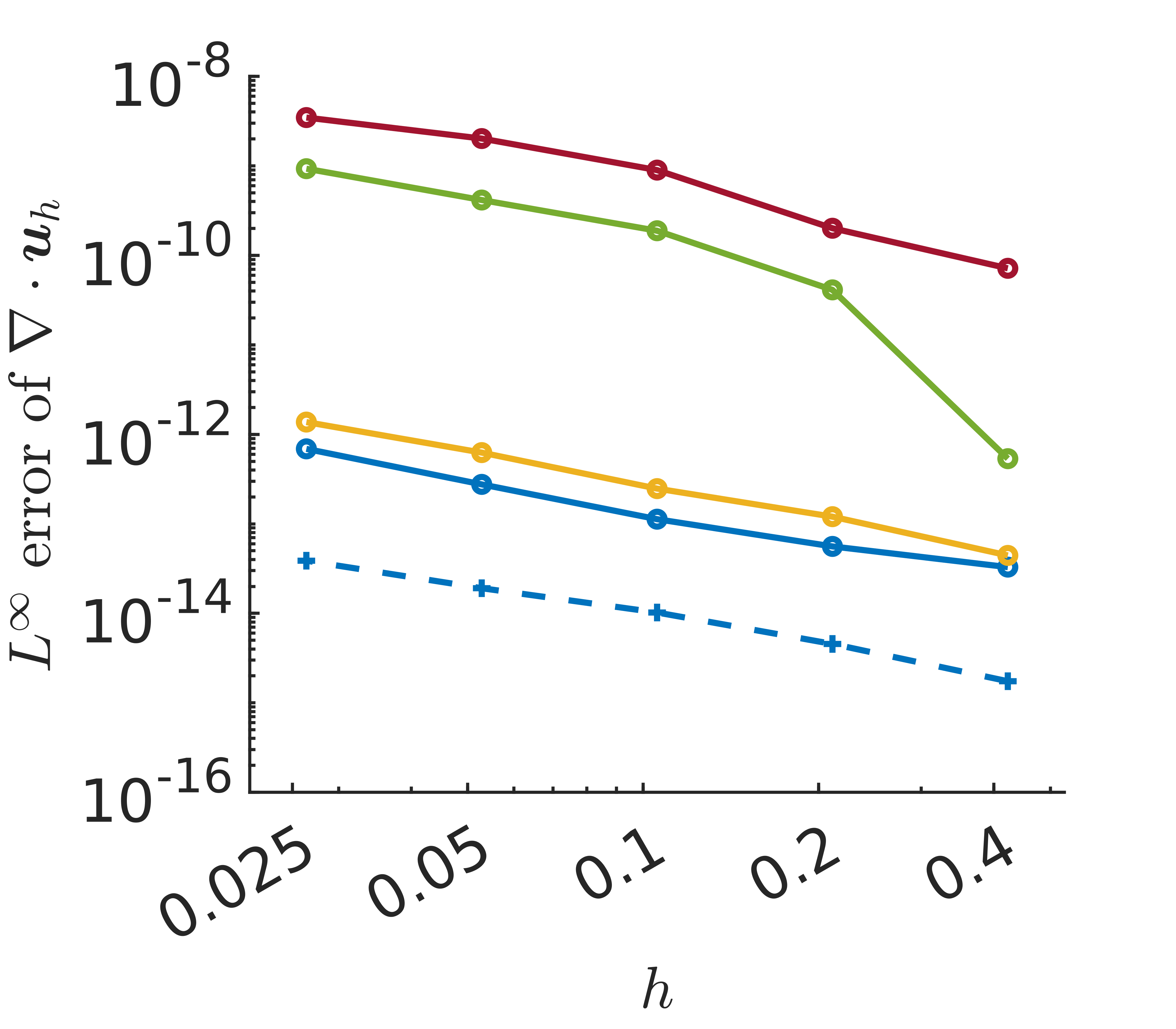}} &
     \centered{\includegraphics{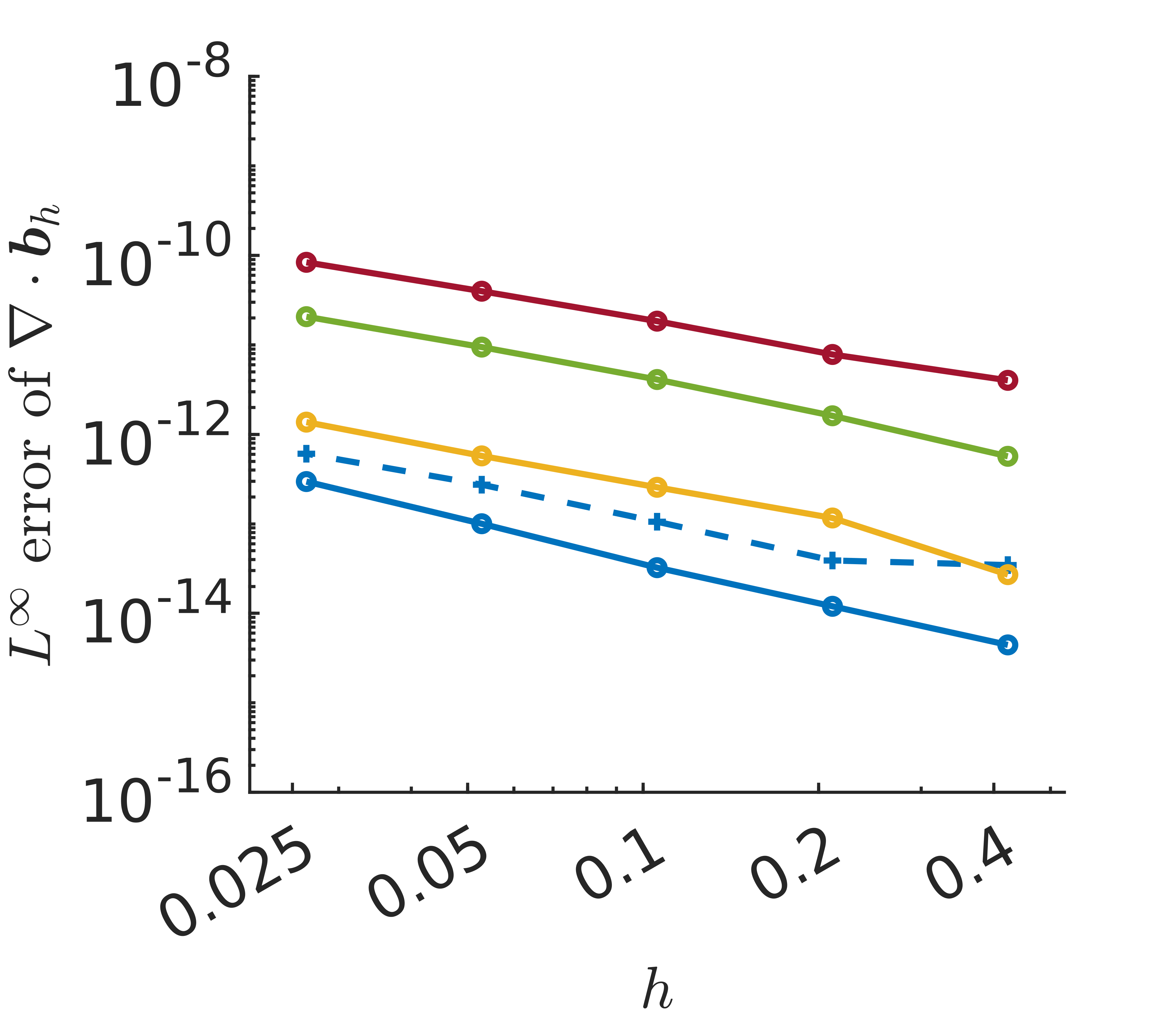}} &
     \centered{\includegraphics{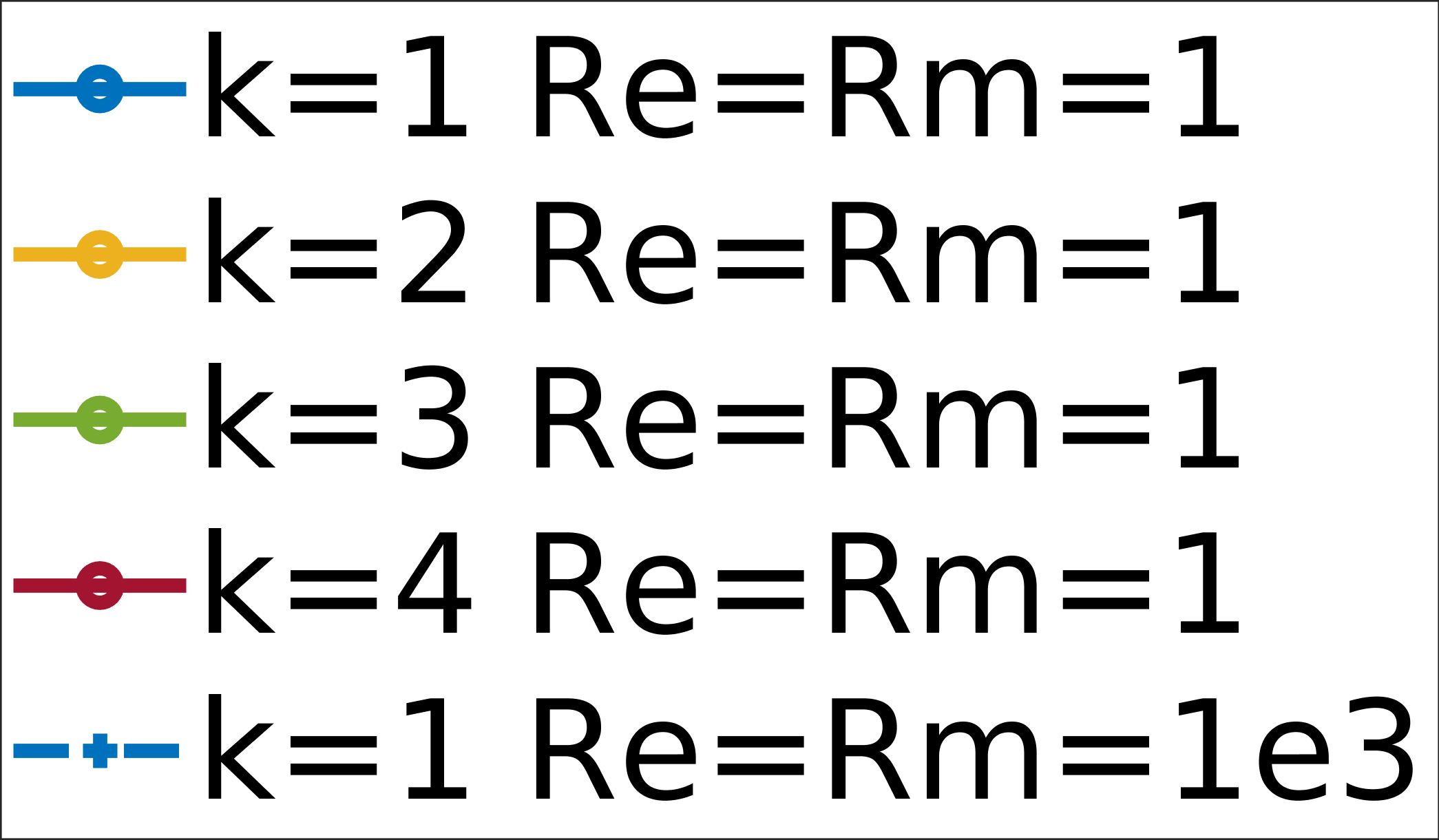}   }  
    \end{tabular}
    }
    \caption{Convergence histories of all local variables and divergence errors for the Picard iterations applied to solve the three-dimensional problem with a smooth manufactured solution given in \eqnref{smooth_3d} where we set $\p_0=1$. Only the convergence rates for $\Rey=\Rm=1$ are presented here.}\figlab{nonlin_smooth_3d}
\end{figure*}
\FloatBarrier
\section{Conclusion and future work}\seclab{conclusion}

This paper presents two new divergence-free and $\Hsp{}(div)$-conforming  HDG methods for the linearized incompressible viso-resistive MHD equations with well-posedness analysis. Particularly, we have showed that on simplicial meshes, the well-posedness of the proposed approaches can be established by the use of a one-order lower approximation in local variables for the pressure $\pH$ and the Lagrange multiplier $\rH$, and by appropriately chosen stabilization parameters. 
One of the motivations for adopting E-HDG in lieu of HDG methods lies in computational gain. Indeed, our experiments has revealed a significant acceleration in the runtime, manifested through the utilization of fewer DOFs in E-HDG, particularly in cases where the linear solver dominates the overall computational time, such as in three dimensions with high-order approximations on fine meshes. Linear problems with both smooth and singular solutions were presented to examine the convergence of the proposed E-HDG method. For problems with smooth solutions, both two- and three-dimensional settings were tested. The numerical convergence rates are shown to be optimal for both velocity and magnetic fields in the regime of low Reynolds number and magnetic Reynolds number. Moreover, the pressure robustness of our method was numerically verified. For the singular solution, the convergence rate is limited by the regularity of the solution. However, the divergence-free property is still guaranteed.

By incorporating the E-HDG discretization into the fixed point Picard iteration, we can solve the nonlinear incompressible viso-resistive MHD equations iteratively. {The globally divergence-free property still holds for both the velocity and the magnetic fields.} The convergence of the nonlinear solver is investigated through nonlinear problems with smooth solutions. The convergence rates in the tests are almost identical to the ones observed in the linear tests in both two- and three-dimensional settings. Further, divergence errors in both velocity and magnetic fields are indeed observed to be machine zero. 

While various aspects of our proposed E-HDG method have been discussed in this paper, there remain several noteworthy issues. Firstly, a rigorous convergence analysis is required, albeit consistent convergence rates for each local variable are observed in numerous numerical experiments in this paper. Secondly, the analysis presented in Section \secref{performance} may offer an incomplete depiction of the correlation between DOFs and computational time. This limitation arises from the potential inapplicability of the discussed insights to iterative solvers, which are heavily relied upon to address large-scale problems. {Therefore, the development of a scalable iterative approach that demonstrates efficacy across a wide spectrum of Reynolds and magnetic Reynolds numbers is necessary.} Finally, it is found that the Picard solver does not converge in some cases on three-dimensional meshes in the regime of high Reynolds number and magnetic Reynolds number. The observation implies that the linear map $\LRp{\wb,\db}\mapsto\LRp{\ub,\bb}$ can be largely affected by various factors. Investigating the contraction of this map could provide insights for devising a more robust algorithm. These topics are non-trivial and could each be expanded into individual papers. Thus, we aim to address them in our future research agenda.

\section*{Acknowledgments}
This research is partially funded by the National Science Foundation awards NSF-OAC-2212442, NSF-2108320, NSF-1808576 and NSF-CAREER-1845799; by the Department of Energy award DE-SC0018147 and DE-SC0022211; by the JTO Faculty Fellowship Research Program.

\appendix

\bibliographystyle{siamplain}
\bibliography{EHDG_for_MHD,references}

\begin{thebibliography}{10}

\bibitem{al-habahbeh_review_2016}
{\sc O.~M. Al-Habahbeh, M.~Al-Saqqa, M.~Safi, and T.~Abo~Khater}, {\em Review of magnetohydrodynamic pump applications}, Alexandria Engineering Journal, 55 (2016), pp.~1347--1358, \url{https://doi.org/10.1016/j.aej.2016.03.001}.

\bibitem{MUMPS2001}
{\sc P.~R. Amestoy, I.~S. Duff, J.-Y. L'Excellent, and J.~Koster}, {\em A fully asynchronous multifrontal solver using distributed dynamic scheduling}, SIAM Journal on Matrix Analysis and Applications, 23 (2001), pp.~15--41, \url{https://doi.org/10.1137/S0895479899358194}.

\bibitem{MUMPS2006}
{\sc P.~R. Amestoy, A.~Guermouche, J.-Y. L’Excellent, and S.~Pralet}, {\em Hybrid scheduling for the parallel solution of linear systems}, Parallel Computing, 32 (2006), pp.~136--156, \url{https://doi.org/https://doi.org/10.1016/j.parco.2005.07.004}.
\newblock Parallel Matrix Algorithms and Applications (PMAA’04).

\bibitem{mfem}
{\sc R.~Anderson, J.~Andrej, A.~Barker, J.~Bramwell, J.-S. Camier, J.~Cerveny, V.~Dobrev, Y.~Dudouit, A.~Fisher, T.~Kolev, W.~Pazner, M.~Stowell, V.~Tomov, I.~Akkerman, J.~Dahm, D.~Medina, and S.~Zampini}, {\em {MFEM}: A modular finite element methods library}, Computers \& Mathematics with Applications, 81 (2021), pp.~42--74, \url{https://doi.org/10.1016/j.camwa.2020.06.009}.

\bibitem{armero_long-term_1996}
{\sc F.~Armero and J.~C. Simo}, {\em Long-term dissipativity of time-stepping algorithms for an abstract evolution equation with applications to the incompressible {MHD} and {Navier}-{Stokes} equations}, Computer Methods in Applied Mechanics and Engineering, 131 (1996), pp.~41--90, \url{https://doi.org/10.1016/0045-7825(95)00931-0}.

\bibitem{arnold_interior_1982}
{\sc D.~N. Arnold}, {\em An interior penalty finite element method with discontinuous elements}, {SIAM} Journal on Numerical Analysis, 19 (1982), pp.~742--760, \url{https://doi.org/10.1137/0719052}.
\newblock Publisher: Society for Industrial and Applied Mathematics.

\bibitem{baker_finite_1977}
{\sc G.~A. Baker}, {\em Finite element methods for elliptic equations using nonconforming elements}, Mathematics of Computation, 31 (1977), pp.~45--59, \url{https://doi.org/10.1090/S0025-5718-1977-0431742-5}.

\bibitem{baker_piecewise_1990}
{\sc G.~A. Baker, W.~N. Jureidini, and O.~A. Karakashian}, {\em Piecewise solenoidal vector fields and the {Stokes} problem}, {SIAM} Journal on Numerical Analysis, 27 (1990), pp.~1466--1485.
\newblock Publisher: Society for Industrial and Applied Mathematics.

\bibitem{balay_petsctao_2023}
{\sc S.~Balay, S.~Abhyankar, M.~Adams, S.~Benson, J.~Brown, P.~Brune, K.~Buschelman, E.~Constantinescu, L.~Dalcin, A.~Dener, V.~Eijkhout, J.~Faibussowitsch, W.~Gropp, V.~Hapla, T.~Isaac, P.~Jolivet, D.~Karpeev, D.~Kaushik, M.~Knepley, F.~Kong, S.~Kruger, D.~May, L.~{McInnes}, R.~Mills, L.~Mitchell, T.~Munson, J.~Roman, K.~Rupp, P.~Sanan, J.~Sarich, B.~Smith, S.~Zampini, H.~Zhang, H.~Zhang, and J.~Zhang}, {\em {PETSc}/{TAO} users manual (rev. 3.19)}, 2023, \url{https://doi.org/10.2172/1968587}.

\bibitem{balay_efficient_1997}
{\sc S.~Balay, W.~D. Gropp, L.~C. {McInnes}, and B.~F. Smith}, {\em Efficient management of parallelism in object-oriented numerical software libraries}, in Modern Software Tools for Scientific Computing, E.~Arge, A.~M. Bruaset, and H.~P. Langtangen, eds., Birkhäuser, 1997, pp.~163--202, \url{https://doi.org/10.1007/978-1-4612-1986-6_8}.

\bibitem{balsara_staggered_1999}
{\sc D.~S. Balsara and D.~S. Spicer}, {\em A staggered mesh algorithm using high order godunov fluxes to ensure solenoidal magnetic fields in magnetohydrodynamic simulations}, Journal of Computational Physics, 149 (1999), pp.~270--292, \url{https://doi.org/10.1006/jcph.1998.6153}.

\bibitem{bassi_artificial_2006}
{\sc F.~Bassi, A.~Crivellini, D.~A. Di~Pietro, and S.~Rebay}, {\em An artificial compressibility flux for the discontinuous {Galerkin} solution of the incompressible {Navier}–{Stokes} equations}, Journal of Computational Physics, 218 (2006), pp.~794--815, \url{https://doi.org/10.1016/j.jcp.2006.03.006}.

\bibitem{bassi_implicit_2007}
{\sc F.~Bassi, A.~Crivellini, D.~A. Di~Pietro, and S.~Rebay}, {\em An implicit high-order discontinuous {Galerkin} method for steady and unsteady incompressible flows}, Computers \& Fluids, 36 (2007), pp.~1529--1546, \url{https://doi.org/10.1016/j.compfluid.2007.03.012}.

\bibitem{bastian_superconvergence_2003}
{\sc P.~Bastian and B.~Rivière}, {\em Superconvergence and h(div) projection for discontinuous {Galerkin} methods}, International Journal for Numerical Methods in Fluids, 42 (2003), pp.~1043--1057, \url{https://doi.org/10.1002/fld.562}.
\newblock \_eprint: https://onlinelibrary.wiley.com/doi/pdf/10.1002/fld.562.

\bibitem{benzi_golub_liesen_2005}
{\sc M.~Benzi, G.~H. Golub, and J.~Liesen}, {\em Numerical solution of saddle point problems}, Acta Numerica, 14 (2005), p.~1–137, \url{https://doi.org/10.1017/S0962492904000212}.

\bibitem{boffi_mixed_2013}
{\sc D.~Boffi, F.~Brezzi, and M.~Fortin}, {\em Mixed Finite Element Methods and Applications}, vol.~44 of Springer Series in Computational Mathematics, Springer, 2013, \url{https://doi.org/10.1007/978-3-642-36519-5}.

\bibitem{bohm_entropy_2020}
{\sc M.~Bohm, A.~R. Winters, G.~J. Gassner, D.~Derigs, F.~Hindenlang, and J.~Saur}, {\em An entropy stable nodal discontinuous {Galerkin} method for the resistive {MHD} equations. part i: Theory and numerical verification}, Journal of Computational Physics, 422 (2020), p.~108076, \url{https://doi.org/10.1016/j.jcp.2018.06.027}.

\bibitem{botti_pressure-correction_2011}
{\sc L.~Botti and D.~A. Di~Pietro}, {\em A pressure-correction scheme for convection-dominated incompressible flows with discontinuous velocity and continuous pressure}, Journal of Computational Physics, 230 (2011), pp.~572--585, \url{https://doi.org/10.1016/j.jcp.2010.10.004}.

\bibitem{brackbill_effect_1980}
{\sc J.~U. Brackbill and D.~C. Barnes}, {\em The effect of nonzero $\nabla$ · b on the numerical solution of the magnetohydrodynamic equations}, Journal of Computational Physics, 35 (1980), pp.~426--430, \url{https://doi.org/10.1016/0021-9991(80)90079-0}.

\bibitem{busse_magnetohydrodynamics_1978}
{\sc F.~H. Busse}, {\em Magnetohydrodynamics of the earth's dynamo}, Annual Review of Fluid Mechanics, 10 (1978), pp.~435--462, \url{https://doi.org/10.1146/annurev.fl.10.010178.002251}.
\newblock \_eprint: https://doi.org/10.1146/annurev.fl.10.010178.002251.

\bibitem{carrero_hybridized_2006}
{\sc J.~Carrero, B.~Cockburn, and D.~Schötzau}, {\em Hybridized globally divergence-free {LDG} methods. part i: The {Stokes} problem}, Mathematics of Computation, 75 (2006), pp.~533--563.
\newblock Publisher: American Mathematical Society.

\bibitem{cesmelioglu_analysis_2013}
{\sc A.~Cesmelioglu, B.~Cockburn, N.~C. Nguyen, and J.~Peraire}, {\em Analysis of {HDG} methods for oseen equations}, Journal of Scientific Computing, 55 (2013), pp.~392--431, \url{https://doi.org/10.1007/s10915-012-9639-y}.

\bibitem{ciuca_implicit_2020}
{\sc C.~Ciucă, P.~Fernandez, A.~Christophe, N.~C. Nguyen, and J.~Peraire}, {\em Implicit hybridized discontinuous {Galerkin} methods for compressible magnetohydrodynamics}, Elsevier,  (2020).
\newblock Accepted: 2021-10-27T20:36:09Z Publisher: Elsevier {BV}.

\bibitem{cockburn_discontinuous_2017}
{\sc B.~Cockburn}, {\em Discontinuous {Galerkin} methods for computational fluid dynamics}, in Encyclopedia of Computational Mechanics Second Edition, John Wiley \& Sons, Ltd, 2017, pp.~1--63, \url{https://doi.org/10.1002/9781119176817.ecm2053}.
\newblock \_eprint: https://onlinelibrary.wiley.com/doi/pdf/10.1002/9781119176817.ecm2053.

\bibitem{cockburn_unified_2009}
{\sc B.~Cockburn, J.~Gopalakrishnan, and R.~Lazarov}, {\em Unified hybridization of discontinuous {Galerkin}, mixed, and continuous {Galerkin} methods for second order elliptic problems}, {SIAM} Journal on Numerical Analysis, 47 (2009), pp.~1319--1365, \url{https://doi.org/10.1137/070706616}.
\newblock Publisher: Society for Industrial and Applied Mathematics.

\bibitem{cockburn_locally_2005}
{\sc B.~Cockburn, G.~Kanschat, and D.~Schötzau}, {\em A locally conservative {LDG} method for the incompressible {Navier}-{Stokes} equations}, Mathematics of Computation, 74 (2005), pp.~1067--1095.
\newblock Publisher: American Mathematical Society.

\bibitem{cockburn_note_2007}
{\sc B.~Cockburn, G.~Kanschat, and D.~Schötzau}, {\em A note on discontinuous {Galerkin} divergence-free solutions of the {Navier}–{Stokes} equations}, Journal of Scientific Computing, 31 (2007), pp.~61--73, \url{https://doi.org/10.1007/s10915-006-9107-7}.

\bibitem{cockburn_equal-order_2009}
{\sc B.~Cockburn, G.~Kanschat, and D.~Schötzau}, {\em An equal-order {DG} method for the incompressible {Navier}-{Stokes} equations}, Journal of Scientific Computing, 40 (2009), pp.~188--210, \url{https://doi.org/10.1007/s10915-008-9261-1}.

\bibitem{cockburn_divergence-conforming_2014}
{\sc B.~Cockburn and F.-J. Sayas}, {\em Divergence-conforming {HDG} methods for {Stokes} flows}, Mathematics of Computation, 83 (2014), pp.~1571--1598, \url{https://doi.org/10.1090/S0025-5718-2014-02802-0}.

\bibitem{davidson_magnetohydrodynamics_1999}
{\sc P.~A. Davidson}, {\em Magnetohydrodynamics in materials processing}, Annual Review of Fluid Mechanics, 31 (1999), pp.~273--300, \url{https://doi.org/10.1146/annurev.fluid.31.1.273}.
\newblock \_eprint: https://doi.org/10.1146/annurev.fluid.31.1.273.

\bibitem{davidson_introduction_2001}
{\sc P.~A. Davidson}, {\em An Introduction to Magnetohydrodynamics}, Cambridge Texts in Applied Mathematics, Cambridge University Press, 2001, \url{https://doi.org/10.1017/CBO9780511626333}.

\bibitem{dedner_hyperbolic_2002}
{\sc A.~Dedner, F.~Kemm, D.~Kröner, C.~D. Munz, T.~Schnitzer, and M.~Wesenberg}, {\em Hyperbolic divergence cleaning for the {MHD} equations}, Journal of Computational Physics, 175 (2002), pp.~645--673, \url{https://doi.org/10.1006/jcph.2001.6961}.

\bibitem{derigs_novel_2016}
{\sc D.~Derigs, A.~R. Winters, G.~J. Gassner, and S.~Walch}, {\em A novel high-order, entropy stable, 3d {AMR} {MHD} solver with guaranteed positive pressure}, Journal of Computational Physics, 317 (2016), pp.~223--256, \url{https://doi.org/10.1016/j.jcp.2016.04.048}.

\bibitem{douglas_interior_1976}
{\sc J.~Douglas and T.~Dupont}, {\em Interior penalty procedures for elliptic and parabolic {Galerkin} methods}, in Computing Methods in Applied Sciences, R.~Glowinski and J.~L. Lions, eds., Lecture Notes in Physics, Springer, 1976, pp.~207--216, \url{https://doi.org/10.1007/BFb0120591}.

\bibitem{Howard2014}
{\sc H.~Elman, D.~Silvester, and A.~Wathen}, {\em {Finite Elements and Fast Iterative Solvers: with Applications in Incompressible Fluid Dynamics}}, Oxford University Press, 06 2014, \url{https://doi.org/10.1093/acprof:oso/9780199678792.001.0001}.

\bibitem{ern_finite_2021}
{\sc A.~Ern and J.-L. Guermond}, {\em Finite Elements I: Approximation and Interpolation}, vol.~72 of Texts in Applied Mathematics, Springer International Publishing, 2021, \url{https://doi.org/10.1007/978-3-030-56341-7}.

\bibitem{evans_simulation_1988}
{\sc C.~R. Evans and J.~F. Hawley}, {\em Simulation of magnetohydrodynamic flows: A constrained transport model}, The Astrophysical Journal, 332 (1988), p.~659, \url{https://doi.org/10.1086/166684}.
\newblock {ADS} Bibcode: 1988ApJ...332..659E.

\bibitem{evans_partial_2022}
{\sc L.~C. Evans}, {\em Partial differential equations}, vol.~19, American Mathematical Society, 2022.

\bibitem{forsberg_advanced_2005}
{\sc C.~Forsberg}, {\em The advanced high-temperature reactor: High-temperature fuel, liquid salt coolant, liquid-metal-reactor plant}, Progress in Nuclear Energy, 47 (2005), pp.~32--43, \url{https://doi.org/10.1016/j.pnucene.2005.05.002}.

\bibitem{fu_explicit_2019}
{\sc G.~Fu}, {\em An explicit divergence-free {DG} method for incompressible magnetohydrodynamics}, Journal of Scientific Computing, 79 (2019), pp.~1737--1752, \url{https://doi.org/10.1007/s10915-019-00909-2}.

\bibitem{gerbeau_stabilized_2000}
{\sc J.-F. Gerbeau}, {\em A stabilized finite element method for the incompressible magnetohydrodynamic equations}, Numerische Mathematik, 87 (2000), pp.~83--111, \url{https://doi.org/10.1007/s002110000193}.

\bibitem{gerbeau_mathematical_2006}
{\sc J.-F. Gerbeau, C.~Le~Bris, and T.~Lelièvre}, {\em Mathematical methods for the magnetohydrodynamics of liquid metals}, Clarendon Press, 2006.

\bibitem{girault_finite_1986}
{\sc V.~Girault and P.-A. Raviart}, {\em Finite Element Methods for {Navier}-{Stokes} Equations}, vol.~5 of Springer Series in Computational Mathematics, Springer, 1986, \url{https://doi.org/10.1007/978-3-642-61623-5}.

\bibitem{gleason_divergence-conforming_2022}
{\sc T.~A. Gleason, E.~L. Peters, and J.~A. Evans}, {\em A divergence-conforming hybridized discontinuous {Galerkin} method for the incompressible magnetohydrodynamics equations}, 2022, \url{https://doi.org/10.48550/arXiv.2201.01906}, \url{https://arxiv.org/abs/2201.01906 [cs, math]}.

\bibitem{goedbloed_principles_2004}
{\sc J.~P.~H. Goedbloed and S.~Poedts}, {\em Principles of Magnetohydrodynamics: With Applications to Laboratory and Astrophysical Plasmas}, Cambridge University Press, 2004, \url{https://doi.org/10.1017/CBO9780511616945}.

\bibitem{goossens_introduction_2003}
{\sc M.~Goossens}, {\em An Introduction to Plasma Astrophysics and Magnetohydrodynamics}, vol.~294 of Astrophysics and Space Science Library, Springer Netherlands, 2003, \url{https://doi.org/10.1007/978-94-007-1076-4}.

\bibitem{greif_mixed_2010}
{\sc C.~Greif, D.~Li, D.~Schötzau, and X.~Wei}, {\em A mixed finite element method with exactly divergence-free velocities for incompressible magnetohydrodynamics}, Computer Methods in Applied Mechanics and Engineering, 199 (2010), pp.~2840--2855, \url{https://doi.org/10.1016/j.cma.2010.05.007}.

\bibitem{gunzburger_existence_1991}
{\sc M.~D. Gunzburger, A.~J. Meir, and J.~S. Peterson}, {\em On the existence, uniqueness, and finite element approximation of solutions of the equations of stationary, incompressible magnetohydrodynamics}, Mathematics of Computation, 56 (1991), pp.~523--563, \url{https://doi.org/10.1090/S0025-5718-1991-1066834-0}.

\bibitem{guzman_hdiv_2017}
{\sc J.~Guzmán, C.-W. Shu, and F.~A. Sequeira}, {\em H(div) conforming and {DG} methods for incompressible {Euler}’s equations}, {IMA} Journal of Numerical Analysis, 37 (2017), pp.~1733--1771, \url{https://doi.org/10.1093/imanum/drw054}.

\bibitem{guzey_embedded_2007}
{\sc S.~Güzey, B.~Cockburn, and H.~K. Stolarski}, {\em The embedded discontinuous {Galerkin} method: application to linear shell problems}, International Journal for Numerical Methods in Engineering, 70 (2007), pp.~757--790, \url{https://doi.org/10.1002/nme.1893}.
\newblock \_eprint: https://onlinelibrary.wiley.com/doi/pdf/10.1002/nme.1893.

\bibitem{hesthaven_nodal_2008}
{\sc J.~S. Hesthaven and T.~Warburton}, {\em Nodal Discontinuous {Galerkin} Methods}, vol.~54 of Texts in Applied Mathematics, Springer, 2008, \url{https://doi.org/10.1007/978-0-387-72067-8}.

\bibitem{horvath_exactly_2020}
{\sc T.~L. Horváth and S.~Rhebergen}, {\em An exactly mass conserving space-time embedded-hybridized discontinuous {Galerkin} method for the {Navier}–{Stokes} equations on moving domains}, Journal of Computational Physics, 417 (2020), p.~109577, \url{https://doi.org/10.1016/j.jcp.2020.109577}.

\bibitem{horvath_conforming_2022}
{\sc T.~L. Horváth and S.~Rhebergen}, {\em A conforming sliding mesh technique for an embedded-hybridized discontinuous {Galerkin} discretization for fluid-rigid body interaction}, International Journal for Numerical Methods in Fluids, 94 (2022), pp.~1784--1809, \url{https://doi.org/10.1002/fld.5127}.
\newblock \_eprint: https://onlinelibrary.wiley.com/doi/pdf/10.1002/fld.5127.

\bibitem{houston_mixed_2009}
{\sc P.~Houston, D.~Schötzau, and X.~Wei}, {\em A mixed {DG} method for linearized incompressible magnetohydrodynamics}, Journal of Scientific Computing, 40 (2009), pp.~281--314, \url{https://doi.org/10.1007/s10915-008-9265-x}.

\bibitem{janhunen_positive_2000}
{\sc P.~Janhunen}, {\em A positive conservative method for magnetohydrodynamics based on {HLL} and roe methods}, Journal of Computational Physics, 160 (2000), pp.~649--661, \url{https://doi.org/10.1006/jcph.2000.6479}.

\bibitem{john_divergence_2017}
{\sc V.~John, A.~Linke, C.~Merdon, M.~Neilan, and L.~G. Rebholz}, {\em On the divergence constraint in mixed finite element methods for incompressible flows}, {SIAM} Review, 59 (2017), pp.~492--544, \url{https://doi.org/10.1137/15M1047696}.
\newblock Publisher: Society for Industrial and Applied Mathematics.

\bibitem{karakashian_nonconforming_1998}
{\sc O.~A. Karakashian and W.~N. Jureidini}, {\em A nonconforming finite element method for the stationary {Navier}-{Stokes} equations}, {SIAM} Journal on Numerical Analysis, 35 (1998), pp.~93--120.
\newblock Publisher: Society for Industrial and Applied Mathematics.

\bibitem{klein_simple_2013}
{\sc B.~Klein, F.~Kummer, and M.~Oberlack}, {\em A {SIMPLE} based discontinuous {Galerkin} solver for steady incompressible flows}, Journal of Computational Physics, 237 (2013), pp.~235--250, \url{https://doi.org/10.1016/j.jcp.2012.11.051}.

\bibitem{klingenberg_efficient_2017}
{\sc C.~Klingenberg, F.~Pörner, and Y.~Xia}, {\em An efficient implementation of the divergence free constraint in a discontinuous {Galerkin} method for magnetohydrodynamics on unstructured meshes}, Communications in Computational Physics, 21 (2017), pp.~423--442, \url{https://doi.org/10.4208/cicp.180515.230616a}.
\newblock Publisher: Cambridge University Press.

\bibitem{krause_mean-field_2016}
{\sc F.~Krause and K.-H. Rädler}, {\em Mean-field magnetohydrodynamics and dynamo theory}, Elsevier, 2016.

\bibitem{la_spina_superconvergent_2022}
{\sc A.~La~Spina and J.~Fish}, {\em A superconvergent hybridizable discontinuous {Galerkin} method for weakly compressible magnetohydrodynamics}, Computer Methods in Applied Mechanics and Engineering, 388 (2022), p.~114278, \url{https://doi.org/10.1016/j.cma.2021.114278}.

\bibitem{labeur_galerkin_2007}
{\sc R.~J. Labeur and G.~N. Wells}, {\em A {Galerkin} interface stabilisation method for the advection–diffusion and incompressible {Navier}–{Stokes} equations}, Computer Methods in Applied Mechanics and Engineering, 196 (2007), pp.~4985--5000, \url{https://doi.org/10.1016/j.cma.2007.06.025}.

\bibitem{labeur_energy_2012}
{\sc R.~J. Labeur and G.~N. Wells}, {\em Energy stable and momentum conserving hybrid finite element method for the incompressible {Navier}–{Stokes} equations}, {SIAM} Journal on Scientific Computing, 34 (2012), pp.~A889--A913, \url{https://doi.org/10.1137/100818583}.
\newblock Publisher: Society for Industrial and Applied Mathematics.

\bibitem{lederer_hybrid_2018}
{\sc P.~L. Lederer, C.~Lehrenfeld, and J.~Schöberl}, {\em Hybrid discontinuous {Galerkin} methods with relaxed h(div)-conformity for incompressible flows. part i}, {SIAM} Journal on Numerical Analysis, 56 (2018), pp.~2070--2094, \url{https://doi.org/10.1137/17M1138078}.
\newblock Publisher: Society for Industrial and Applied Mathematics.

\bibitem{lee_analysis_2019-1}
{\sc J.~J. Lee, S.~Shannon, T.~Bui-Thanh, and J.~N. Shadid}, {\em Analysis of an {HDG} method for linearized incompressible resistive {MHD} equations}, 2019, \url{https://doi.org/10.48550/arXiv.1702.05124}, \url{https://arxiv.org/abs/1702.05124 [math]}.

\bibitem{lee_analysis_2019}
{\sc J.~J. Lee, S.~J. Shannon, T.~Bui-Thanh, and J.~N. Shadid}, {\em Analysis of an {HDG} method for linearized incompressible resistive {MHD} equations}, {SIAM} Journal on Numerical Analysis, 57 (2019), pp.~1697--1722, \url{https://doi.org/10.1137/18M1166729}.
\newblock Publisher: Society for Industrial and Applied Mathematics.

\bibitem{lehrenfeld_hybrid_2010}
{\sc C.~Lehrenfeld}, {\em Hybrid discontinuous {Galerkin} methods for solving incompressible flow problems}, Rheinisch-Westfalischen Technischen Hochschule Aachen, 111 (2010).

\bibitem{lehrenfeld_high_2016}
{\sc C.~Lehrenfeld and J.~Schöberl}, {\em High order exactly divergence-free hybrid discontinuous {Galerkin} methods for unsteady incompressible flows}, Computer Methods in Applied Mechanics and Engineering, 307 (2016), pp.~339--361, \url{https://doi.org/10.1016/j.cma.2016.04.025}.

\bibitem{li_locally_2005}
{\sc F.~Li and C.-W. Shu}, {\em Locally divergence-free discontinuous {Galerkin} methods for {MHD} equations}, Journal of Scientific Computing, 22 (2005), pp.~413--442, \url{https://doi.org/10.1007/s10915-004-4146-4}.

\bibitem{linke_role_2014}
{\sc A.~Linke}, {\em On the role of the {Helmholtz} decomposition in mixed methods for incompressible flows and a new variational crime}, Computer Methods in Applied Mechanics and Engineering, 268 (2014), pp.~782--800, \url{https://doi.org/10.1016/j.cma.2013.10.011}.

\bibitem{linke_pressure-robustness_2016}
{\sc A.~Linke and C.~Merdon}, {\em Pressure-robustness and discrete {Helmholtz} projectors in mixed finite element methods for the incompressible {Navier}–{Stokes} equations}, Computer Methods in Applied Mechanics and Engineering, 311 (2016), pp.~304--326, \url{https://doi.org/10.1016/j.cma.2016.08.018}.

\bibitem{londrillo_high-order_2000}
{\sc P.~Londrillo and L.~D. Zanna}, {\em High-order upwind schemes for multidimensional magnetohydrodynamics}, The Astrophysical Journal, 530 (2000), p.~508, \url{https://doi.org/10.1086/308344}.
\newblock Publisher: {IOP} Publishing.

\bibitem{miyamoto_plasma_1980}
{\sc K.~Miyamoto}, {\em Plasma physics for nuclear fusion}, Cambridge,  (1980).

\bibitem{muralikrishnan_multilevel_2023}
{\sc S.~Muralikrishnan, S.~Shannon, T.~Bui-Thanh, and J.~N. Shadid}, {\em A multilevel block preconditioner for the {HDG} trace system applied to incompressible resistive {MHD}}, Computer Methods in Applied Mechanics and Engineering, 404 (2023), p.~115775, \url{https://doi.org/10.1016/j.cma.2022.115775}.

\bibitem{peters_divergence-conforming_2019}
{\sc E.~L. Peters and J.~A. Evans}, {\em A divergence-conforming hybridized discontinuous {Galerkin} method for the incompressible {Reynolds}-averaged {Navier}-{Stokes} equations}, International Journal for Numerical Methods in Fluids, 91 (2019), pp.~112--133, \url{https://doi.org/10.1002/fld.4745}.
\newblock \_eprint: https://onlinelibrary.wiley.com/doi/pdf/10.1002/fld.4745.

\bibitem{powell_solution-adaptive_1999}
{\sc K.~G. Powell, P.~L. Roe, T.~J. Linde, T.~I. Gombosi, and D.~L. De~Zeeuw}, {\em A solution-adaptive upwind scheme for ideal magnetohydrodynamics}, Journal of Computational Physics, 154 (1999), pp.~284--309, \url{https://doi.org/10.1006/jcph.1999.6299}.

\bibitem{qiu_mixed_2020}
{\sc W.~Qiu and K.~Shi}, {\em A mixed {DG} method and an {HDG} method for incompressible magnetohydrodynamics}, {IMA} Journal of Numerical Analysis, 40 (2020), pp.~1356--1389, \url{https://doi.org/10.1093/imanum/dry095}.

\bibitem{rhebergen_spacetime_2012}
{\sc S.~Rhebergen and B.~Cockburn}, {\em A space–time hybridizable discontinuous {Galerkin} method for incompressible flows on deforming domains}, Journal of Computational Physics, 231 (2012), pp.~4185--4204, \url{https://doi.org/10.1016/j.jcp.2012.02.011}.

\bibitem{rhebergen_analysis_2017}
{\sc S.~Rhebergen and G.~N. Wells}, {\em Analysis of a hybridized/interface stabilized finite element method for the {Stokes} equations}, {SIAM} Journal on Numerical Analysis, 55 (2017), pp.~1982--2003, \url{https://doi.org/10.1137/16M1083839}.
\newblock Publisher: Society for Industrial and Applied Mathematics.

\bibitem{rhebergen_hybridizable_2018}
{\sc S.~Rhebergen and G.~N. Wells}, {\em A hybridizable discontinuous {Galerkin} method for the {Navier}–{Stokes} equations with pointwise divergence-free velocity field}, Journal of Scientific Computing, 76 (2018), pp.~1484--1501, \url{https://doi.org/10.1007/s10915-018-0671-4}.

\bibitem{rhebergen_embeddedhybridized_2020}
{\sc S.~Rhebergen and G.~N. Wells}, {\em An embedded–hybridized discontinuous {Galerkin} finite element method for the {Stokes} equations}, Computer Methods in Applied Mechanics and Engineering, 358 (2020), p.~112619, \url{https://doi.org/10.1016/j.cma.2019.112619}.

\bibitem{shadid_towards_2010}
{\sc J.~N. Shadid, R.~P. Pawlowski, J.~W. Banks, L.~Chacón, P.~T. Lin, and R.~S. Tuminaro}, {\em Towards a scalable fully-implicit fully-coupled resistive {MHD} formulation with stabilized {FE} methods}, Journal of Computational Physics, 229 (2010), pp.~7649--7671, \url{https://doi.org/10.1016/j.jcp.2010.06.018}.

\bibitem{shadid_scalable_2016}
{\sc J.~N. Shadid, R.~P. Pawlowski, E.~C. Cyr, R.~S. Tuminaro, L.~Chacón, and P.~D. Weber}, {\em Scalable implicit incompressible resistive {MHD} with stabilized {FE} and fully-coupled newton–krylov-{AMG}}, Computer Methods in Applied Mechanics and Engineering, 304 (2016), pp.~1--25, \url{https://doi.org/10.1016/j.cma.2016.01.019}.

\bibitem{tabares_present_2015}
{\sc F.~L. Tabarés}, {\em Present status of liquid metal research for a fusion reactor}, Plasma Physics and Controlled Fusion, 58 (2015), p.~014014, \url{https://doi.org/10.1088/0741-3335/58/1/014014}.
\newblock Publisher: {IOP} Publishing.

\bibitem{toth_b0_2000}
{\sc G.~Tóth}, {\em The $\nabla$·b=0 constraint in shock-capturing magnetohydrodynamics codes}, Journal of Computational Physics, 161 (2000), pp.~605--652, \url{https://doi.org/10.1006/jcph.2000.6519}.

\bibitem{wang_new_2007}
{\sc J.~Wang and X.~Ye}, {\em New finite element methods in computational fluid dynamics by h(div) elements}, {SIAM} Journal on Numerical Analysis, 45 (2007), pp.~1269--1286, \url{https://doi.org/10.1137/060649227}.

\bibitem{wheeler_elliptic_1978}
{\sc M.~F. Wheeler}, {\em An elliptic collocation-finite element method with interior penalties}, {SIAM} Journal on Numerical Analysis, 15 (1978), pp.~152--161, \url{https://doi.org/10.1137/0715010}.
\newblock Publisher: Society for Industrial and Applied Mathematics.

\bibitem{yakovlev_locally_2013}
{\sc S.~Yakovlev, L.~Xu, and F.~Li}, {\em Locally divergence-free central discontinuous {Galerkin} methods for ideal {MHD} equations}, Journal of Computational Science, 4 (2013), pp.~80--91, \url{https://doi.org/10.1016/j.jocs.2012.05.002}.

\end{thebibliography}
\end{document}